\newtheorem{thm}{Theorem}[section]
\newtheorem{lem}[thm]{Lemma}
\newtheorem{prop}[thm]{Proposition}
\newtheorem{cor}[thm]{Corollary}
\newenvironment{remark}[1][Remark.]{\begin{trivlist}
\item[\hskip \labelsep {\bfseries #1}]}{\end{trivlist}}
\theoremstyle{definition}
\newtheorem{defn}[thm]{Definition}
\theoremstyle{definition}
\newtheorem{exmp}[thm]{Example}
\begin{document}

\nocite{*}

\title{Induced and Coinduced Modules over Cluster-Tilted Algebras}

\author{Ralf Schiffler}   
\address{Department of Mathematics, University of Connecticut, 
Storrs, CT 06269-3009, USA}
\email{schiffler@math.uconn.edu}
\author{Khrystyna Serhiyenko}\thanks{The authors were supported by the NSF CAREER grant DMS-1254567 and by the University of Connecticut.}
\address{Department of Mathematics, University of Connecticut, 
Storrs, CT 06269-3009, USA}
\email{khrystyna.nechyporenko@math.uconn.edu}

\maketitle

\begin{abstract}
We propose a new approach to study the relation between the module categories of a tilted algebra $C$ and the corresponding cluster-tilted algebra $B=C\ltimes E$. This new approach  consists of using  the induction functor $-\otimes_C B$ as well as the coinduction functor $D(B\otimes_C D-)$. 
 We show  that $DE$ is a partial tilting and a $\tau$-rigid $C$-module and that the induced module $DE\otimes_C B$ is a partial tilting and  a $\tau$-rigid $B$-module. Furthermore, if $C=\text{End}_A T$ for a tilting module $T$ over a hereditary algebra $A$, we compare the induction and coinduction functors to the Buan-Marsh-Reiten functor $\text{Hom}_{\mathcal{C}_A}(T,-)$ from the cluster-category of $A$  to the module category of $B$. We also study the question which $B$-modules are actually induced or coinduced from a module over a tilted algebra.
\end{abstract}

\section{Introduction}
Cluster-tilted algebras are finite dimensional associative algebras which were
introduced in \cite{BMR} and, independently, in \cite{CCS} for the type $\mathbb{A}$. 

One motivation for introducing these algebras came from Fomin and Zelevinsky's cluster algebras \cite{FZ}. To every cluster in an acyclic cluster algebra one can associate a cluster-tilted algebra, and the indecomposable rigid modules over the cluster-tilted algebra correspond bijectively to the cluster variables outside the chosen cluster. Generalizations of cluster-tilted algebras, the Jacobian algebras of quivers with potentials, were introduced in \cite{DWZ}, extending this correspondence to the non-acyclic types. 
Many people have studied cluster-tilted algebras in this context, see for example \cite{BBT,BMR, BMR2, BMR3, CCS2, CC, CK, KR}.

The second motivation came from classical tilting theory. Tilted algebras are the endomorphism algebras of tilting modules over hereditary algebras, whereas cluster-tilted algebras are the endomorphism algebras of cluster-tilting objects over  cluster categories of  hereditary algebras. This similarity in the two definitions lead to the following precise relation between tilted and cluster-tilted algebras, which was established in \cite{ABS}.

There is a surjective map
\[\xymatrix{ \{\textup{tilted algebras}\} \ar@{->>}[r] &\{\textup{cluster-tilted algebras}\} ,& C\ar@{|->}[r]&B=C\ltimes E,
}\]
where $E$ denotes the $C$-$C$-bimodule $E=\text{Ext}^2_C(DC,C)$ and $C\ltimes E$ is the trivial extension.

This result allows one to define cluster-tilted algebras without using the cluster category.
It is natural to ask how the module categories of $C$ and $B$ are related, and several results in this direction have been obtained, see for example \cite{ABS2,ABS3,ABS4, BFPPT,BOW,DS}. 

The Hochschild cohomology of the algebras $C$ and $B$ has been compared in \cite{AR,ARS,ABIS,L}.

\bigskip

In this paper, we use a new approach to study the relation between the module categories of a tilted algebra $C$ and its cluster-tilted algebra $B=C\ltimes E$, namely \emph{induction} and \emph{coinduction}.

The induction functor
$-\otimes_C B$ and the coinduction functor $\text{Hom}_C(B,-)$ from $\textup{mod}\,C$ to $\textup{mod}\,B$ are defined whenever $C$ is a subring of $B$ which has the same identity. If we are dealing with algebras over a field $k$, we can, and usually do, write the coinduction functor as $D(B\otimes_CD-)$, where $D=\text{Hom}(-,k)$ is the standard duality.

Induction and coinduction are important tools in classical Representation Theory of Finite Groups. In this case, $B$ would be the group algebra of a finite group $G$ and $C$ the group algebra of a subgroup of $G$ (over a field whose characteristic is not dividing the group orders). In this situation, the algebras are semi-simple, induction and coinduction are the same functor, and this functor is exact.

For arbitrary rings, and even for finite dimensional algebras, the situation is not that simple. In general, induction and coinduction are not the same functor and, since the $C$-module $B$ is not projective (and not flat), induction and coinduction are not exact functors.

However, the connection between tilted algebras and cluster-tilted algebras is close enough so that induction and coinduction are interesting tools for the study of the relation between the module categories. 

Induction and coinduction have been studied for split extension algebras in \cite{AM,AZ}, and we apply some of their results to our situation.

\smallskip

Our first main result is on the $C$-$C$-bimodule $E=\text{Ext}_C^2(DC,C)$ considering its
right $C$-module structure $E_C$ as well as its left $C$-module structure ${}_CE$ but as a right $C$-module $D({}_CE)$. We show the following.

\begin{thm}\label{thm 1}
If $C$ is a tilted algebra and $B$ is the corresponding cluster-tilted algebra, then \\
\indent{\upshape{(a)}}  $DE$ is a  (classical) partial tilting and $\tau_C$-rigid $C$-module, and its corresponding induced module $DE\otimes B$ is a (classical) partial tilting and $\tau_B$-rigid $B$-module.\\
\indent{\upshape{(b)}} $E$ is a partial cotilting and $\tau_C$-corigid $C$-module, and its corresponding coinduced module $D(B\otimes DE)$ is a partial cotilting and $\tau_B$-corigid $B$-module.  
\end{thm}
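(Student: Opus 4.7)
The plan is to prove part~(a) in full and then deduce part~(b) by applying part~(a) to the opposite algebra. Indeed $C^{op}$ is again a tilted algebra, and its corresponding cluster-tilted algebra is $B^{op} = C^{op}\ltimes E^{op}$, where $E^{op}$ identifies with $\text{Ext}^2_{C^{op}}(DC^{op},C^{op})$. The standard $k$-duality sends classical partial tilting to partial cotilting, $\tau$-rigid to $\tau$-corigid, and interchanges induction with coinduction. Under this translation the right $C$-module $E_C$ corresponds to the $DE$ constructed from $(C^{op},B^{op})$, so part~(b) for $(C,B)$ becomes part~(a) for $(C^{op},B^{op})$.

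For the $C$-module claims in part~(a), the key input is that a tilted algebra has global dimension at most two. Hence $DC$ admits a projective resolution of length at most two; applying $\text{Hom}_C(-,C)$ and invoking the Auslander--Reiten formula gives a description of $DE = D\text{Ext}^2_C(DC,C)$ in terms of $\tau_C$ and a syzygy of $DC$, from which one reads off $\text{proj.dim}_C DE \le 1$. For the vanishing $\text{Ext}^1_C(DE,DE)=0$, I would identify $DE$ through the cluster category of $A$: the bimodule $E$ can be described via $\text{Hom}$-spaces in $\mathcal{C}_A$ involving $T$ and $T[1]$, and the rigidity of the cluster-tilting object $T$ combined with the properties of the BMR functor should translate cluster-category rigidity into the required self-Ext vanishing on the module side. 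Once $DE$ is partial tilting, $\tau_C$-rigidity follows automatically, since for modules of projective dimension at most one the Auslander--Reiten formula makes $\tau_C$-rigidity equivalent to self-Ext vanishing.

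To transfer these properties to the induced module $DE\otimes_C B$, I would apply the results of Assem--Marmaridis and Assem--Zacharia~\cite{AM,AZ} on induction along a split extension. For $B = C\ltimes E$, their results give sufficient conditions under which $-\otimes_C B$ preserves projective dimension at most one, self-Ext vanishing, and $\tau$-rigidity; the conditions are phrased as vanishing of certain $\text{Tor}^C$-groups involving $DE$ and $E$. These vanishings should then be checked using the partial tilting structure of $DE$ over $C$ established above together with the structure of $E$ as a $C$-module on both sides.

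The main obstacle I anticipate is the verification of these $\text{Tor}$-vanishings. Although $DE$ and $E$ both admit reasonably explicit descriptions through the cluster category and the global-dimension-two constraint, extracting exactly the vanishings needed to invoke~\cite{AM,AZ} is not a formal manipulation: it uses in an essential way how $E$ interacts as a bimodule with the partial tilting module $DE$, and seems to require the specific features of the tilted--cluster-tilted correspondence rather than following from general split-extension formalism.
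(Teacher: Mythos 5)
Your overall plan---establish the partial tilting/$\tau$-rigid properties of $DE$ over $C$ first, then transfer to $DE\otimes_C B$ using the Assem--Marmaridis criterion, and dispose of (b) by duality---is the right shape, and the reduction of (b) to (a) via opposite algebras is a clean observation that the paper doesn't make explicit. However, the proposal has several concrete gaps and inaccuracies in the substantive steps.

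First, the claim that $\text{pd}_C DE \le 1$ can be ``read off'' from $DE\cong\tau_C\Omega_C DC$ is not correct: that isomorphism (Proposition~\ref{4.1}(b)) holds for any algebra of global dimension $\le 2$, and the projective dimension of $\tau_C$ of a module is not directly controlled by syzygy data. The paper's argument goes through the tilted dichotomy of Proposition~\ref{2.2}(b): Lemma~\ref{4.2}(b) shows every nonzero summand of $DE$ has $\textup{id}_C=2$, hence by the dichotomy $\textup{pd}_C\le 1$. This use of the tilted hypothesis is essential and is precisely where your sketch elides the difficulty. Second, for $\text{Ext}^1_C(DE,DE)=0$ you propose routing through the cluster category via the BMR functor and rigidity of $T$; the paper instead gives a short, elementary homological argument (Lemma~\ref{4.6}) using only $\text{gl.dim}\,C\le 2$: from $\text{Ext}^1_C(C,C)=0$ one shows every map $C\to\Omega^{-1}C$ factors through an injective, and then bootstraps from $\text{Ext}^1_C(E,C)=0$ to $\text{Ext}^1_C(E,E)=0$. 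You haven't actually carried out your cluster-category route, and the paper's method is both simpler and more general.

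Third, the Assem--Marmaridis criterion (Theorem~\ref{3.9}) imposes \emph{Hom}-vanishings, not Tor-vanishings: one needs $\text{Hom}_C(DE\otimes_C E,\tau_C DE)=0$ and $\text{Hom}_C(DE,\tau_C DE)=0$. The second is $\tau_C$-rigidity of $DE$, already in hand; the first is the genuine obstacle, which you correctly flag but do not resolve. The paper resolves it with the torsion pair $(\mathcal{X}(T),\mathcal{Y}(T))$: Lemma~\ref{4.2} shows $\textup{pd}_C DE\otimes E=2$ (so $DE\otimes E\in\mathcal{X}(T)$) and $\textup{id}_C DE=2$ (so $DE\in\mathcal{Y}(T)$); since $\mathcal{Y}(T)$ is closed under predecessors, $\tau_C DE\in\mathcal{Y}(T)$, and there are no nonzero maps from $\mathcal{X}(T)$ to $\mathcal{Y}(T)$. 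This is the key idea your plan is missing. Finally, Theorem~\ref{3.9} only gives that $DE\otimes_C B$ is partial tilting; the assertion that it is $\tau_B$-rigid requires a separate argument, which your proposal does not address. The paper handles it directly by computing $\tau_B(DE\otimes B)\cong\tau_C DE$ (via Lemma~\ref{3.8} and Proposition~\ref{4.3}(b)) and then factoring an arbitrary map $DE\otimes B\to\tau_C DE$ through the short exact sequence of Proposition~\ref{3.6}(a), showing both components vanish.
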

We think it would be an interesting problem to study the possible completions of these partial (co)-tilting modules and their endomorphism algebras.

\smallskip

Our second main result compares induction and coinduction with the well-known equivalence of categories
$\textup{Hom}_{\mathcal{C}_A}(T,-):\mathcal{C}_A/ \textup{add}\,\tau T \rightarrow \textup{mod}\,B$
of Buan, Marsh and Reiten \cite{BMR}.  Here $\mathcal{C}_A$ denotes the cluster category. Recall that mod$\,A$ naturally embeds in $\mathcal{C}_A$. We show that induction commutes with this equivalence on the subcategory $\mathcal{T}(T)$ given by the torsion subcategory determined by the tilting module $T$,
\[\mathcal{T}(T)=\{M\in\textup{mod}\,A\mid \textup{Ext}_A^1(T,M)=0\}.\]
We also show a similar statement for the coinduction functor using the torsion free subcategory $\mathcal{F}(T)$ determined by $T$,
\[\mathcal{F}(T)=\{M\in\textup{mod}\,A\mid \textup{Hom}_A(T,N)=0\}.\]
We show the following.

\begin{thm} \label{thm 3} Let $C$ be a tilted algebra and $B$ the corresponding cluster-tilted algebra. Then
\[\begin{array}{rcl}
 \textup{Hom}_A(T,M)\otimes_C B& \cong& \textup{Hom}_{\mathcal{C}_A}(T,M)$, for every $M\in \mathcal{T}(T),\\
D(B\otimes_C D\textup{Ext}_A^1(T,M))&\cong &\textup{Ext}_{\mathcal{C}_A}^1(T,M)$, for every $M\in \mathcal{F}(T).
\end{array}
\]
\end{thm}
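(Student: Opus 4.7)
The plan is to construct an explicit natural morphism between the two sides of each identity and verify it is an isomorphism, first on $\text{add}\,T$ (resp.\ $\text{add}\,\tau T$) and then on $\mathcal{T}(T)$ (resp.\ $\mathcal{F}(T)$) by reducing via Brenner--Butler tilting theory. For part (a), define
\[\eta_M:\text{Hom}_A(T,M)\otimes_C B\longrightarrow \text{Hom}_{\mathcal{C}_A}(T,M),\qquad f\otimes b\mapsto f\circ b,\]
where $f:T\to M$ is regarded as a morphism in $\mathcal{C}_A$ and $b\in B=\text{End}_{\mathcal{C}_A}(T)$ is composed on the right. The assignment is $C$-balanced since $C=\text{End}_A(T)$ embeds in $B$, and is natural in $M$. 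When $M=T_i$ is an indecomposable summand of $T$, both sides equal the indecomposable projective $B$-module $e_iB$ and $\eta_{T_i}$ is the canonical identification, so $\eta$ is an isomorphism throughout $\text{add}\,T$ by additivity.

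To extend to $\mathcal{T}(T)$, I would pick a projective presentation $P_1\to P_0\to \text{Hom}_A(T,M)\to 0$ in $\text{mod}\,C$. Since the classical tilting theorem gives $\text{pd}_{C^{\text{op}}}T\leq 1$ and $\text{Hom}_A(T,M)\in\mathcal{Y}(T)$, tensoring with $T$ over $C$ yields an exact sequence $T_1\to T_0\to M\to 0$ in $\text{mod}\,A$ with $T_i\in\text{add}\,T$. Applying the induction functor and using the base case identifies $\text{Hom}_A(T,M)\otimes_C B$ with the cokernel of the map $\text{Hom}_{\mathcal{C}_A}(T,T_1)\to\text{Hom}_{\mathcal{C}_A}(T,T_0)$. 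The key step is to match this with $\text{Hom}_{\mathcal{C}_A}(T,M)$: decompose the sequence into the short exact sequences $0\to K\to T_0\to M\to 0$ and $T_1\twoheadrightarrow K$ with $K=\ker(T_0\to M)$, pass to the associated triangles in $\mathcal{C}_A$, and chase the long exact sequences obtained from $\text{Hom}_{\mathcal{C}_A}(T,-)$. The vanishings $\text{Ext}^j_{\mathcal{C}_A}(T,T_i)=0$ for $j\geq 1$ (since $T$ is cluster-tilting), combined with the $2$-Calabi--Yau structure of $\mathcal{C}_A$ and the hypothesis $M\in\mathcal{T}(T)$ (which controls the auxiliary $\text{Ext}$-groups through the Brenner--Butler correspondence), should produce the required right-exactness; a Five Lemma argument on the naturality square of $\eta$ then shows that $\eta_M$ is an isomorphism.

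Part (b) follows by a dual argument. Construct a natural morphism $D(B\otimes_C D\text{Ext}^1_A(T,M))\to \text{Ext}^1_{\mathcal{C}_A}(T,M)$, verify it on $\text{add}\,\tau T$ (where $\text{Ext}^1_A(T,-)$ carries the indecomposable non-projective summands $\tau T_i$ to indecomposable injective $C$-modules, and the coinduced module is identified with an indecomposable injective $B$-module), and extend to $\mathcal{F}(T)$ via an injective copresentation of $\text{Ext}^1_A(T,M)$ in $\text{mod}\,C$, dualizing the cokernel chase above. The main obstacle in both parts is controlling the auxiliary $\text{Ext}^1_{\mathcal{C}_A}$-groups of the intermediate kernels that appear when a module-theoretic resolution is transported into triangles of $\mathcal{C}_A$; the interplay between the tilted side (abelian, governed by Brenner--Butler) and the cluster-tilted side (triangulated, governed by the $2$-Calabi--Yau property together with $T$ being cluster-tilting) is precisely what makes the required vanishings line up.
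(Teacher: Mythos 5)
Your overall strategy is the same as the paper's: identify the two sides on projective $B$-modules (equivalently on $\text{add}\,T$, respectively on $\text{add}\,\tau T$), and then extend by presenting $\text{Hom}_A(T,M)$ by projectives (respectively $\text{Ext}^1_A(T,M)$ by injectives) and comparing cokernels (respectively kernels) via the Five Lemma. Making the comparison map explicit as a natural transformation $\eta_M(f\otimes b) = f\circ b$ is a nice way to organize the Five Lemma step, and it is what the paper's commutative diagram encodes implicitly.

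However, there is a genuine gap in the step you flag as the "key step." You take an arbitrary projective presentation $P_1\to P_0\to\text{Hom}_A(T,M)\to 0$, obtain $T_1\to T_0\to M\to 0$ in $\text{mod}\,A$, introduce the kernel $K=\ker(T_0\to M)$, and want right-exactness of $\text{Hom}_{\mathcal{C}_A}(T,-)$ on the resulting triangles. That requires $\text{Ext}^1_{\mathcal{C}_A}(T,K)=0$, and since $\text{Ext}^1_{\mathcal{C}_A}(T,K)\cong \text{Ext}^1_A(T,K)\oplus \text{Hom}_A(T,\tau K)$, the hypothesis $M\in\mathcal{T}(T)$ only kills the first summand (because $K$, as a quotient of $T_1$, lies in $\mathcal{T}(T)$). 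The second summand $\text{Hom}_A(T,\tau K)$ does \emph{not} vanish for a general module $K\in\mathcal{T}(T)$; neither $2$-Calabi--Yau duality nor Brenner--Butler gives it to you for free. The paper avoids this by invoking Proposition \ref{2.2}(d): since $\text{Hom}_A(T,M)\in\mathcal{Y}(T)$ one has $\text{pd}_C\,\text{Hom}_A(T,M)\leq 1$, so a \emph{minimal} projective presentation is already a length-one resolution, hence $T_1\to T_0$ is injective and $K=T_1\in\text{add}\,T$. Then $\text{Ext}^1_{\mathcal{C}_A}(T,T_1)=0$ simply because $T$ is cluster-tilting, with no reference to 2-CY. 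The dual remark applies to part (b): you need $\text{id}_C\,\text{Ext}^1_A(T,M)\leq 1$ (Proposition \ref{2.2}(c), since $\text{Ext}^1_A(T,M)\in\mathcal{X}(T)$) so that the injective copresentation stops after two terms and the "cokernel" sits in $\text{add}\,\tau T$. With that ingredient inserted, your argument closes and coincides with the paper's; without it, the Ext-vanishing you appeal to is not justified.
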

Let us point out that these formulas implicitly use the tilting theorem of Brenner and Butler \cite{BB}.

\smallskip

Finally, we address the question which $B$-modules are actually induced or coinduced from a tilted algebra. We obtain the following results.
\begin{thm}\label{thm 4}
 Let $B$ be a cluster-tilted algebra.
 \begin{itemize}
\item[{\rm (a)}] If $B$ is of finite representation type, then \emph{every} $B$-module is induced and coinduced from some tilted algebra. 
\item[{\rm (b)}] If $B$ is of arbitrary representation type, then every \emph{transjective} $B$-module is induced or coinduced from some tilted algebra.
\item[{\rm (c)}] If $B$ is cluster concealed, then \emph{every} $B$-module is induced or coinduced from some tilted algebra.
\item[{\rm (d)}] If $B$ is of tame representation type and such that there are no morphisms between indecomposable projective non-transjective modules, then  \emph{every} indecomposable $B$-module is induced or coinduced from some tilted algebra.
\end{itemize}
\end{thm}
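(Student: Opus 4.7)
The strategy is to combine Theorem~\ref{thm 3} with the fact that, by the Assem--Br\"ustle--Schiffler construction, \emph{any} cluster-tilting object $\tilde T$ in a cluster category $\mathcal{C}_A$ with $\text{End}_{\mathcal{C}_A}(\tilde T)^{\textup{op}}=B$ can be realized as a tilting module over some hereditary algebra $A'$ derived equivalent to $A$, giving rise to a tilted algebra $C'=\text{End}_{A'}\tilde T$ with $C'\ltimes E'=B$. By Theorem~\ref{thm 3}, if an indecomposable $B$-module $M$ lifts via the Buan--Marsh--Reiten equivalence to an indecomposable object $M'\in\mathcal{C}_A$, then $M$ is induced from $C'$ whenever $M'\in\mathcal{T}(\tilde T)$, and coinduced from $C'$ whenever $M'\in\mathcal{F}(\tilde T)$. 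All four parts thus reduce to the following combinatorial question: given $M'\in\mathcal{C}_A$, can one find a cluster-tilting object $\tilde T$ (equivalently a slice in the Auslander--Reiten quiver of $\mathcal{C}_A$) such that $M'$ lies in $\mathcal{T}(\tilde T)$ or in $\mathcal{F}(\tilde T)$?

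For parts (a)--(c), the plan is to exploit the fact that the transjective component of $\mathcal{C}_A$ has the form $\mathbb{Z}Q^{\textup{op}}$, inside of which every indecomposable admits a complete slice to its ``left'' (putting the object in a torsion class) and a complete slice to its ``right'' (putting the object in a torsion-free class). These slices correspond to cluster-tilting objects after possibly adjoining a full set of non-transjective indecomposable summands, and the adjunction is harmless because non-transjective summands lie in both $\mathcal{T}$ and $\mathcal{F}$ of any slice built from the transjective component. For transjective $M'$ this yields part (b) directly. In the Dynkin case of part (a), all of $\mathcal{C}_A$ is transjective and both a left and a right slice exist for every $M'$, giving simultaneous induction and coinduction. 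For part (c), when $B$ is cluster concealed, $A$ is non-Dynkin hereditary and $\tilde T$ may be chosen preprojective; combining the same slice argument with the classification of indecomposable modules over cluster concealed algebras shows that every indecomposable $B$-module can be placed in the torsion or torsion-free class of some tilting module over a hereditary algebra derived equivalent to $A$.

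Part (d) is where the main obstacle lies. Here $A$ is tame hereditary and the non-transjective indecomposables of $\mathcal{C}_A$ form tubes, in which a regular object in general need not lie in $\mathcal{T}$ or $\mathcal{F}$ of any tilting module, because tubes are not of the form $\mathbb{Z}Q^{\textup{op}}$ and do not admit complete slices in the same sense. The plan is to use the hypothesis that there are no morphisms between indecomposable projective non-transjective $B$-modules to show that each tube of $\mathcal{C}_A$ interacts with $\tilde T$ in a sufficiently decoupled way that every indecomposable regular $M'$ can be placed in $\mathcal{T}$ or $\mathcal{F}$ of a tilting module obtained by rotating a piece of a chosen transjective slice into that tube. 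This decoupling is exactly what breaks down for generic tame cluster-tilted algebras, where tubes of rank $\ge 2$ can contain regular modules that resist both induction and coinduction from any tilted algebra; the hypothesis on projectives is thus essential and the bulk of the technical work for (d) consists of analyzing how the indecomposable projective $B$-modules sit with respect to the tubes and verifying that the aforementioned slice rotation produces an honest tilting module over some hereditary algebra derived equivalent to $A$.
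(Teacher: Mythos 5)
Your high-level reduction for parts (a)--(c) is broadly aligned with the paper's: you want to place the lift $M'\in\mathcal{C}_A$ into a torsion or torsion-free class associated with some cluster-tilting object, and then invoke Theorem~\ref{thm 3}. The paper implements this via local slices (Theorems~\ref{2.8}, \ref{2.9} and Proposition~\ref{2.6}) and Proposition~\ref{4.3}, and along the way it has to handle two technical points that your sketch glosses over: the subquiver $\Sigma(\rightarrow M)$ you would construct need not project to a local slice in $\textup{mod}\,B$ if it contains a summand of $\tau_{\mathcal{C}_A}T$ (the paper resolves this by passing to $\Sigma(T_i\rightarrow)$ and a $\tau$-shift of $M$), and Ringel's criterion requires sincerity, which the paper arranges by shifting $T$ far enough so that all relevant preprojectives are sincere. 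Your remark that ``the adjunction is harmless because non-transjective summands lie in both $\mathcal{T}$ and $\mathcal{F}$ of any slice built from the transjective component'' does not substitute for these checks. Still, for (a)--(c) the gaps are ones of care, not of concept.

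Part (d) is where your plan genuinely breaks down. You propose to place each regular $M'$ in $\mathcal{T}$ or $\mathcal{F}$ of a tilting module obtained by ``rotating a piece of a chosen transjective slice into that tube.'' Tubes are not of the form $\mathbb{Z}Q^{\textup{op}}$, do not carry complete slices, and one cannot rotate a transjective slice into a tube and still obtain a tilting module; moreover, the cluster-tilting object defining $B$ is fixed up to mutation, so this move is not available. The paper's argument for Theorem~\ref{7.3} is of a completely different nature: it fixes a single tilted algebra $C=\textup{End}_A T$ (with $T$ preinjective-free), observes via the hypothesis on projectives that the summands $T_i$ in the relevant tube $\mathcal{S}_A$ lie on the mouth, concludes that $\mathcal{S}_A\cap\mathcal{T}(T)$ consists of exactly $r-\ell$ corays (hence these corays of $\mathcal{S}_B$ are induced via Theorem~\ref{thm 3}), and then shows by an explicit computation with $\textup{rad}\,P_B(i)$, $\textup{rad}\,P_C(i)$, and the simplicity of $I_C(i)$ that the remaining $\ell$ corays of $\mathcal{S}_B$ coincide with corays of $\mathcal{S}_C$ and are therefore coinduced (the coinduction functor being the identity on $\mathcal{S}_C$, as these modules have projective dimension at most one). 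The hypothesis on projectives is used precisely to force the $T_i$ onto the mouth and to make $I_C(i)$ simple; your sketch names the hypothesis as ``essential'' but does not say how it enters. Without the coray-by-coray analysis inside a single fixed $C$, there is no route to (d), and your proposed slice-rotation step is a genuine missing idea.
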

We remark that for general cluster-tilted algebras, there are indecomposable modules that are not induced and not coinduced.   Example \ref{ex 7.7} depicts a cluster-tilted algebra with infinitely many such modules, all lying in the same tube. We think it would be interesting to study the structure of these modules. 

\smallskip

Using induction and coinduction functors we also constructed explicit injective presentations for the induced modules in cluster-tilted algebras in \cite{SS}.

The paper is organized as follows. We start by recalling some background on tilting theory in section \ref{sect:basics} and reviewing general results on induction and coinduction in section~\ref{sect 3}.
In section \ref{sect 4}, we study induction and coinduction from tilted to cluster-tilted algebras and prove Theorem \ref{thm 1}. The relation to cluster categories and the proof of Theorem \ref{thm 3} are contained in section \ref{sect 6}.   In section \ref{sect 7}, we study the question which $B$-modules are induced or coinduced from a tilted algebra and prove Theorem \ref{thm 4}.  In particular, part (a) follows from Theorem \ref{7.0}, part (b) follows from Theorem \ref{7.1}, part (c) follows from Theorem \ref{7.2}, and pard (d) follows from part (b) and Theorem \ref{7.3}.

\section{Notation and Preliminaries}
\label{sect:basics}
Throughout this paper all algebras are assumed to be basic, finite dimensional over an algebraically closed field $k$.  Suppose $Q=(Q_0, Q_1)$ is a connected quiver without oriented cycles.  By $kQ$ we denote the path algebra of $Q$.  If $\Lambda$ is a $k$-algebra then denote by mod$\,\Lambda$  the category of finitely generated right $\Lambda$-modules and by ind$\,\Lambda$ a set of representatives of each isoclass of indecomposable right $\Lambda$-modules.  Given $M \in $ mod$\,\Lambda$, the projective dimension of $M$ in mod$\,\Lambda$ is denoted by pd$_\Lambda M$ and its injective dimension by id$_\Lambda M$. Let $\tau$ be the Auslander-Reiten translation.  Also define $\Omega M$ to be the first syzygy and $\Omega^{-1} M$ the first cosyzygy of $M$.  By $D$ we understand the standard duality functor Hom$_k(-,k)$.  Finally, let $\nu=D\text{Hom}_\Lambda(-,\Lambda)$ be the Nakayama functor and $\nu ^{-1}=\text{Hom}_\Lambda(D\Lambda,-)$ be the inverse Nakayama functor. For further details on representation theory we refer to \cite{ASS, S}. 

\subsection{Tilted algebras}
 We recall the classical definition of a tilting module.

\begin{defn}\label{def 1}
Let $\Lambda$ be an algebra.  A $\Lambda$-module $T$ is called a \emph{partial tilting module} if the following two conditions are satisfied:
\begin{itemize}
\item[-] pd$\,T_{\Lambda}\leq 1$,
\item[-] $\text{Ext}^1_{\Lambda}(T,T)=0$.
\end{itemize}
A partial tilting module $T$ is called a \emph{tilting module} if it also satisfied the following additional condition: 
\begin{itemize}
\item[-] There exists a short exact sequence $0\to \Lambda_{\Lambda}\to T'_{\Lambda} \to T''_{\Lambda}\to 0$ with $T', T'' \in \text{add}\,T$. 
\end{itemize}
\end{defn}

Let $A$ be a hereditary algebra and let $T$ be a basic tilting $A$-module.  It is well known that in this case, $T$ is tilting if Ext$_A^1 (T,T)=0$ and the number of indecomposable direct summands of $T$ equals the number of isomorphism classes of simple $A$-modules. The corresponding algebra $C=\text{End}_A T$ is called a \emph{tilted algebra}.  Consider the following full subcategories of $\textup{mod}\,A$.
$$ \mathcal{T}(T)=\{ M\in \text{mod}\,A \mid\text{Ext}_A^1(T,M)=0\}$$ 
$$ \mathcal{F}(T)=\{M\in \text{mod}\,A \mid\text{Hom}_A(T,M)=0\}$$
  Then $(\mathcal{T}(T), \mathcal{F}(T))$ is a torsion pair in mod$\,A$ that determines another torsion pair $(\mathcal{X}(T), \mathcal{Y}(T))$ in $\textup{mod}\,C$, where 
$$\mathcal{X}(T)=\{ N\in \text{mod}\,C\mid  N\otimes _C T =0\}$$
$$\mathcal{Y}(T)=\{N\in \text{mod}\,C \mid\text{Tor}_1^C (N, T)=0\}.$$
For more details refer to \cite[chapters VI, and VIII]{ASS}.  With this notation consider the following theorem due to Brenner and Butler, see \cite{BB} for details. 

\begin{thm} \label{2.1} Let $A$ be a hereditary algebra, $T$ a tilting $A$-module, and {\upshape{$C=\text{End}_A T$}}. Then \\
\indent {\upshape{(a)}}  the functors {\upshape{Hom}}$_A(T,-)$ and $-\otimes_C T$ induce quasi-inverse equivalences between $\mathcal{T}(T)$ and $\mathcal{Y}(T)$;\\
\indent {\upshape{(b)}} the functors {\upshape{Ext}}$_A^1(T, -)$ and {\upshape{Tor}}$_1^C (-,T)$ induce quasi-inverse equivalences between $\mathcal{F}(T)$ and $\mathcal{X}(T)$.  \\
Moreover, for any {\upshape $M\in\text{mod}\,A$} we have {\upshape $\text{Tor}_1^C(\text{Hom}_A(T,M),T)=0$} and {\upshape $\text{Ext}_A^1(T,M)\otimes _C T=0$}, and for any {\upshape $N\in\text{mod}\,C$} we have {\upshape $\text{Hom}_A(T,\text{Tor}_1^C (N,T))=0$} and {\upshape $\text{Ext}^1_A(T,N\otimes_C T)=0$}.   
\end{thm}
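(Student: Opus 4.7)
The plan is to exploit the standard adjunction between the tensor functor $-\otimes_C T$ and the Hom functor $\textup{Hom}_A(T,-)$, whose counit $\epsilon_M\colon \textup{Hom}_A(T,M)\otimes_C T \to M$ and unit $\eta_N\colon N \to \textup{Hom}_A(T,N\otimes_C T)$ are the maps we must show restrict to isomorphisms on $\mathcal{T}(T)$ and $\mathcal{Y}(T)$ respectively. The base cases are easy: for $T_i\in\textup{add}\,T$ the counit $\epsilon_{T_i}$ is an isomorphism because $\textup{Hom}_A(T,T)\otimes_C T\cong C\otimes_C T\cong T$ and both functors commute with finite direct sums, and dually $\eta_N$ is an isomorphism for any projective $N$ since every projective $C$-module has the form $\textup{Hom}_A(T,T_i)$.

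For part (a), I would exploit that every $M\in\mathcal{T}(T)$ admits a short exact sequence $0\to T_1\to T_0\to M\to 0$ with $T_0,T_1\in\textup{add}\,T$. Indeed, $T$ generates $\mathcal{T}(T)$, so one obtains a surjection $T_0\twoheadrightarrow M$; the hereditariness of $A$ combined with $M\in\mathcal{T}(T)$ then forces the kernel to remain in $\textup{add}\,T$. Applying $\textup{Hom}_A(T,-)$ and using $\textup{Ext}^1_A(T,T_1)=0$ produces a short exact sequence in $\textup{mod}\,C$ which, by the base case, is a projective resolution of $\textup{Hom}_A(T,M)$. Tensoring back with $T$ and comparing with the original sequence via naturality of $\epsilon$ and the five lemma simultaneously delivers $\textup{Tor}^C_1(\textup{Hom}_A(T,M),T)=0$ and $\epsilon_M$ an isomorphism. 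Running the same reasoning in reverse from a minimal projective presentation of $N\in\mathcal{Y}(T)$, where the hypothesis $\textup{Tor}^C_1(N,T)=0$ preserves exactness under $-\otimes_C T$, yields $N\otimes_C T\in\mathcal{T}(T)$ and $\eta_N$ an isomorphism.

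Part (b) follows by a dual argument, replacing the $\textup{add}\,T$-resolution of $M\in\mathcal{T}(T)$ by an injective copresentation of $M\in\mathcal{F}(T)$ and swapping the roles of Ext and Tor throughout; alternatively it drops out formally by passing to the bounded derived category and identifying $\textup{Ext}^1_A(T,-)$ and $\textup{Tor}^C_1(-,T)$ with the first cohomology of the derived tilting equivalences $\mathbf{R}\textup{Hom}_A(T,-)$ and $-\otimes^{\mathbf{L}}_C T$. The four cross-vanishings at the end then come for free: the equivalences of (a) and (b) target the respective subcategories, so $\textup{Hom}_A(T,-)$ lands in $\mathcal{Y}(T)$, $\textup{Ext}^1_A(T,-)$ in $\mathcal{X}(T)$, $-\otimes_C T$ in $\mathcal{T}(T)$, and $\textup{Tor}^C_1(-,T)$ in $\mathcal{F}(T)$. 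The main obstacle I anticipate is checking that the $\textup{add}\,T$-resolution of $M\in\mathcal{T}(T)$ exists and that its image under $\textup{Hom}_A(T,-)$ is genuinely a projective resolution in $\textup{mod}\,C$; this is the one place where the hereditariness of $A$ is used essentially, and it must be handled carefully before the five-lemma machinery can fire.
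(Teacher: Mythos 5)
First, a contextual note: the paper does not prove Theorem~\ref{2.1}; it is the classical tilting theorem of Brenner and Butler, and the paper simply records it with the remark ``see \cite{BB} for details.'' There is therefore no in-paper argument to compare against, so what follows assesses your proposal on its own terms. Structurally your plan is the standard textbook route (adjunction, base cases on $\text{add}\,T$ and $\text{proj}\,C$, then $\text{add}\,T$-resolutions and a five-lemma diagram chase), and the overall shape is right; you also correctly flag that the existence of the $\text{add}\,T$-resolution is the one genuinely delicate step.

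However, that step is stated incorrectly, and the error would propagate. It is not true that for an arbitrary surjection $T_0\twoheadrightarrow M$ with $T_0\in\text{add}\,T$ and $M\in\mathcal{T}(T)$, hereditariness forces $\ker\subseteq\text{add}\,T$. Already over $A=k(1\to 2)$ with $T=P(1)\oplus I(1)$ one has $I(1)\in\mathcal{T}(T)$, yet the surjection $P(1)\twoheadrightarrow I(1)$ has kernel $P(2)$, and $\text{Ext}^1_A\bigl(I(1),P(2)\bigr)\ne 0$, so $P(2)\notin\mathcal{T}(T)\supseteq\text{add}\,T$. What you need is a \emph{right $\text{add}\,T$-approximation} $T_0\to M$ (automatically surjective since $\mathcal{T}(T)=\text{Gen}\,T$). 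Then two independent facts combine: the approximation property forces $\text{Ext}^1_A(T,K)=0$, i.e.\ $K\in\mathcal{T}(T)$; and hereditariness gives $\text{Ext}^2_A(M,-)=0$, so the long exact sequence for $\text{Hom}_A(-,Y)$ with $Y\in\mathcal{T}(T)$ and $\text{Ext}^1_A(T_0,Y)=0$ yields $\text{Ext}^1_A(K,\mathcal{T}(T))=0$. Because $\text{add}\,T=\mathcal{T}(T)\cap\{X:\text{Ext}^1_A(X,\mathcal{T}(T))=0\}$ (take a right approximation of $X$ and observe the sequence splits), this forces $K\in\text{add}\,T$. With that repair the rest of your diagram chase and the treatment of $\eta_N$ and part~(b) go through. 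One further caution: the final cross-vanishings are claimed for \emph{arbitrary} $M\in\text{mod}\,A$ and $N\in\text{mod}\,C$, not only for objects of the torsion (co)classes, so they are not purely formal consequences of the equivalences; you additionally need $\text{Hom}_A(T,M)\cong\text{Hom}_A(T,tM)$ and $\text{Ext}^1_A(T,M)\cong\text{Ext}^1_A(T,M/tM)$, where $t$ is the torsion radical for $(\mathcal{T}(T),\mathcal{F}(T))$, together with $\text{pd}_{C^{\text{op}}}T\le 1$ (so that $\text{Tor}_2^C(-,T)=0$), and this last fact itself deserves a line of justification.
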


Let $\Lambda$ be a $k$-algebra, then let $\Gamma(\text{mod}\,\Lambda)$ denote the Auslander-Reiten quiver of mod$\,\Lambda$.  A connected component of $\Gamma(\text{mod}\,\Lambda)$ is called {\em regular} if it contains neither projective nor injective modules.  An indecomposable $\Lambda$-module is called {\em regular} if it belongs to a regular component of $\Gamma(\text{mod}\,\Lambda)$. 
There is a close relationship between the Auslander-Reiten quivers of a hereditary algebra and its   tilted algebras. We shall need the following result.

\begin{thm}\label{2.3}\cite[p.330]{ASS}
Let $A$ be a representation-infinite hereditary algebra, $T$ be a preprojective tilting $A$-module, and {\upshape $C=\text{End}_A T$}. Then\\
\indent{\upshape(a)} $\mathcal{T}(T)$ contains all but finitely many nonisomorphic indecomposable $A$-modules, and any indecomposable $A$-module not in $\mathcal{T}(T)$ is preprojective;\\
\indent{\upshape(b)} the image under the functor {\upshape $\text{Hom}_A(T,-)$} of regular $A$-modules yields all regular $C$-modules;\\
\indent{\upshape(c)} the injective and projective dimension of all regular $C$-modules is at most one.    
\end{thm}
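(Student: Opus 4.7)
The plan is to handle the three parts in sequence, using the Brenner--Butler equivalence of Theorem~\ref{2.1} as the principal tool, together with the Auslander--Reiten formula available because $A$ is hereditary.

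For part (a), I would start from the description $\mathcal{T}(T)=\{M\mid \textup{Ext}^1_A(T,M)=0\}$ and rewrite this via the Auslander--Reiten formula over the hereditary algebra $A$ as $\textup{Ext}^1_A(T,M)\cong D\textup{Hom}_A(M,\tau T)$. Since $T$ is preprojective, each indecomposable summand of $\tau T$ is preprojective or zero; and in a hereditary algebra the only indecomposables admitting a nonzero morphism into a fixed preprojective module are themselves preprojective, with only finitely many such isoclasses for a given finite collection of targets. Consequently $\mathcal{F}(T)$ consists of finitely many preprojective modules, which gives (a) immediately.

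For part (b), I would use the equivalence $\textup{Hom}_A(T,-):\mathcal{T}(T)\to \mathcal{Y}(T)$ from Theorem~\ref{2.1}. By (a), every regular $A$-module lies in $\mathcal{T}(T)$ and hence has a well-defined image in $\textup{mod}\,C$. The key observation is that, because $T$ is preprojective, the indecomposable summands of $T$ form a complete slice in the preprojective component of $\Gamma(\textup{mod}\,A)$, and these summands are sent to projective $C$-modules forming a slice in the preprojective component of $\Gamma(\textup{mod}\,C)$; modules strictly to the left (in terms of the mesh structure) of this slice among preprojective $A$-modules comprise the finitely many objects of $\mathcal{F}(T)$, while those strictly to the right are sent bijectively to the remaining preprojective $C$-modules. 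Analogously, the preinjective $A$-modules, all of which lie in $\mathcal{T}(T)$, correspond through the equivalence to the preinjective $C$-modules. Because the equivalence respects irreducible maps and almost split sequences within $\mathcal{T}(T)\cap \mathcal{Y}(T)$, the complement, which in $A$ is precisely the regular modules, must be sent to components of $\Gamma(\textup{mod}\,C)$ containing no projectives and no injectives; these are exactly the regular components of $C$, and the correspondence is onto by invoking the quasi-inverse $-\otimes_C T$.

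For part (c), I would take a regular $A$-module $M$ and use that $\textup{pd}_A M\le 1$ together with $M\in\mathcal{T}(T)$ to construct a short exact sequence $0\to T_1\to T_0\to M\to 0$ with $T_0,T_1\in \textup{add}\,T$ obtained from an $\textup{add}\,T$-approximation. Applying the exact (on $\mathcal{T}(T)$) functor $\textup{Hom}_A(T,-)$ yields a projective resolution of $\textup{Hom}_A(T,M)$ of length at most one in $\textup{mod}\,C$. The bound on injective dimension follows by the dual argument: $DT$ is a cotilting $A^{\mathrm{op}}$-module, every regular $A$-module is also in the coresolving subcategory determined by $DT$, and a $\textup{add}(DT)$-coresolution of length $\le 1$ is transported via $D\textup{Hom}_A(-,T)\cong \textup{Hom}_A(T,-)$ (applied suitably with the Nakayama functor) to an injective coresolution of length $\le 1$ in $\textup{mod}\,C$.

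The step I expect to be the main obstacle is (b), specifically the rigorous identification of the images of $\textup{Hom}_A(T,-)$ as lying in regular rather than transjective components of $\Gamma(\textup{mod}\,C)$. Making the slice-and-mesh argument precise requires careful bookkeeping: one must verify that neither the ``connecting'' slice on the preprojective side nor its preinjective counterpart leaks into a regular component, and that no regular $A$-module can be pushed into a transjective $C$-component by the equivalence. This is where the hypothesis that $T$ is \emph{preprojective} (as opposed to an arbitrary tilting module) is essential, and the cleanest route is to quote the structure of $\Gamma(\textup{mod}\,C)$ for tilted algebras arising from preprojective tilting modules as developed in \cite[Ch.~VIII]{ASS}.
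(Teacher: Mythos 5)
This theorem is quoted in the paper as \cite[p.~330]{ASS} and is not proved there, so there is no ``paper's proof'' to compare against; I can only assess your argument on its own terms.

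Your argument for (a) is essentially correct: over a hereditary algebra the Auslander--Reiten formula gives $\textup{Ext}^1_A(T,M)\cong D\overline{\textup{Hom}}_A(M,\tau T)$, and since for a representation-infinite connected hereditary algebra no morphism from a preinjective to a preprojective module is nonzero, the bar may be dropped. The finiteness and preprojectivity claims then follow as you say. Your sketch of (b) is in the right spirit, and you correctly flag it as the part needing the most care.

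The genuine gap is in (c), in the injective dimension bound. You propose dualizing the $\textup{add}\,T$-resolution argument to produce a short exact sequence $0\to M\to T^0\to T^1\to 0$ with $T^i\in\textup{add}\,T$, which would transport to an injective coresolution of length one. But for a regular $A$-module $M$ and a preprojective tilting module $T$, one has $\textup{Hom}_A(M,T)=0$ (morphisms go from preprojective to regular to preinjective, never backwards), so $M$ is not cogenerated by $T$ and no monomorphism $M\hookrightarrow T^0$ with $T^0\in\textup{add}\,T$ exists. The ``coresolving subcategory determined by $DT$'' does not contain the regular modules in the way the dual of the projective-dimension argument would require. The way to recover (c) is instead to exploit the structure of $\Gamma(\textup{mod}\,C)$ that part (b) establishes: for a concealed algebra, the regular components are $\tau_C$-stable and contain neither projective nor injective modules, so for a regular $C$-module $N$ both $\textup{Hom}_C(DC,\tau_C N)=0$ (injectives are preinjective, no maps to regular) and $\textup{Hom}_C(\tau_C^{-1}N,C)=0$ (projectives are preprojective, no maps from regular), which by the standard characterizations gives $\textup{pd}_C N\le 1$ and $\textup{id}_C N\le 1$. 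Your $\textup{add}\,T$-resolution argument for the projective dimension bound, by contrast, does work (the kernel of an $\textup{add}\,T$-approximation lands in $\textup{add}\,T$ because $K\oplus T$ is rigid of projective dimension $\le 1$ over a hereditary algebra), but it is redundant given Proposition~\ref{2.2}(d).
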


The following proposition describes several facts about  tilted algebras, which we will use throughout the paper.  We recall some terminology first.

 \begin{defn}
Let $\Lambda$ be an algebra.  A {\em path} in mod$\,\Lambda$ with source $X$ and target $Y$ is a sequence of non-zero morphisms $X=X_0\rightarrow X_1\rightarrow \dots \rightarrow X_s=Y$ where $X_i\in\text{mod}\,\Lambda$ for all $i$, and $s\geq 1$.  In this case, $X$ is called a \emph{predecessor} of $Y$ in mod$\,\Lambda$ and $Y$ is called a \emph{successor} of $X$ in mod$\,\Lambda$.  

A {\em path} in $\Gamma(\text{mod}\,\Lambda)$ with source $X$ and target $Y$ is a sequence of arrows $X=X_0\rightarrow X_1\rightarrow \dots \rightarrow X_s=Y$ in the Auslander-Reiten quiver.  In addition,  such path is called {\em sectional} if for each $i$ with $0<i<s$, we have $\tau_\Lambda X_{i+1}\not = X_{i-1}$.  
\end{defn}

\begin{prop}  \label{2.2} Let $A$ be a hereditary algebra, $T$ a tilting $A$-module, and {\upshape{$C=\text{End}_A T$}} the corresponding tilted algebra. Then 
\begin{itemize}
\item[{\rm (a)}]  {\upshape{gl.dim}}C $\leq 2$.
\item[{\rm (b)}] For all {\upshape{$M\in \text{ind}\,C$}}  {\upshape{id}}$_C M \leq 1$ or  {\upshape{pd}}$_C M \leq 1$.
\item[{\rm (c)}] For all $M\in \mathcal{X}(T)$ {\upshape{id}}$_C M \leq 1$.
\item[{\rm (d)}] For all $M\in \mathcal{Y}(T)$ {\upshape{pd}}$_C M \leq 1$.
\item[{\rm (e)}] $(\mathcal{X}(T), \mathcal{Y}(T))$ is splitting, which means that every indecomposable C-module belongs either to $\mathcal{X}(T)$ or $\mathcal{Y}(T)$. 
\item[{\rm (f)}] $\mathcal{Y}(T)$ is closed under predecessors and $\mathcal{X}(T)$ is closed under successors. 
\end{itemize}
\end{prop}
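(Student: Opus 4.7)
The six assertions are all classical properties of tilted algebras, and all of them flow from the Brenner--Butler Theorem \ref{2.1} together with the heredity of $A$. I would first prove (c) and (d), then use the resulting torsion-pair structure to deduce the splitting property (e) and the closure property (f), and finally derive (a) and (b) as consequences.

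For (d), given $Y \in \mathcal{Y}(T)$, write $Y = \textup{Hom}_A(T,N)$ with $N \in \mathcal{T}(T)$ via Theorem~\ref{2.1}. Since $\mathcal{T}(T) = \textup{gen}\,T$, I would take a minimal right $\textup{add}\,T$-approximation $f\colon T_0 \to N$, which is necessarily surjective. Applying $\textup{Hom}_A(T,-)$ to $0 \to \ker f \to T_0 \to N \to 0$ and using the minimality of $f$ forces $\textup{Ext}^1_A(T,\ker f)=0$, so $\ker f \in \mathcal{T}(T)$; combining this with $\textup{pd}_A T \le 1$ (since $A$ is hereditary and $T$ is tilting) shows that $\ker f$ in fact lies in $\textup{add}\,T$. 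Then $\textup{Hom}_A(T,-)$ converts the sequence into a length-one projective resolution of $Y$ over $C$, giving $\textup{pd}\,Y \le 1$. Dually for (c): take an injective coresolution $0 \to N' \to I^0 \to I^1 \to 0$ of $N' \in \mathcal{F}(T)$ (of length $\le 1$ by heredity), apply $\textup{Hom}_A(T,-)$, and use that $\textup{Ext}^1_A(T,I^j)=0$ for injective $I^j$ and that $\textup{Hom}_A(T,I^j)$ is an injective $C$-module (a standard tilting-theoretic fact) to obtain $\textup{id}\,X \le 1$.

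For (e), I would establish $\textup{Ext}^1_C(Y,X)=0$ for all $Y \in \mathcal{Y}(T)$ and $X \in \mathcal{X}(T)$; the canonical torsion-torsionfree sequence $0 \to tM \to M \to M/tM \to 0$ of any $C$-module then splits, so that every indecomposable $C$-module lies in exactly one of $\mathcal{X}(T), \mathcal{Y}(T)$. The cleanest verification of the $\textup{Ext}^1$ vanishing uses Happel's derived equivalence $R\textup{Hom}_A(T,-) \colon \mathcal{D}^b(A) \to \mathcal{D}^b(C)$, under which $Y$ corresponds to $N \in \mathcal{T}(T)$ in degree $0$ and $X = \textup{Ext}^1_A(T,N')$ corresponds to $N'[1]$, so $\textup{Ext}^1_C(Y,X) \cong \textup{Hom}_A(N,N') = 0$ because $(\mathcal{T}(T),\mathcal{F}(T))$ is a torsion pair in $\textup{mod}\,A$. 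For (f), any torsion pair satisfies $\textup{Hom}_C(\mathcal{X}(T),\mathcal{Y}(T)) = 0$, so combining this with (e): along any path of nonzero morphisms between indecomposables ending at some $Y \in \mathcal{Y}(T)$, no intermediate object can lie in $\mathcal{X}(T)$, since there would then be a last transition from an $\mathcal{X}$-object to a $\mathcal{Y}$-object, giving a nonzero element of $\textup{Hom}_C(\mathcal{X},\mathcal{Y}) = 0$. The dual statement for $\mathcal{X}(T)$ closed under successors is analogous.

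Parts (b) and (a) follow: (b) is immediate from splitting together with the bounds of (c) and (d); for (a) I would either deduce $\textup{gl.dim}\,C \le 2$ from (b) via the torsion-torsionfree short exact sequence, or redo the derived-category computation to see directly that $\textup{Ext}^k_C(-,-) = 0$ for all $k \ge 3$, using that every $C$-module corresponds in $\mathcal{D}^b(A)$ to a complex with cohomology in two consecutive degrees and that $A$ has global dimension at most $1$. The main obstacle I anticipate is the technical point in (d) that $\ker f$ actually lies in $\textup{add}\,T$ and not merely in $\mathcal{T}(T)$; this is exactly where heredity of $A$ and $\textup{pd}_A T \le 1$ are needed essentially, and the remainder of the proof is a fairly direct unpacking of the Brenner--Butler equivalence.
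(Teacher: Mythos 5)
The paper does not prove Proposition \ref{2.2}; it is stated as a collection of known facts cited from the standard tilting-theory literature (ASS, chapters VI and VIII). So there is no paper proof to compare with, and the question is only whether your reconstruction is sound. Your overall strategy -- derive everything from the Brenner--Butler theorem and heredity of $A$, prove (c)/(d), then (e)/(f), then (a)/(b) -- is the right one, but three of the steps have real problems.

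The argument for (c) does not dualize (d) correctly. Applying $\textup{Hom}_A(T,-)$ to $0\to N'\to I^0\to I^1\to 0$ with $N'\in\mathcal F(T)$, and using $\textup{Hom}_A(T,N')=0$ and $\textup{Ext}^1_A(T,I^0)=0$, yields $0\to\textup{Hom}_A(T,I^0)\to\textup{Hom}_A(T,I^1)\to X\to 0$, which exhibits $X=\textup{Ext}^1_A(T,N')$ as a \emph{cokernel}, not a submodule; even if the two middle terms were injective this would not bound $\textup{id}_C X$. Moreover, $\textup{Hom}_A(T,I)$ is not an injective $C$-module in general: by Proposition \ref{2.4} of the paper it is injective only when $\textup{soc}\,I$ is the top of a \emph{projective} summand of $T$. (Example: $A=k(1\to 2)$, $T=P_1\oplus S_1$; then $I(2)=P_1$ is $A$-injective but $\textup{Hom}_A(T,P_1)$ is a one-dimensional, non-injective $C$-module.) The correct dual argument takes a left $\textup{add}\,\tau T$-approximation $N'\hookrightarrow(\tau T)_0$ inside $\mathcal F(T)=\textup{Cogen}\,\tau T$, shows its cokernel lies in $\textup{add}\,\tau T$, and applies $\textup{Ext}^1_A(T,-)$; the resulting sequence $0\to X\to\textup{Ext}^1_A(T,(\tau T)_0)\to\textup{Ext}^1_A(T,(\tau T)_1)\to 0$ has injective middle and right terms by Proposition \ref{2.4}.

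In (d) you correctly identify the crux, but the reason given ($K:=\ker f\in\mathcal T(T)$ together with $\textup{pd}_AT\le 1$) does not by itself yield $K\in\textup{add}\,T$. What one actually needs: applying $\textup{Hom}_A(-,T)$ to $0\to K\to T_0\to N\to 0$ together with heredity and $\textup{Ext}^1_A(T,T)=0$ gives $\textup{Ext}^1_A(K,T)=0$; applying $\textup{Hom}_A(-,K)$ and using $\textup{Ext}^1_A(T,K)=0$ gives $\textup{Ext}^1_A(K,K)=0$. Hence $T\oplus K$ is rigid with projective dimension at most one, and since $T$ already realizes the maximal number of nonisomorphic indecomposable rigid summands, $K\in\textup{add}\,T$. (Minimality of $f$ is not what is used; the approximation property alone gives $\textup{Ext}^1_A(T,K)=0$.)

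In (e), the identification is off by a shift: with $Y\leftrightarrow N$ and $X\leftrightarrow N'[1]$ under the derived equivalence, one gets $\textup{Ext}^1_C(Y,X)\cong\textup{Hom}_{\mathcal D^b(A)}(N,N'[2])\cong\textup{Ext}^2_A(N,N')$, not $\textup{Hom}_A(N,N')$. The vanishing then follows from heredity of $A$, not from $\textup{Hom}(\mathcal T,\mathcal F)=0$; the distinction matters, since the torsion-pair $\textup{Hom}$ vanishing would still hold over a non-hereditary $A$ where the splitting fails. Parts (f), (b), and the derived-category route to (a) are fine once (c), (d), (e) are repaired.
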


There is also a precise description of injective modules in a tilted algebra in terms of the corresponding hereditary algebra.  

\begin{prop} \cite[Proposition VI 5.8]{ASS}\label{2.4}
Let $A$ be a hereditary algebra, $T$ a tilting $A$-module, and {\upshape $C=\text{End}_A T$}.  Let $T_1, \dots , T_n$ be a complete set of pairwise nonisomorphic indecomposable direct summands of $T$.  Assume that the modules $T_1, \dots ,T_m$ are projective, the remaining modules $T_{m+1},\dots , T_n$ are not projective, and $I_1, \dots , I_m$ are indecomposable injective $A$-modules with {\upshape $\text{soc}\, I_j\cong T_j/\text{rad}\, T_j$}, for $j=1,\dots , m$.  Then the $C$-modules 
{\upshape $$\text{Hom}_A(T, I_1),\dots ,\text{Hom}_A(T, I_m), \text{Ext}_A^1(T, \tau T_{m+1}),\dots ,\text{Ext}_A^1(T,\tau T_n)$$}
form a complete set of pairwise nonisomorphic indecomposable injective modules.
\end{prop}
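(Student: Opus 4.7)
The plan is to produce the indecomposable injective $C$-modules as the images of the indecomposable projectives under the Nakayama functor $\nu_C = D\,\textup{Hom}_C(-,C)$, and then identify those images with the modules in the statement via the bimodule $T$. Since $T=T_1\oplus\cdots\oplus T_n$ is basic, the decomposition $1_C=f_1+\cdots+f_n$ of the identity into primitive orthogonal idempotents yields indecomposable projective right $C$-modules $P_i^C=f_iC\cong \textup{Hom}_A(T,T_i)$, and dually $\textup{Hom}_C(f_iC,C)\cong Cf_i\cong \textup{Hom}_A(T_i,T)$ as left $C$-modules. Hence
\[
\nu_C\textup{Hom}_A(T,T_i)\;\cong\;D\textup{Hom}_A(T_i,T),
\]
with the right $C$-action coming from the left $C$-action on $\textup{Hom}_A(T_i,T)$ inherited from the $C$-$A$-bimodule structure of $T$.

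I would then split into two cases according to whether $T_i$ is projective in $\textup{mod}\,A$. For $i\leq m$, $T_i\cong eA$ for an idempotent $e\in A$, and one has the standard duality isomorphism $D\textup{Hom}_A(T_i,M)\cong \textup{Hom}_A(M,\nu_A T_i)$ for every $M\in\textup{mod}\,A$, since both sides compute $D(Me)$. Because $A$ is hereditary, $\nu_A$ sends the indecomposable projective $T_i$ to the indecomposable injective $A$-module whose socle is $T_i/\textup{rad}\,T_i$, namely $I_i$. Specializing to $M=T$ gives $\nu_C\textup{Hom}_A(T,T_i)\cong \textup{Hom}_A(T,I_i)$.

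For $i>m$, the module $T_i$ is indecomposable non-projective in $\textup{mod}\,A$, so the Auslander-Reiten formula over the hereditary algebra $A$ applies and yields
\[
D\textup{Hom}_A(T_i,M)\;\cong\;\textup{Ext}_A^1(M,\tau T_i).
\]
Setting $M=T$ gives $\nu_C\textup{Hom}_A(T,T_i)\cong \textup{Ext}_A^1(T,\tau T_i)$. Since $\nu_C$ restricts to a bijection between indecomposable projectives and indecomposable injectives, combining the two cases exhibits the $n$ listed modules as a complete set of pairwise non-isomorphic indecomposable injective right $C$-modules.

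The main obstacle I anticipate is not any individual computation but the careful bookkeeping of the $C$-module structures across the identifications. The duality isomorphism for projectives and the Auslander-Reiten isomorphism are natural in $M$; applied with $M=T$ they are therefore functorial under left multiplication by $\textup{End}_A(T)=C$, which provides the required right $C$-linearity rather than only $k$-linearity. Once this naturality is spelled out, the rest is essentially formal and amounts to noting that the indexing sets $\{1,\ldots,m\}$ and $\{m+1,\ldots,n\}$ partition the $n$ summands of $T$.
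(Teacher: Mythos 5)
The paper cites this result as \cite[Proposition VI 5.8]{ASS} without proof, so there is no internal argument to compare against; your argument via the Nakayama functor is correct and is in fact the standard route taken in that reference. The two case computations are sound: for projective $T_i = eA$ both sides identify with $D(Te)$, and for non-projective indecomposable $T_i$ the simplification $\underline{\textup{Hom}}_A(T_i,-) = \textup{Hom}_A(T_i,-)$ indeed holds over hereditary $A$ (a nonzero map from $T_i$ to a projective would split, contradicting indecomposability), which is exactly what allows the Auslander--Reiten formula to be applied to the full Hom-space as you claim.
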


\subsection{Cluster categories and cluster-tilted algebras}
Let $A=kQ$ and let  $\mathcal{D}=\mathcal{D}^b(\text{mod}\,A)$ denote the derived category of bounded complexes  of $A$-modules. The \emph{cluster category} $\mathcal{C}_A$ is defined as the orbit category of the derived category with respect to the functor $\tau_{\mathcal{D}}^{-1}[1]$, where $\tau_{\mathcal{D}}$ is the Auslander-Reiten translation in the derived category and $[1]$ is the shift.  Cluster categories were introduced in \cite{BMRRT}, and in \cite{CCS} for type $\mathbb{A}$, and were further studied in \cite{K,KR,A,P}.  They are triangulated categories \cite{K}, that  have Serre duality and are 2-Calabi Yau \cite{BMRRT}.  

An object $T$ in $\mathcal{C}_A$ is called \emph{cluster-tilting} if $\textup{Ext}^1_{\mathcal{C}_A}(T,T)=0$ and $T$ has $|Q_0|$ non-isomorphic indecomposable direct summands. The endomorphism algebra $\textup{End}_{\mathcal{C}_A}T$ of a cluster-tilting object is called a \emph{cluster-tilted algebra} \cite{BMR}.

The following theorem will be useful later.
\begin{thm}\label{2.5} \cite{BMR}
If $T$ is a cluster-tilting object in $\mathcal{C}_A$, then $\textup{Hom}_{\mathcal{C}_A}(T,-)$ induces an equivalence  of categories
{\upshape $\mathcal{C}_A/\text{add}(\tau T)\rightarrow \text{mod}\,\text{End}_{\mathcal{C}_A} T$}.
\end{thm}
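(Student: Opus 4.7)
The plan is to realize $B=\textup{End}_{\mathcal{C}_A}T$ as the endomorphism ring of a ``projective generator'' $T$ of $\mathcal{C}_A$ (up to summands of $\tau T$), so that $\textup{Hom}_{\mathcal{C}_A}(T,-)$ behaves like the usual ``module functor'' in Morita-type situations. The four things that have to be checked are: (i) the image lands in $\textup{mod}\,B$; (ii) the kernel is exactly $\textup{add}(\tau T)$; (iii) the induced functor on $\mathcal{C}_A/\textup{add}(\tau T)$ is fully faithful; (iv) it is essentially surjective.

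For (i) I would observe that $B$ is finite dimensional (since $A$ is, and $\textup{Hom}_{\mathcal{C}_A}(T,T)$ is a finite sum of Hom-spaces in $\mathcal{D}^b(\textup{mod}\,A)$), and that for any $X\in\mathcal{C}_A$ the space $\textup{Hom}_{\mathcal{C}_A}(T,X)$ is a finite-dimensional right $B$-module via precomposition. For (ii) the key input is the $2$-Calabi--Yau property plus $\textup{Ext}^1_{\mathcal{C}_A}(T,T)=0$: Serre duality gives
\[
\textup{Hom}_{\mathcal{C}_A}(T,\tau T)\;\cong\; D\,\textup{Ext}^1_{\mathcal{C}_A}(T,T)\;=\;0,
\]
so $\textup{add}(\tau T)$ lies in the kernel. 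Conversely, if $\textup{Hom}_{\mathcal{C}_A}(T,X)=0$, then the cluster-tilting property of $T$ supplies an approximation triangle $T_1\to T_0\to X\to T_1[1]$ with $T_0,T_1\in\textup{add}\,T$; applying $\textup{Hom}_{\mathcal{C}_A}(T,-)$ forces $T_0=0$, whence $X\cong T_1[1]$, and the identification $T[1]\cong \tau T$ in the $2$-CY category $\mathcal{C}_A$ (since $\tau_{\mathcal{D}}^{-1}[1]$ acts trivially) places $X$ in $\textup{add}(\tau T)$.

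For (iv), given $M\in\textup{mod}\,B$ I would take a projective presentation $P_1\to P_0\to M\to 0$, lift each $P_i$ to a summand $T_i$ of $T$ via $P_i=\textup{Hom}_{\mathcal{C}_A}(T,T_i)$, and lift the map $P_1\to P_0$ to $f\colon T_1\to T_0$ (possible because $\textup{Hom}_{\mathcal{C}_A}(T,-)$ is full on $\textup{add}\,T$, being essentially the identity there). Completing $f$ to a triangle $T_1\to T_0\to X\to T_1[1]$ and applying $\textup{Hom}_{\mathcal{C}_A}(T,-)$ yields, by the vanishing $\textup{Ext}^1_{\mathcal{C}_A}(T,T_0)=0$, an exact sequence $P_1\to P_0\to \textup{Hom}_{\mathcal{C}_A}(T,X)\to 0$, so $\textup{Hom}_{\mathcal{C}_A}(T,X)\cong M$.

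The real work is (iii). For faithfulness I would take $g\colon X\to Y$ with $\textup{Hom}_{\mathcal{C}_A}(T,g)=0$, choose an approximation triangle $T_1^X\to T_0^X\to X\to T_1^X[1]$, and argue that $g$ must factor through $T_1^X[1]\in\textup{add}(\tau T)$: the composition $T_0^X\to X\xrightarrow{g} Y$ is killed by $\textup{Hom}_{\mathcal{C}_A}(T,-)$, hence is itself zero since $T_0^X\in\textup{add}\,T$, so $g$ factors through the connecting map into $T_1^X[1]$. For fullness, given $\varphi\colon \textup{Hom}_{\mathcal{C}_A}(T,X)\to\textup{Hom}_{\mathcal{C}_A}(T,Y)$ I would use the approximation triangle for $X$ to lift $\varphi$ step by step: first lift on $T_0^X$ using that $\textup{Hom}_{\mathcal{C}_A}(T,-)$ is a bijection on $\textup{add}\,T$, then extend to $X$ using $\textup{Ext}^1_{\mathcal{C}_A}(T_1^X[1],Y)\cong D\,\textup{Hom}_{\mathcal{C}_A}(Y,T_1^X)$ (by $2$-CY) and the cluster-tilting hypothesis to kill the relevant obstruction. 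The main obstacle I anticipate is in this last step: controlling the obstruction to lifting a module map to a triangle map is where the $2$-Calabi--Yau symmetry, combined with $\textup{Ext}^1_{\mathcal{C}_A}(T,T)=0$, must be used delicately; everything else is reasonably formal once the kernel has been identified.
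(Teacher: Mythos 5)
The paper does not actually prove this statement: it is Theorem~\ref{2.5}, quoted directly from Buan--Marsh--Reiten \cite{BMR} with no argument supplied. So there is nothing in the paper to compare against, and what you have written is a sketch of the original BMR proof. Your overall strategy --- identify the kernel as $\mathrm{add}(\tau T)$ using $\tau=[1]$ and Serre duality, then use right $\mathrm{add}\,T$-approximation triangles $T_1\to T_0\to X\to T_1[1]$ for full faithfulness and essential surjectivity --- is exactly the standard one, and it is correct in outline.

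A few local remarks. In the kernel computation, ``applying $\mathrm{Hom}_{\mathcal{C}_A}(T,-)$ forces $T_0=0$'' is only literally true if you insist that $T_0\to X$ is a \emph{minimal} right $\mathrm{add}\,T$-approximation; without minimality the correct conclusion is that $\mathrm{Hom}(T,T_1)\to\mathrm{Hom}(T,T_0)$ is surjective, so $T_1\to T_0$ is a split epimorphism and $X$ is a summand of $T_1[1]$, which still lands you in $\mathrm{add}(\tau T)$. In (iv) the vanishing you invoke should be $\mathrm{Ext}^1_{\mathcal{C}_A}(T,T_1)=0$, not $\mathrm{Ext}^1_{\mathcal{C}_A}(T,T_0)=0$ --- it is the $\mathrm{Hom}(T,T_1[1])$ term at the end of the long exact sequence that must die. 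Finally, your anticipated ``real work'' in the fullness half of (iii) is less delicate than you fear: once you lift $\varphi\circ\mathrm{Hom}(T,a)$ to $g_0\colon T_0^X\to Y$, the composite $g_0 f\colon T_1^X\to Y$ satisfies $\mathrm{Hom}(T,g_0 f)=\varphi\circ\mathrm{Hom}(T,af)=0$, and since $T_1^X\in\mathrm{add}\,T$ (here is where the cluster-tilting hypothesis enters, to ensure $\mathrm{Ext}^1_{\mathcal{C}_A}(T,T_1^X)=0$) the functor $\mathrm{Hom}_{\mathcal{C}_A}(T,-)$ is faithful on morphisms out of $T_1^X$, so $g_0f=0$ on the nose and $g_0$ factors through the cone $X$ by the axioms of a triangulated category. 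No separate $2$-Calabi--Yau obstruction computation is required at that stage; the $2$-CY property is used earlier, for $\tau\cong[1]$ and for identifying the kernel.
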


\subsection{Relation extensions}
Let $C$ be an algebra of global dimension at most two and let $E$ be the $C$-$C$-bimodule $E=\text{Ext}_C ^2 (DC,C)$.
The \emph{relation extension} of $C$ is the trivial extension algebra $B=C\ltimes E$, whose underlying $C$-module is $C\oplus E$, and multiplication is given by $(c,e)(c',e')=(cc',ce'+ec')$.  
Relation extensions where introduced in \cite{ABS}. In the special case where $C$ is a tilted algebra, we have the following result.

\begin{thm}\cite{ABS}
Let $C$ be a tilted algebra. Then $B=C\ltimes \textup{Ext}_C ^2 (DC,C)$ is a cluster-tilted algebra. Moreover all cluster-tilted algebras are of this form.
\end{thm}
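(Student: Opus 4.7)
The plan is to prove both directions by working inside the derived category and using the orbit category description of $\mathcal{C}_A$. Write $C = \text{End}_A T$ for some hereditary $A$ and tilting $A$-module $T$. Regard $T$ as an object of $\mathcal{C}_A$; since $T$ is tilting in $\text{mod}\,A$ and $A$ is hereditary, a short computation shows $\text{Ext}^1_{\mathcal{C}_A}(T,T) = 0$ and $T$ has $|Q_0|$ indecomposable summands, so $T$ is a cluster-tilting object. Hence $\text{End}_{\mathcal{C}_A} T$ is a cluster-tilted algebra, and the goal is to identify this algebra with $C\ltimes E$ where $E=\text{Ext}^2_C(DC,C)$.

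For this identification, I would compute
\[
\text{End}_{\mathcal{C}_A}T \;=\; \bigoplus_{n\in\mathbb{Z}}\text{Hom}_{\mathcal{D}^b(A)}(T, F^n T), \qquad F=\tau_{\mathcal{D}}^{-1}[1].
\]
Because $A$ is hereditary and $T$ is tilting, all but two summands vanish, leaving
\[
\text{End}_{\mathcal{C}_A}T \;\cong\; \text{End}_A T \;\oplus\; \text{Hom}_{\mathcal{D}^b(A)}(T, \tau^{-1}T[1]).
\]
The first summand is the subalgebra $C$. For the second summand, invoke Happel's derived equivalence $\mathcal{D}^b(\text{mod}\,A) \simeq \mathcal{D}^b(\text{mod}\,C)$ sending $T \mapsto C$; under this equivalence the functor $\tau_{\mathcal{D}}^{-1}[1]$ becomes $\nu_C^{-1}[2]$ where $\nu_C$ is the derived Nakayama functor. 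Using $\text{gl.dim}\,C \leq 2$ (Proposition \ref{2.2}(a)) and the fact that $\text{Hom}_{\mathcal{D}^b(C)}(C, X[i])$ recovers cohomology, this second summand becomes precisely $\text{Ext}^2_C(DC,C) = E$. Thus as $C$-$C$-bimodules, $\text{End}_{\mathcal{C}_A}T \cong C\oplus E$.

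To finish the first statement, I would verify that the multiplication on $\text{End}_{\mathcal{C}_A}T$ coincides with the trivial extension multiplication. The key point is that products of two elements from the $E$-part vanish in $\text{End}_{\mathcal{C}_A}T$, because such a product factors through $\text{Hom}_{\mathcal{D}^b(A)}(T, F^2 T)$, which vanishes by the same hereditary argument (so that $E\cdot E \subseteq 0$, matching the trivial extension). The cross-multiplications $C\cdot E$ and $E\cdot C$ coincide with the natural $C$-bimodule structure on $E$ by functoriality of $F$, so the algebra structures agree.

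For the "moreover" part, I would start with an arbitrary cluster-tilted algebra $B = \text{End}_{\mathcal{C}_A}T'$ for some cluster-tilting object $T'$ in some $\mathcal{C}_A$. The task is to lift $T'$ to a genuine tilting object in $\mathcal{D}^b(\text{mod}\,A)$, i.e., to replace the $\mathcal{C}_A$-summands of $T'$ by representatives sitting in a fundamental domain that is a tilting set. The main obstacle is precisely this lifting: one must arrange all summands of $T'$ to lie in the subcategory $\text{mod}\,A\vee A[1]$ (or some fundamental domain) while remaining $\text{Ext}^1$-free in $\mathcal{D}^b(A)$. Once this is done, the resulting $T$ is a tilting object with $C=\text{End}_A T$ tilted, and the first part applies to give $B \cong C\ltimes\text{Ext}^2_C(DC,C)$. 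This lifting step is the main technical obstacle; I would expect to need either an explicit mutation argument reducing to the case where $T'$ is already in $\text{mod}\,A$, or an abstract Riedtmann-style result on the existence of a suitable fundamental domain.
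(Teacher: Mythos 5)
The paper gives no proof of this theorem---it is recalled with a citation to [ABS]---so I compare your proposal against the argument in that source. Your plan is essentially the [ABS] proof: decompose $\text{End}_{\mathcal{C}_A}T = \bigoplus_{n}\text{Hom}_{\mathcal{D}^b(A)}(T, F^nT)$, show that heredity of $A$ kills all terms except $n=0,1$, identify the $n=0$ piece with $C=\text{End}_AT$, and identify the $n=1$ piece with $E=\text{Ext}^2_C(DC,C)$ by transporting $F=\tau_{\mathcal{D}}^{-1}[1]=\nu^{-1}[2]$ across Happel's derived equivalence $R\text{Hom}_A(T,-):\mathcal{D}^b(A)\to\mathcal{D}^b(C)$, under which $T\mapsto C$ and $\nu_A\mapsto\nu_C$, so that $\text{Hom}_{\mathcal{D}^b(A)}(T,FT)\cong\text{Hom}_{\mathcal{D}^b(C)}(C,\nu_C^{-1}C[2])=\text{Ext}^2_C(DC,C)$. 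The verification that the multiplication is that of a trivial extension, via $\text{Hom}_{\mathcal{D}^b(A)}(T,F^2T)=0$, is also the argument used in [ABS].

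Two comments. First, your claim that ``all but two summands vanish'' deserves a slightly more careful degree count: $\tau_{\mathcal{D}}^{\pm1}$ shifts projective and injective summands of $T$ by one cohomological degree, so one must track degrees of $F^nT$ summand-by-summand; the conclusion that only $n=0,1$ contribute does hold (using that for hereditary $A$, $\text{Hom}_{\mathcal{D}^b(A)}(M,N[i])=0$ for modules $M,N$ unless $i\in\{0,1\}$), but it is worth spelling out. Second, for the ``moreover'' part, the lifting step you flag as the main obstacle is not something you need to reinvent: it is precisely the result of Buan--Marsh--Reineke--Reiten--Todorov [BMRRT] that every cluster-tilting object in $\mathcal{C}_A$ is induced by a tilting module over some hereditary algebra derived equivalent to $A$. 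Citing that closes the argument. So your outline is correct, matches the proof in [ABS], and becomes a complete proof once the [BMRRT] lifting result is invoked and the degree bookkeeping is written out.
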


 \begin{remark}
This shows that the tilted algebra $C$ is a subalgebra and a quotient of the cluster-tilted algebra $B$.  Let $\pi: B\to C$ be the corresponding surjective algebra homomorphism.  We will often consider a $C$-module $M$ as a $B$-module with action $M\cdot b = M\cdot \pi(b)$.  In particular, $C$ and $E$ are right $B$-modules, and there exists a short exact sequence in mod$\,B$
\begin{equation}\label{(1)}    \xymatrix{0\ar[r]&E\ar[r]^i&B\ar[r]^{\pi}&C\ar[r]&0}. \end{equation}
\end{remark}

  \subsection{Slices and local slices}\label{sect 2.4}
Let $\Lambda$ be a $k$-algebra.

\begin{defn}
A {\em slice} $\Sigma$ in $\Gamma(\text{mod}\,\Lambda)$ is a set of indecomposable $\Lambda$-modules such that 
\begin{enumerate}[(i)]
\item $\Sigma$ is sincere,  meaning $\text{Hom}_{\Lambda}(P, \Sigma)\not=0$ for any projective $\Lambda$-module $P$. 
\item  Any path in mod$\,\Lambda$ with source and target in $\Sigma$ consists entirely of modules in $\Sigma$. 
\item  If $M$ is an indecomposable non-projective $\Lambda$-module then at most one of $M$, $\tau_\Lambda M$ belongs to $\Sigma$.
\item  If $M\rightarrow S$ is an irreducible morphism with $M,S\in\text{ind}\,\Lambda$ and $S\in\Sigma$, then either $M$ belongs to $\Sigma$ or $M$ is non-injective and $\tau^{-1}_\Lambda M$ belongs to $\Sigma$. 
\end{enumerate}
\end{defn}

Given an indecomposable $\Lambda$-module $Y$ in a connected component $\Gamma$ of $\Gamma(\text{mod}\,\Lambda)$ let 
$$\Sigma (\rightarrow Y) = \{ X\in\text{ind}\,\Lambda|\,\exists\; X\rightarrow \dots \rightarrow Y \in \Gamma \text{ and every path from } X \text{ to } Y\text{ in } \Gamma \text{  is sectional}\}$$
$$\Sigma (Y\rightarrow) = \{ X\in\text{ind}\,\Lambda|\,\exists\; Y\rightarrow \dots \rightarrow X \in \Gamma \text{ and every path from } Y \text{ to } X\text{ in } \Gamma \text{  is sectional}\}$$
be two subquivers of $\Gamma$. Consider the following result due to Ringel.

\begin{prop}\cite{R} \label{2.6}
 Let $Y$ be an indecomposable sincere module in a preprojective or preinjective component.  Then both $\Sigma(\rightarrow Y)$ and $\Sigma(Y\rightarrow)$ are slices. 
\end{prop}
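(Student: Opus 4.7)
The plan is to verify for $\Sigma = \Sigma(\rightarrow Y)$ the four defining properties of a slice; the argument for $\Sigma(Y\rightarrow)$ follows by duality. The central input is that every indecomposable in a preprojective or preinjective component of $\Gamma(\text{mod}\,\Lambda)$ is \emph{directing}, so the set of predecessors of $Y$ in $\text{mod}\,\Lambda$ lies entirely in the component of $Y$, and paths between indecomposables in this component can be analyzed via $\Gamma(\text{mod}\,\Lambda)$ together with the mesh relations.

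I would first treat the combinatorial conditions (iii) and (iv). For (iii), if both $M$ and $\tau M$ lay in $\Sigma$, then concatenating the mesh-path $\tau M \rightarrow E \rightarrow M$ (through the middle term of the AR sequence ending at $M$) with a sectional path $M \rightarrow \cdots \rightarrow Y$ would produce a path from $\tau M$ to $Y$ that is non-sectional at $M$, contradicting that \emph{every} path from $\tau M$ to $Y$ is sectional. For (iv), let $M \rightarrow S$ be irreducible with $S \in \Sigma$ and fix a sectional path $S = X_0 \rightarrow X_1 \rightarrow \cdots \rightarrow X_s = Y$. If $s \geq 1$ and $\tau X_1 = M$, then $\tau^{-1} M = X_1$ exists (so $M$ is non-injective) and the suffix $X_1 \rightarrow \cdots \rightarrow Y$ is sectional, placing $\tau^{-1}M$ in $\Sigma$. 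Otherwise the prolongation $M \rightarrow S \rightarrow X_1 \rightarrow \cdots \rightarrow Y$ is sectional at $S$, and a case check of the mesh ending at $S$ against the sectionality of paths out of $S$ puts $M$ itself in $\Sigma$. Sincerity of $\Sigma$ follows by picking, for each indecomposable projective $P$, a nonzero morphism $P \rightarrow Y$ (which exists because $Y$ is sincere), factoring it through irreducible morphisms into a path in $\Gamma$, and taking the longest sectional suffix ending at $Y$ to locate an $X \in \Sigma$ receiving a nonzero map from $P$.

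Condition (ii) is the most delicate. Given $X, Z \in \Sigma$ joined by a path $X = W_0 \rightarrow W_1 \rightarrow \cdots \rightarrow W_r = Z$ in $\text{mod}\,\Lambda$, I would first reduce to a path in $\Gamma$: in a preprojective or preinjective component every nonzero morphism between indecomposables is a sum of compositions of irreducible morphisms (since the radical of the module category is nilpotent on finite slices of the component), so each $W_j \rightarrow W_{j+1}$ refines to an AR-path. Concatenating with a sectional path $Z \rightarrow \cdots \rightarrow Y$ then produces, for each $j$, a path $W_j \rightarrow \cdots \rightarrow Y$ in $\Gamma$, and I must show every such path is sectional. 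This is the main obstacle I anticipate: ruling out non-sectional detours requires careful bookkeeping with the meshes encountered along the way, exploiting that $Y$ is directing so that two distinct paths from a given $W_j$ to $Y$ cannot both survive in $\text{mod}\,\Lambda$ when one contains a non-sectional subpath. Once this is in place, sectionality of the composite path to $Y$ forces $W_j \in \Sigma$, completing the verification.
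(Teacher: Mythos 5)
The paper does not actually supply a proof of this proposition; it cites it to Ringel's book \cite{R} (where it appears as a result on section modules of directing sincere modules, see e.g.\ Ringel's discussion of slices in Sections~2.4 and~4.2 of \emph{Tame Algebras and Integral Quadratic Forms}). So there is no in-paper proof to compare against; I can only assess your argument on its own terms and against the standard one.

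Your overall strategy is in the right spirit --- exploit that modules in a preprojective or preinjective component are directing, that the radical is locally nilpotent there, and read off properties from the mesh structure of $\Gamma$. Your treatment of condition (iii) is clean and correct: the mesh path $\tau M\to E\to M$ is automatically non-sectional at $E$ (since $\tau(X_2)=\tau M = X_0$), so concatenating with any sectional path from $M$ to $Y$ gives a non-sectional path from $\tau M$ to $Y$, contradicting $\tau M\in\Sigma$. For condition~(iv) the sketch is plausible but the ``case check of the mesh'' is not spelled out, and the sincerity argument has a subtle problem: after refining a nonzero map $P\to Y$ into a sum of AR-paths and selecting a sectional suffix $X\to\cdots\to Y$, you assert that $P$ maps nonzero to $X$; but a path in $\Gamma$ from $P$ to $X$ does not by itself produce a nonzero composite, so one more argument is needed (typically: any predecessor of a directing module in the same component maps nonzero to some vertex on a sectional path to it).

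The genuine gap is condition~(ii), which you yourself flag as ``the main obstacle I anticipate.'' Two issues. First, when you refine each $W_j\to W_{j+1}$ into irreducible morphisms, you obtain a \emph{sum} of paths in $\Gamma$, not a single path; you need to argue that some summand survives and can be concatenated coherently, which is delicate. Second, and more importantly, your proposed way to rule out non-sectional detours --- ``two distinct paths from $W_j$ to $Y$ cannot both survive in $\textup{mod}\,\Lambda$ when one contains a non-sectional subpath'' because $Y$ is directing --- is not a correct consequence of directedness. Directedness says $Y$ lies on no cycle; it does not prevent two different paths from some $W_j$ to $Y$ from both having nonzero composites. The standard way to close this gap (and the key lemma you are missing) is: in a preprojective or preinjective component, a non-sectional path from $W$ to $Y$ in $\Gamma$ forces the existence of a path from $W$ to $\tau Y$, and in particular $\textup{Hom}(W,\tau Y)\neq 0$; one then plays this off against the vanishing $\textup{Hom}(Y,\tau Y)=0$ enjoyed by directing modules (equivalently $\textup{Ext}^1(Y,Y)=0$) and against the choice of $W\in\Sigma$ to derive a contradiction. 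Without an ingredient of this type, the argument for~(ii) does not go through.
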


The existence of slices is used to characterize tilted algebras in the following way. 

\begin{thm}\label{2.7} \cite{R}
Let {\upshape $C=\text{End}_A T$} be a tilted algebra.  Then  the $C$-module {\upshape $\text{Hom}_A(T,DA)$} forms a slice in {\upshape $\text{mod}\,C$}.  Conversely, any slice in any module category is obtained in this way. 
\end{thm}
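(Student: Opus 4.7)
The plan is to split Theorem~\ref{2.7} into the two implications and handle them separately, with the Brenner--Butler theorem (Theorem~\ref{2.1}) serving as the main engine for transporting structure between $\textup{mod}\,A$ and $\textup{mod}\,C$.

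For the forward direction, I first observe that $DA$ is an injective cogenerator of $\textup{mod}\,A$. Since $A$ is hereditary, $\textup{Ext}_A^1(T,DA)=0$, so $DA\in\mathcal{T}(T)$, and by Theorem~\ref{2.1}(a) the functor $\textup{Hom}_A(T,-)$ sends $DA$ into $\mathcal{Y}(T)$ and restricts to a full, faithful equivalence on the subcategory $\textup{add}\,DA \to \textup{add}\,\textup{Hom}_A(T,DA)$. Writing $DA=\bigoplus_j I_j$ with $I_j$ indecomposable injective, I would now check the four slice axioms for $\Sigma:=\textup{Hom}_A(T,DA)$ one by one. Sincerity follows because $DA$ is an injective cogenerator, hence $\textup{Hom}_A(T_i,DA)\neq 0$ for each indecomposable summand $T_i$ of $T$; via the equivalence and the description of the indecomposable projectives of $C$ as $\textup{Hom}_A(T,T_i)$, this gives $\textup{Hom}_C(P,\Sigma)\neq 0$ for every indecomposable projective $P$. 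Path convexity follows from the fact that $\mathcal{Y}(T)$ is closed under predecessors (Proposition~\ref{2.2}(f)) together with the convexity of $DA$ in $\textup{mod}\,A$: any path in $\textup{mod}\,C$ with endpoints in $\Sigma$ must stay in $\mathcal{Y}(T)$, pull back through the Brenner--Butler equivalence to a path in $\textup{mod}\,A$ with endpoints in $\textup{add}\,DA$, and such a path lies entirely in $\textup{add}\,DA$ since injectives are convex in a hereditary category. Conditions (iii) and (iv) on irreducible morphisms and $\tau$-orbits transport along the same equivalence, using the compatibility of $\textup{Hom}_A(T,-)$ with irreducible maps between modules in $\mathcal{T}(T)$ and the well-known relation between $\tau_C$ on $\mathcal{Y}(T)$ and $\tau_A$ on $\mathcal{T}(T)$; the fact that $\tau_A^{-1}I_j=0$ for each injective $I_j$ is exactly what is needed to rule out $\tau_C^{-1}$ of a module of $\Sigma$ lying in $\Sigma$.

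For the converse, suppose $\Sigma$ is a slice in $\textup{mod}\,\Lambda$ and set $T_\Sigma=\bigoplus_{S\in\Sigma} S$. I would show that $T_\Sigma$ is a tilting $\Lambda^{\textup{op}}$-module and that $A:=\textup{End}_\Lambda(T_\Sigma)$ is hereditary with $\Lambda\cong\textup{End}_A(DT_\Sigma)$ and $\Sigma\cong\textup{Hom}_A(DT_\Sigma,DA)$. The key steps are: using axioms (i) and (ii) to prove $\textup{Ext}^1_\Lambda(T_\Sigma,T_\Sigma)=0$ and that $T_\Sigma$ has the right number of summands; using axiom (iv) together with sincerity to produce the short exact sequence linking $D\Lambda$ to $\textup{add}\,T_\Sigma$ (and dually for $\Lambda$); and using axiom (iii) to show that the quiver of $A$ is acyclic. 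With $T_\Sigma$ tilting, the Brenner--Butler theorem gives an equivalence $\mathcal{Y}(T_\Sigma)\simeq \mathcal{T}(T_\Sigma)\subseteq \textup{mod}\,A$; checking that $A$ is actually hereditary then reduces to showing that $\mathcal{X}(T_\Sigma)=0$, i.e.\ that every indecomposable $\Lambda$-module lies in $\mathcal{Y}(T_\Sigma)$, which follows from axiom (ii) and the path-closure of $\mathcal{Y}(T_\Sigma)$ once one knows that every indecomposable $\Lambda$-module has a path to or from $\Sigma$.

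The hardest step is the converse, and within it the crux is establishing that the algebra $A=\textup{End}_\Lambda(T_\Sigma)$ is hereditary: the slice axioms are purely combinatorial conditions on the Auslander--Reiten quiver of $\Lambda$, but hereditariness of $A$ is a global homological statement. I expect to handle this by combining axiom (iv) with the Brenner--Butler exchange between $\textup{Ext}^1_A$ and $\textup{Tor}^C_1$ to control $\textup{gl.dim}\,A$, and by using the sincerity condition (i) to rule out the existence of $A$-modules of projective dimension $\geq 2$.
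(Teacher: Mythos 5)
The paper does not supply a proof of Theorem~\ref{2.7}; it is quoted from Ringel's book \cite{R}, so there is no internal argument to compare your proposal against. I can only judge the proposal on its own terms.

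Your forward direction is a reasonable sketch. The observation that makes path-convexity (axiom (ii)) work is exactly the one you isolate: over a hereditary algebra, if $f\colon I\to M$ is a nonzero morphism from an injective to an indecomposable, then $\operatorname{Im} f$ is a quotient of $I$ and hence injective, hence a nonzero direct summand of $M$, forcing $M$ itself to be injective; thus the indecomposable injectives of $A$ are closed under successors, and this property transports across the Brenner--Butler equivalence into $\mathcal{Y}(T)$. For axioms (iii) and (iv) you would additionally need the explicit description of the injective $C$-modules in Proposition~\ref{2.4} and the connecting lemma relating $\tau_C$ on $\mathcal{Y}(T)$ with $\tau_A$ on $\mathcal{T}(T)$; you gesture at these but they do the real work there.

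The converse, which you correctly flag as the hard part, has a genuine gap. A small point first: $T_\Sigma=\bigoplus_{S\in\Sigma}S$ is a right $\Lambda$-module and the slice axioms make it a tilting $\Lambda$-module, not a tilting $\Lambda^{\textup{op}}$-module; the side-switching happens when you pass to $DT_\Sigma$ as a module over $A=\operatorname{End}_\Lambda(T_\Sigma)$. The serious problem is the proposed reduction of hereditariness of $A$ to "$\mathcal{X}(T_\Sigma)=0$, i.e.\ every indecomposable $\Lambda$-module lies in $\mathcal{Y}(T_\Sigma)$." The torsion pair $(\mathcal{X}(T_\Sigma),\mathcal{Y}(T_\Sigma))$ attached to a tilting $\Lambda$-module $T_\Sigma$ lives in $\operatorname{mod}\,\operatorname{End}_\Lambda(T_\Sigma)=\operatorname{mod}\,A$, not in $\operatorname{mod}\,\Lambda$, so the phrasing does not parse; and even after repairing the bookkeeping and passing to the torsion pair in $\operatorname{mod}\,\Lambda$ coming from the tilting $A$-module $DT_\Sigma$, the condition ``$\mathcal{X}=0$'' is simply not equivalent to $A$ being hereditary. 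Once the theorem is proved, $\Lambda$ is a tilted algebra of global dimension typically equal to $2$, and the induced torsion pair $(\mathcal{X},\mathcal{Y})$ in $\operatorname{mod}\,\Lambda$ is non-trivial on both sides whenever $\Lambda$ is not itself hereditary. So the vanishing you propose cannot hold in the cases of interest, and the key step of the converse, establishing that $A=\operatorname{End}_\Lambda(T_\Sigma)$ is hereditary, is not actually supplied by your plan; it requires a direct argument from the slice axioms about projective or injective dimensions of $A$-modules, not a torsion-theoretic vanishing.
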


The following notion of local slices has been introduced in \cite{ABS2} in the context of  cluster-tilted algebras. 

\begin{defn}
A {\em local slice} $\Sigma$ in $\Gamma(\text{mod}\,\Lambda)$ is a set of indecomposable $\Lambda$-modules inducing a connected full subquiver of $\Gamma(\text{mod}\,\Lambda)$ such that 
\begin{enumerate}[(i)]
\item If $X\in\Sigma$ and $X\rightarrow Y$ is an arrow in $\Gamma(\text{mod}\,\Lambda)$ then either $Y\in \Sigma$ or $\tau Y \in \Sigma$.
\item  If $Y\in \Sigma$ and $X\rightarrow Y$ is an arrow in $\Gamma(\text{mod}\,\Lambda)$ then either $X\in \Sigma$ or $\tau^{-1} X \in \Sigma$.
\item  For every sectional path $X=X_0\rightarrow X_1\rightarrow \dots \rightarrow X_s=Y$ in $\Gamma(\text{mod}\,\Lambda)$ with $X, Y \in \Sigma$ we have $X_i\in \Sigma$, for $i=0,1,\dots , s$. 
\item  The number of indecomposables in $\Sigma$ equals the number of nonisomorphic summands of $T$, where $T$ is a tilting $\Lambda$-module.  
\end{enumerate}
\end{defn}

\begin{remark}
The definition of a local slice makes sense if we replace the algebra $\Lambda$ by a cluster category $\mathcal{C}_A$, and then consider objects of $\mathcal{C}_A$ instead of $\Lambda$-modules. 
\end{remark}

There is a relationship between tilted and cluster-tilted algebras given in terms of slices and local slices.   Given a $\Lambda$-module $M$ let  $\text{Ann}_{\Lambda} M = \{a\in\Lambda\mid \, Ma = 0\}$ be its \emph{right annihilator}. 

\begin{thm}\label{2.8}\cite{ABS2}
Let $C$ be a tilted algebra and $B$ be the corresponding cluster-tilted algebra.  Then any slice in {\upshape $\text{mod}\,C$} embeds as a local slice in {\upshape $\text{mod}\,B$} and any local slice $\Sigma$ in {\upshape $\text{mod}\,B$} arises in this way.  Moreover, $C=B/\textup{Ann}_B \Sigma$. 
\end{thm}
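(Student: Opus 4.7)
The plan is to exploit the surjection $\pi\colon B\twoheadrightarrow C$ from the remark after Theorem \ref{2.5} (with kernel $E$), which identifies $\textup{mod}\,C$ with the full subcategory of $\textup{mod}\,B$ consisting of modules annihilated by $E$, and then verify conditions (i)--(iv) of a local slice directly.

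For the forward direction, given a slice $\Sigma$ in $\textup{mod}\,C$, which by Theorem \ref{2.7} has the form $\Sigma=\textup{Hom}_A(T,DA)$, I would view $\Sigma$ as a subquiver of $\Gamma(\textup{mod}\,B)$ via $\pi$. The cardinality condition (iv) is immediate, since $|\Sigma|$ equals the number of simple $C$-modules, which equals $|Q_0|$, the number of indecomposable summands of the cluster-tilting object defining $B$. Conditions (i) and (ii) require comparing $\tau_B$ with $\tau_C$ on slice elements: when an arrow $X\to Y$ in $\Gamma(\textup{mod}\,B)$ ends at $Y\in\Sigma$, I would check that either $X$ lies in $\textup{mod}\,C$ and coincides with the corresponding AR-predecessor in $\Gamma(\textup{mod}\,C)$, or $X$ lies outside $\textup{mod}\,C$ but then $\tau_B X\in\Sigma$; a dual analysis handles (ii). The sectional path condition (iii) should reduce to the analogous sectional property for the slice in $\textup{mod}\,C$, using that sectional paths between slice modules cannot escape the relevant AR-component of $\textup{mod}\,C$.

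For the reverse direction, given a local slice $\Sigma$ in $\Gamma(\textup{mod}\,B)$, I would lift $\Sigma$ via the Buan-Marsh-Reiten equivalence (Theorem \ref{2.5}) to a set of indecomposable objects in $\mathcal{C}_A/\textup{add}\,\tau T$. Choosing canonical preimages in a fundamental domain of the derived category $\mathcal{D}^b(\textup{mod}\,A)$, these objects should form a slice in the derived category, whose direct sum is a tilting $A$-module $T'$. Setting $C'=\textup{End}_A T'$, the module $\textup{Hom}_A(T',DA)$ is a slice in $\textup{mod}\,C'$ by Theorem \ref{2.7}, and one verifies that $C'\cong B/\textup{Ann}_B\Sigma$ and that the two slices agree as subquivers of $\Gamma(\textup{mod}\,B)$.

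Finally, the moreover statement $C=B/\textup{Ann}_B\Sigma$ follows from sincerity: any slice in $\textup{mod}\,C$ is sincere, so $\textup{Ann}_C\Sigma=0$; since $E$ kills every $C$-module, one has $\textup{Ann}_B\Sigma=E=\ker\pi$, giving $B/\textup{Ann}_B\Sigma=B/E=C$. The main obstacle will be the reverse direction, specifically showing that a local slice in $\textup{mod}\,B$ lifts to a genuine tilting module over the hereditary algebra $A$; the needed $\textup{Ext}^1_A$-vanishing should arise by translating local slice conditions (i)--(iii) into $\textup{Hom}_{\mathcal{C}_A}$-vanishing statements via the 2-Calabi-Yau property, and then back into the derived category. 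Verifying that the chosen lifts are simultaneously compatible (rather than being spread across different shifts in $\mathcal{D}^b(\textup{mod}\,A)$) is the most delicate point.
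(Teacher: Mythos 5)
The paper does not prove this statement; it is quoted verbatim from \cite{ABS2} (Assem--Br\"ustle--Schiffler, \emph{Cluster-tilted algebras and slices}), so there is no in-text proof to compare your proposal against. Evaluating it on its own merits, the broad strategy --- identify $\textup{mod}\,C$ with the $E$-annihilated part of $\textup{mod}\,B$, push a slice through $\pi$, and go back through the Buan--Marsh--Reiten equivalence --- is sensible and close in spirit to what \cite{ABS2} actually does. But as written there are real gaps at exactly the places where the work lives.

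First, in the ``moreover'' step you go from ``$\Sigma$ is sincere'' to ``$\textup{Ann}_C\Sigma=0$,'' but sincere (every simple occurs as a composition factor, equivalently $\textup{Hom}_C(P,\Sigma)\neq 0$ for all projectives $P$) does not imply faithful in general. What saves the statement is that a slice module is a tilting $C$-module, hence faithful; you need to invoke that, not sincerity. Once you have $\textup{Ann}_C\Sigma=0$, the rest of that paragraph ($\textup{Ann}_B\Sigma=E$, so $B/\textup{Ann}_B\Sigma=C$) is fine.

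Second, the forward direction glosses over the comparison of almost split sequences: you write that you would ``check'' that an arrow $X\to Y$ in $\Gamma(\textup{mod}\,B)$ with $Y\in\Sigma$ either stays inside $\textup{mod}\,C$ or satisfies $\tau_BX\in\Sigma$, but this check is precisely the nontrivial content. Nothing a priori prevents $\Gamma(\textup{mod}\,B)$ from having new arrows into a slice module that are invisible in $\Gamma(\textup{mod}\,C)$. One needs results of the kind proved in Section~\ref{sect 4} of this paper --- in particular that $\tau_BM\cong\tau_CM$ and $\tau^{-1}_BM\cong\tau^{-1}_CM$ for $M$ on a slice (Corollary~\ref{4.5}, itself resting on Lemma~\ref{3.8} and Proposition~\ref{4.3}), together with an analysis of the middle terms of almost split sequences under the split extension (\cite{AZ}). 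Without something of that strength, conditions (i)--(iii) of a local slice are asserted rather than proved.

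Third, in the reverse direction you correctly flag the delicate point, but the needed input is more specific than ``translate local slice conditions into $\textup{Ext}^1$-vanishing.'' You first need to know the local slice avoids $\textup{add}\,\tau T$ so that the lift under $\textup{Hom}_{\mathcal{C}_A}(T,-)$ is well defined and bijective (this is the content of Lemma~\ref{lemls}); then you need to show the lift is a local slice in $\mathcal{C}_A$ and that such a thing always fits inside a single copy of $\textup{mod}\,A'$ for a suitable hereditary $A'$. The last step is where your sketch is weakest: it is not automatic that a local slice in the cluster category sits in one heart, and the argument in \cite{ABS2} uses the preprojective/preinjective structure of the transjective component in an essential way. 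As it stands, your outline identifies the right ingredients but does not yet close the loop on any of the three hard steps.
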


The existence of local slices in a cluster-tilted algebra gives rise to the following definition.  The unique connected component of $\Gamma(\text{mod}\,B)$  that contains local slices is called the {\em transjective} component.  In particular if $B$ is of finite representation type then the transjective component is the entire Auslander-Reiten quiver of mod$\,B$. In this case there is the following statement. 

\begin{thm}\label{2.9}\cite{ABS2}
Let $B$ be a representation-finite cluster-tilted algebra.  Then any indecomposable $B$-module lies on a local slice. 
\end{thm}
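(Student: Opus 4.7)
My plan is to reduce the problem to the cluster category using the Buan--Marsh--Reiten equivalence (Theorem \ref{2.5}) and then appeal to Theorem \ref{2.8} to translate complete slices from tilted algebras into local slices in $\operatorname{mod} B$. Since $B$ is representation-finite, write $B=\operatorname{End}_{\mathcal{C}_A}(T)$ for some cluster-tilting object $T$ in the cluster category of a hereditary algebra $A$; the representation-finiteness of $B$ forces $A$ to be of Dynkin type. By Theorem \ref{2.5}, any indecomposable $M\in\operatorname{ind} B$ is of the form $M\cong\operatorname{Hom}_{\mathcal{C}_A}(T,X)$ for a unique indecomposable $X\in\mathcal{C}_A$ with $X\notin\operatorname{add}(\tau T)$.

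The core combinatorial step is then to produce, for this particular $X$, a cluster-tilting object $T'=X\oplus X_2\oplus\cdots\oplus X_n$ of $\mathcal{C}_A$ such that the summands of $T'$ span a local slice of $\mathcal{C}_A$, i.e.\ correspond to a complete slice in $\operatorname{mod} A'$ for some hereditary algebra $A'$ derived-equivalent to $A$. In Dynkin type such a $T'$ always exists: every indecomposable of $\mathcal{C}_A$ appears as a summand of a cluster-tilting object whose summands form a transjective cross-section, which is exactly a complete slice in some $\operatorname{mod} A'$. Writing $C'=\operatorname{End}_{A'}(T')$ for the resulting tilted algebra, Theorem \ref{2.7} gives a slice $\Sigma'$ in $\operatorname{mod} C'$ containing the image of $X$, namely the slice $\operatorname{Hom}_{A'}(T',DA')$.

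Now by the Assem--Br\"ustle--Schiffler construction recalled in Section 2.3, $C'\ltimes\operatorname{Ext}^2_{C'}(DC',C')$ is again a cluster-tilted algebra, and since the cluster-tilting object $T'$ of $\mathcal{C}_A$ determines the same cluster-tilted algebra as $T$ does (up to isomorphism, because both equal $B$ — here I use that $B$ is fixed but the derived-equivalent presentation of $\mathcal{C}_A$ by $A'$ yields another cluster-tilting description of $B$), we have $B\cong C'\ltimes\operatorname{Ext}^2_{C'}(DC',C')$. Theorem \ref{2.8} then embeds the slice $\Sigma'$ in $\operatorname{mod} C'$ as a local slice in $\operatorname{mod} B$, and this local slice contains $M$ by construction.

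The main obstacle is the middle step: guaranteeing that the chosen complete slice through $X$ in some derived-equivalent hereditary category completes to a cluster-tilting object of $\mathcal{C}_A$ whose endomorphism algebra is \emph{the same} $B$. One needs to argue that different hereditary models $A'$ of the cluster category $\mathcal{C}_A$ (obtained by iterated BGP reflections, equivalently by passing between complete slices) all produce, via the relation-extension construction, the same cluster-tilted algebra $B$. This is where Dynkin type (i.e.\ representation-finiteness of $B$) is essential, since then the transjective component exhausts $\Gamma(\operatorname{mod} B)$ and the combinatorics of slices in $\mathcal{C}_A$ is finite and transitive under mutation, so no infinite tubes or non-transjective indecomposables obstruct the argument.
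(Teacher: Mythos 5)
The paper does not prove Theorem \ref{2.9}; it is quoted from \cite{ABS2}, so there is no in-paper proof to compare against. Judging your proposal on its own merits, there is a genuine error in the middle step, and it is not a small gap.

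You construct a cluster-tilting object $T'$ of $\mathcal{C}_A$ containing $X$ as a summand, such that the summands of $T'$ form a complete slice in some $\operatorname{mod}A'$, and then assert that $T'$ ``determines the same cluster-tilted algebra as $T$ does (up to isomorphism, because both equal $B$).'' That claim is false: different cluster-tilting objects in the same cluster category give cluster-tilted algebras that are only mutation-equivalent, not isomorphic. Concretely, if the summands of $T'$ form a slice in $\operatorname{mod}A'$, then $C'=\operatorname{End}_{A'}(T')$ is hereditary, so $\operatorname{Ext}^2_{C'}(DC',C')=0$ and $C'\ltimes\operatorname{Ext}^2_{C'}(DC',C')\cong C'$ is again hereditary; your conclusion $B\cong C'\ltimes\operatorname{Ext}^2_{C'}(DC',C')$ would force $B$ to be hereditary, which is false for most Dynkin-type cluster-tilted algebras. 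The tilted algebras $C'$ whose relation extension is the \emph{fixed} algebra $B$ are exactly the quotients $B/\operatorname{Ann}_B\Sigma$ over local slices $\Sigma$ in $\operatorname{mod}B$ (Theorem \ref{2.8}); you cannot obtain them from arbitrary slices in $\mathcal{C}_A$, since that is precisely what you are trying to construct. Your argument is therefore circular in disguise.

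The non-circular route stays at the level of the cluster category: with $M=\operatorname{Hom}_{\mathcal{C}_A}(T,X)$, one needs a local slice $\widetilde\Sigma$ of $\mathcal{C}_A$ containing $X$ that avoids $\operatorname{add}(\tau_{\mathcal{C}_A}T)$; then Lemma \ref{lemls} directly shows $\operatorname{Hom}_{\mathcal{C}_A}(T,\widetilde\Sigma)$ is a local slice in $\operatorname{mod}B$ through $M$, and the tilted algebra $C'$ and its slice emerge \emph{a posteriori} via Theorem \ref{2.8}. The real content is the existence of such a $\widetilde\Sigma$ through an arbitrary $X\notin\operatorname{add}(\tau T)$ in Dynkin type (compare the proof of Theorem \ref{7.1}, where $\Sigma(\rightarrow X)$ and Proposition \ref{2.6} are used, though the sincerity argument there relies on infinite representation type and must be replaced in the Dynkin case). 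Your proposal does not address the $\tau T$-avoidance at all, which is the other essential point Lemma \ref{lemls} requires.
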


We end this section with a lemma that we will need later.
\begin{lem}
 \label{lemls}
 Let $\widetilde \Sigma$ be a local slice in the cluster category $\mathcal{C}_A$. Then $\Sigma=\textup{Hom}_{\mathcal{C}_A}(T,\widetilde \Sigma)$ is a local slice in $\textup{mod}\,B$ if and only if $\widetilde \Sigma$ contains no summand of $\tau_{\mathcal{C}_A} T$.
\end{lem}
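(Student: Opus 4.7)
The plan is to exploit the BMR equivalence of Theorem~\ref{2.5}, which identifies $\mathcal{C}_A/\textup{add}(\tau T)$ with $\textup{mod}\,B$ via $\textup{Hom}_{\mathcal{C}_A}(T,-)$, to transfer the local slice structure between $\widetilde\Sigma$ and $\Sigma$. The key auxiliary fact I would use is the well-known combinatorial description of $\Gamma(\textup{mod}\,B)$ obtained from $\Gamma(\mathcal{C}_A)$ by deleting the vertices in $\textup{add}(\tau_{\mathcal{C}_A} T)$: irreducible morphisms between objects outside $\textup{add}(\tau T)$ correspond to irreducible morphisms in $\textup{mod}\,B$, and $\tau_B$ corresponds to $\tau_{\mathcal{C}_A}$ where both are defined.

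For the ``only if'' direction I would argue by contrapositive. If an indecomposable summand $\tau T_i$ of $\tau T$ lies in $\widetilde\Sigma$, then by Theorem~\ref{2.5} we have $\textup{Hom}_{\mathcal{C}_A}(T,\tau T_i)=0$, so $|\Sigma|<|\widetilde\Sigma|=|Q_0|$. Since a local slice in $\textup{mod}\,B$ has exactly $|Q_0|$ indecomposables, condition (iv) fails and $\Sigma$ is not a local slice.

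For the ``if'' direction, assume $\widetilde\Sigma\cap\textup{add}(\tau T)=\emptyset$. The restriction of $\textup{Hom}_{\mathcal{C}_A}(T,-)$ to $\widetilde\Sigma$ is then fully faithful and sends non-isomorphic indecomposables to non-isomorphic indecomposable $B$-modules, giving $|\Sigma|=|\widetilde\Sigma|=|Q_0|$, which is condition (iv). Connectedness of the induced subquiver on $\Sigma$ follows because arrows between objects of $\widetilde\Sigma$ descend to arrows in $\Gamma(\textup{mod}\,B)$ under the functor. For conditions (i), (ii), (iii), I would pull arrows in $\Gamma(\textup{mod}\,B)$ starting or ending in $\Sigma$ back to $\Gamma(\mathcal{C}_A)$ and apply the corresponding local slice conditions for $\widetilde\Sigma$, using the compatibility $\tau_B\,\textup{Hom}_{\mathcal{C}_A}(T,-)\cong\textup{Hom}_{\mathcal{C}_A}(T,-)\,\tau_{\mathcal{C}_A}$ to translate occurrences of $\tau$ between the two AR quivers.

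The main obstacle is the case in which an arrow $X\to Y$ in $\Gamma(\textup{mod}\,B)$ does not lift to a single arrow in $\Gamma(\mathcal{C}_A)$, but rather to a length-two path $\widetilde X\to\tau T_i\to\widetilde Y$ through a deleted vertex. In this situation I would first apply condition (i) for $\widetilde\Sigma$ to the arrow $\widetilde X\to\tau T_i$; since $\tau T_i\notin\widetilde\Sigma$ by hypothesis, we must have $\tau_{\mathcal{C}_A}^{-1}\tau T_i=T_i\in\widetilde\Sigma$. The AR triangle at $\tau T_i$ then produces an arrow $\widetilde Y\to T_i$ in $\Gamma(\mathcal{C}_A)$, and applying condition (ii) for $\widetilde\Sigma$ at $T_i$ forces $\widetilde Y\in\widetilde\Sigma$ or $\tau_{\mathcal{C}_A}^{-1}\widetilde Y\in\widetilde\Sigma$, which gives $Y\in\Sigma$ or $\tau_B^{-1}Y\in\Sigma$, as required. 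The symmetric case and condition (iii) (by lifting sectional paths through the mesh structure around $\textup{add}(\tau T)$) are handled analogously.
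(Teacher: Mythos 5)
Your first sentence identifies the same core observation that the paper's proof rests on: $\text{Hom}_{\mathcal{C}_A}(T,-)$ restricted to $\widetilde\Sigma$ is a bijection onto $\Sigma$ exactly when $\widetilde\Sigma$ avoids $\text{add}(\tau_{\mathcal{C}_A}T)$. Your ``only if'' direction (cardinality drops, so condition (iv) fails) is fine, and your ``if'' direction starts the same way as the paper's. Where the two part ways: the paper at this point simply invokes \cite[Lemma 17]{ABS2}, which packages the transfer of the local-slice axioms through the BMR equivalence, whereas you attempt a direct verification of conditions (i)--(iv). That is a legitimate alternative, and if carried out correctly it is more self-contained, at the cost of length.

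However, your ``obstacle'' paragraph contains a genuine error and is also internally inconsistent with the auxiliary fact you state at the outset. You correctly assert that $\Gamma(\text{mod}\,B)$ is the full subquiver of $\Gamma(\mathcal{C}_A)$ on the vertices outside $\text{add}(\tau T)$, so that irreducible morphisms correspond; under that description, every arrow of $\Gamma(\text{mod}\,B)$ \emph{does} lift to a single arrow of $\Gamma(\mathcal{C}_A)$, and no arrow arises from a length-two path through a deleted vertex. The obstacle you set up to handle therefore never occurs. Worse, the way you handle it misapplies axiom (i) of the local slice definition: for the arrow $\widetilde X\to\tau T_i$ with $\widetilde X\in\widetilde\Sigma$, condition (i) gives $\tau T_i\in\widetilde\Sigma$ or $\tau_{\mathcal{C}_A}(\tau T_i)=\tau^2_{\mathcal{C}_A}T_i\in\widetilde\Sigma$ --- the axiom moves forward along $\tau$, not backward --- so you cannot conclude $T_i\in\widetilde\Sigma$, and the subsequent application of (ii) at $T_i$ collapses. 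Deleting the obstacle paragraph and relying consistently on the stated AR-quiver correspondence (together with the compatibility $\tau_B\text{Hom}_{\mathcal{C}_A}(T,-)\cong\text{Hom}_{\mathcal{C}_A}(T,-)\tau_{\mathcal{C}_A}$ on the non-deleted vertices) yields a correct direct argument; in effect you would be reproving the cited lemma of \cite{ABS2} rather than quoting it.
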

\begin{proof}
 $\text{Hom}_{\mathcal{C}_A}(T,-): \widetilde\Sigma\to\Sigma$ is a bijection if and only if $\widetilde \Sigma$ contains no summand of $\tau_{\mathcal{C}_A} T$.
Now the statement follows from \cite[Lemma 17]{ABS2}.
\end{proof}

\section{Induction and Coinduction Functors}\label{sect 3}

In this section we define two functors called induction and coinduction and describe some general results about them.  Suppose there are two $k$-algebras $C$ and $B$ with the property that $C$ is a subalgebra of $B$ and they share the same identity.  Then there is a general construction via the tensor product, also known as \emph{extension of scalars}, that sends a $C$-module to a particular $B$-module.  We give a precise definition below.
\begin{defn}
Let $C$ be a subalgebra of $B$, such that $1_C=1_B$, then 
$$-\otimes _C B:  \:  \text{mod}\,C \rightarrow \text{mod}\,B$$ 
is called the {\em{induction functor}}, and dually  
$$D(B\otimes_C D-): \: \text{mod}\,C \rightarrow \text{mod}\,B$$ 
is called the {\em{coinduction functor}}.   Moreover, given $M\in$ mod$\,C$ the corresponding {\em{induced module}} is defined to be $M\otimes _C B$, and the {\em{coinduced module}} is defined to be $D(B\otimes _C DM)$.  
\end{defn}
 
First observe that both functors are covariant.  The induction functor is right exact, while the coinduction functor is left exact.  Now consider the following lemma. 
\begin{lem} \label{*}
Let $C$ and $B$ be two $k$-algebras and $N$ a $C$-$B$-bimodule, then {\upshape{
$$M\otimes _ C N \cong D \text{Hom} _C ( M, DN)$$}}
 as $B$-modules for all {\upshape{$M \in \text{mod} \, C$}}. 
\end{lem}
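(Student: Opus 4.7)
The plan is to combine the tensor-hom adjunction with finite-dimensional double duality. Because $N$ is a $C$-$B$-bimodule, its $k$-dual $DN=\textup{Hom}_k(N,k)$ is naturally a $B$-$C$-bimodule: the right $C$-action on $DN$ comes from the left $C$-action on $N$, and the left $B$-action on $DN$ comes from the right $B$-action on $N$. Consequently $\textup{Hom}_C(M,DN)$ inherits a left $B$-module structure, and $D\textup{Hom}_C(M,DN)$ becomes a right $B$-module, matching the right $B$-module structure on $M\otimes_C N$.

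The key step is the standard adjunction isomorphism
\[\textup{Hom}_C\bigl(M,\textup{Hom}_k(N,k)\bigr)\;\xrightarrow{\ \sim\ }\;\textup{Hom}_k(M\otimes_C N,k),\qquad \varphi\longmapsto \bigl(m\otimes n\mapsto \varphi(m)(n)\bigr),\]
with inverse sending $\psi$ to the map $m\mapsto \psi(m\otimes -)$. I would first verify well-definedness (the displayed rule is $C$-balanced because $\varphi$ is $C$-linear and the right $C$-action on $DN$ is inherited from the left $C$-action on $N$), and then check that this isomorphism intertwines the two left $B$-module structures: on the source, the action coming from the left $B$-action on $DN$, and on the target, the action defined by $(b\cdot\psi)(m\otimes n)=\psi(m\otimes nb)$. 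Applying $D$ to both sides then produces a right $B$-module isomorphism $D\textup{Hom}_C(M,DN)\cong D^2(M\otimes_C N)$.

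Finally I would invoke the canonical evaluation map $M\otimes_C N\to D^2(M\otimes_C N)$, which is an isomorphism of right $B$-modules as soon as $M\otimes_C N$ is finite-dimensional over $k$; this is guaranteed by the paper's standing hypothesis that all algebras, and hence $M$ and (in the application) $N=B$, are finite-dimensional over $k$. Composing these isomorphisms gives the claim.

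The only real obstacle is bookkeeping: each application of $D$ swaps a left action for a right action and vice versa, and one must check that the composition of all these swaps sends the right $B$-module structure on $M\otimes_C N$ (via the right $B$-action on $N$) to the right $B$-module structure on $D\textup{Hom}_C(M,DN)$. There is no deeper content beyond the tensor-hom adjunction and $k$-linear double duality.
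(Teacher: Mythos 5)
Your argument is exactly the paper's: the two ingredients are the tensor--hom adjunction $\textup{Hom}_C(M,\textup{Hom}_k(N,k))\cong\textup{Hom}_k(M\otimes_C N,k)$ and finite-dimensional double duality $D^2\cong\textup{id}$, and you simply apply them in the opposite order from the paper's one-line chain $M\otimes_C N\cong D\textup{Hom}_k(M\otimes_C N,k)\cong D\textup{Hom}_C(M,DN)$. Your extra attention to how the $B$-module structures are carried through each step is a reasonable elaboration of what the paper leaves implicit, but there is no substantive difference in approach.
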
  
\begin{proof}
$M\otimes_C N \cong D \text{Hom}_k( M\otimes_C N, k)\cong D\text{Hom}_C(M, \text{Hom}_k (N, k))\cong D\text{Hom}_C (M, DN)$.  
\end{proof} 

The next proposition describes an alternative definition of these functors, and we will use these two descriptions interchangeably.
 
\begin{prop}\label{3.3}
Let $C$ be a subalgebra of $B$ such that $1_C=1_B$, then  for every $M\in$ {\upshape{mod}}$\,C$\\
\upshape{ \indent (a) $M\otimes_C B\cong D\text{Hom}_C(M,DB)$.\\
\indent (b) $D(B\otimes_C DM)\cong \text{Hom}_C(B, M)$.}
\end{prop}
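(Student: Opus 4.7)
The plan is to prove both parts by a direct application of standard tensor-hom adjunction, with Lemma \ref{*} doing essentially all the work for part (a).

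For part (a), I would simply invoke Lemma \ref{*} with $N=B$. Since $C$ is a subalgebra of $B$ with $1_C=1_B$, the algebra $B$ carries a natural $C$-$B$-bimodule structure: the left $C$-action comes from the inclusion $C\hookrightarrow B$ followed by left multiplication, and the right $B$-action is right multiplication. Substituting this $N$ into Lemma \ref{*} yields $M\otimes_C B\cong D\mathrm{Hom}_C(M,DB)$ as right $B$-modules, which is exactly the claim.

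For part (b), I would argue in the same spirit as the proof of Lemma \ref{*}, but starting from the other side. View $B$ as a $B$-$C$-bimodule (left multiplication and right action via the inclusion), and note that $DM$ is a left $C$-module, so $B\otimes_C DM$ is a left $B$-module and $D(B\otimes_C DM)$ is a right $B$-module. Then I chain together the standard isomorphisms
\[
D(B\otimes_C DM)=\mathrm{Hom}_k(B\otimes_C DM,k)\cong \mathrm{Hom}_C\bigl(B,\mathrm{Hom}_k(DM,k)\bigr)\cong \mathrm{Hom}_C(B,D^2M)\cong \mathrm{Hom}_C(B,M),
\]
where the first isomorphism is the tensor-hom adjunction (using that $B$ is a right $C$-module and $DM$ is a left $C$-module), and the last uses that $M$ is finite-dimensional so $D^2M\cong M$ canonically as right $C$-modules. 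The right $B$-module structure on $\mathrm{Hom}_C(B,M)$ arising from the left $B$-action on $B$, i.e.\ $(f\cdot b)(b')=f(bb')$, matches the right $B$-module structure on $D(B\otimes_C DM)$ coming from the left $B$-action on $B\otimes_C DM$.

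The only work beyond writing down these canonical maps is bookkeeping: confirming at each step that the displayed vector-space isomorphism is actually a right $B$-module isomorphism. This is the one place where care is needed, but it follows from the naturality of the tensor-hom adjunction in the bimodule variable, so no obstruction is expected. There are no substantial difficulties; the proposition is essentially a formal consequence of Lemma \ref{*} together with the adjunction, so the proof should be very short.
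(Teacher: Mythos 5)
Your proof is correct and matches the paper's approach: the paper simply states that both parts follow from Lemma \ref{*} (whose proof is exactly the tensor-hom-adjunction chain you write out), and you make explicit how that chain specializes — directly to $N=B$ for part (a), and rerun with the roles $(M,N)\rightsquigarrow(B,DM)$ plus $D^2M\cong M$ for part (b).
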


\begin{proof}
Both parts follow from Lemma \ref{*}.
\end{proof}

Next we show some basic properties of these functors. 

\begin{prop} \label{3.4}
Let $C$ be a subalgebra of $B$ such that $1_C=1_B$.  If $e$ is an idempotent then\\
\indent {\upshape {(a)}} $(eC)\otimes _C B\cong eB$.\\
\indent{\upshape{(b)}} $D(B\otimes_C Ce)\cong DBe$. \\
In particular, if $P(i)$ and $I(i)$ are indecomposable projective and injective $C$-modules at vertex $i$, then $P(i)\otimes_C B$ and $D(B\otimes _C D I(i))$ are respectively indecomposable projective and injective $B$-modules at vertex $i$.  
\end{prop}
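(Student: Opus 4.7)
The argument is direct: prove each part by exhibiting an explicit natural multiplication map as the desired isomorphism of $B$-modules, and then specialize to primitive idempotents for the ``in particular'' statement.

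For part (a), the plan is to define $\mu : eC \otimes_C B \to eB$ by $\mu(ec \otimes b) = ecb$ and to verify directly that this is a well-defined right $B$-module homomorphism (the map is $C$-balanced since $\mu(ecc' \otimes b) = ecc'b = \mu(ec \otimes c'b)$ for $c' \in C$). The candidate inverse is $\psi : eB \to eC \otimes_C B$ given by $\psi(eb) = e \otimes b$, which is well-defined because $e = e \cdot 1 \in eC$. Then $\mu \circ \psi = \mathrm{id}$ is immediate, and $\psi \circ \mu(ec \otimes b) = \psi(ecb) = e \otimes cb = ec \otimes b$ using that one can move $c \in C$ across the tensor over $C$. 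For part (b), proceed analogously: the multiplication map $\nu : B \otimes_C Ce \to Be$, $\nu(b \otimes c'e) = bc'e$, is an isomorphism of left $B$-modules with inverse $be \mapsto b \otimes e$ by the same computation. Applying the standard duality $D$, which interchanges left and right module structures, then yields $D(B \otimes_C Ce) \cong D(Be) = DBe$ as right $B$-modules. Alternatively, both isomorphisms can be deduced from Proposition \ref{3.3} together with the standard computation $\mathrm{Hom}_C(eC, X) \cong eX$ for any $C$-module $X$, but the direct approach above is the cleanest.

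For the ``in particular'' statement, take $e = e_i$ to be a primitive idempotent of $C$ corresponding to vertex $i$. In the setting of interest, where $B = C \ltimes E$ with $E$ contained in the radical of $B$, the idempotent $e_i$ remains primitive in $B$ and labels the same vertex. Then $P(i) = e_iC$ and $I(i) = D(Ce_i)$, so part (a) gives $P(i) \otimes_C B \cong e_iB$, the indecomposable projective $B$-module at vertex $i$, and since $D\,I(i) = Ce_i$, part (b) gives $D(B \otimes_C D I(i)) \cong D(Be_i)$, the indecomposable injective $B$-module at vertex $i$. The whole argument is essentially routine bookkeeping with tensor products; the only point that is not purely formal is the identification of the primitive idempotents of $C$ with primitive idempotents of $B$ at matching vertices, which is what makes the labels $P(i)$ and $I(i)$ transport unambiguously from $\mathrm{mod}\,C$ to $\mathrm{mod}\,B$.
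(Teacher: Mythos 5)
Your proof is correct and follows essentially the same approach as the paper, which compresses the argument to the one-liner $(eC)\otimes_C B = e(C\otimes_C B)\cong eB$; your explicit multiplication map $\mu$ and its inverse $\psi$ are precisely the content of that identity. One minor imprecision: the stated reason that $\psi(eb)=e\otimes b$ is well-defined (namely $e\in eC$) only shows the formula lands in the right place, not that it is independent of the representative $b$; the actual reason is that if $eb_1=eb_2$ then $e\otimes b_1=e\otimes eb_1=e\otimes eb_2=e\otimes b_2$, moving $e\in C$ across the tensor — a step you in fact use implicitly when checking $\psi\circ\mu=\mathrm{id}$.
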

\begin{proof}
 Observe that $(eC)\otimes _C  B=e(C\otimes _C B)\cong eB$, and similarly we have $D(B\otimes_C (Ce))= D((B\otimes_C  C)e)\cong DBe$.  The rest of the proposition follows from above if we let $e=e_i$ be the primitive  idempotent given by the constant path at vertex $i$.  
\end{proof}

We can say more in the situation when $B$ is  a split extension of $C$.
  
\begin{defn}
Let $B$ and $C$ be two algebras.  We say {\em{$B$ is a split extension of $C$ by a nilpotent bimodule $E$}} if there exist algebra homomorphisms $\pi: B\to C$ and $\sigma: C \to B$ such that $\pi\sigma$ is the identity on $C$ and $E=\text{ker}\,\pi$ is a nilpotent (two-sided) ideal of $B$.   

In particular, there exists the following short exact sequence of $B$-modules. 
\begin{equation}\label{2}    \xymatrix{0\ar[r]&E\ar[r]^{i}&B \ar@<.5ex>[r]^{\pi}&C \ar@<.5ex>[l]^{\sigma}\ar[r]&0} \end{equation}
\end{defn}

\noindent For example, relation extensions are split extensions.

If $B$ is a split extension of $C$ then $\sigma$ is injective, which means $C$ is a subalgebra of $B$.  Also, $E$ is a $C$-$C$-bimodule, and we require $E$ to be nilpotent so that $1_B=1_C$.   Observe that $B\cong C\oplus E$ as $C$-modules, and there is an isomorphism of $C$-modules $M\otimes_C B\cong M\otimes _C C\oplus M\otimes _C E\cong M\oplus M\otimes_C E$.  Similarly, $D(B\otimes_C DM)\cong M\oplus D(E\otimes_C DM)$ as $C$-modules.  This shows that induction and coinduction of a module $M$ yields the same module $M$ plus possibly something else.  The next proposition shows a precise relationship between a given $C$-module and its image under the induction and coinduction functors. 

\begin{prop} \label{3.6}
Suppose $B$ is a split extension of $C$ by a nilpotent bimodule $E$, then for every  $M\in$ {\upshape{mod}}$\,C$ there exist two short exact sequences of $B$-modules:\\
\indent {\upshape{(a)}} \xymatrix{0\ar[r]&M\otimes_C E\ar[r]&M\otimes_C B\ar[r]& M\ar[r]&0}.\\
\indent {\upshape{(b)}} \xymatrix{0\ar[r]&M\ar[r]&D(B\otimes_C DM)\ar[r]& D(E\otimes_C DM)\ar[r]&0}.
\end{prop}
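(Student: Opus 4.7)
The plan is to apply the induction and coinduction functors to the defining short exact sequence of $B$-modules
\[ 0 \to E \to B \to C \to 0 \]
coming from the split extension $B = C \oplus E$. The key observation is that because $\sigma : C \to B$ is an algebra homomorphism with $\pi \sigma = \text{id}_C$, it is in particular a $C$-$C$-bimodule homomorphism splitting $\pi$, so this sequence is actually split as a sequence of $C$-$C$-bimodules. Consequently, any additive functor applied termwise to this sequence will preserve its exactness.

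For part (a), I will apply the induction functor $M \otimes_C -$ to the sequence above, obtaining a split short exact sequence
\[ 0 \to M \otimes_C E \to M \otimes_C B \to M \otimes_C C \to 0. \]
The right $B$-module structure on each term comes from the right $B$-action on the second tensor factor: on $B$ by right multiplication, on $E$ via the inclusion $E \hookrightarrow B$, and on $C$ via $\pi$. Finally, the canonical identification $M \otimes_C C \cong M$ as right $B$-modules (where $M$ is viewed as a $B$-module via $\pi$) will yield the statement in (a).

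For part (b), I will apply $- \otimes_C DM$ to the same sequence, with $DM$ regarded as a left $C$-module, to obtain a split short exact sequence of left $B$-modules
\[ 0 \to E \otimes_C DM \to B \otimes_C DM \to C \otimes_C DM \to 0. \]
Applying the exact duality $D$ then reverses the arrows and produces a short exact sequence of right $B$-modules, whose leftmost term satisfies $D(C \otimes_C DM) \cong D(DM) \cong M$, recovering the original module $M$ (as a $B$-module via $\pi$). This is exactly the statement in (b).

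I expect no real obstacle. The whole argument reduces to the observation that the algebra section $\sigma$ upgrades the short exact sequence of $B$-modules to one that is split at the $C$-$C$-bimodule level, which is precisely what is needed to make the otherwise only right-exact induction functor (and only left-exact coinduction functor) send this particular sequence to a short exact sequence. The only bookkeeping to watch is keeping track of left versus right $C$- and $B$-module structures on each tensor factor.
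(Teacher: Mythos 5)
Your proof is correct and follows essentially the same approach as the paper: apply the (co)induction functor to the short exact sequence $0\to E\to B\to C\to 0$. Where the paper justifies exactness of the result by noting $\textup{Tor}_1^C(M,C)=0$ (and dually $D\textup{Tor}_1^C(C,DM)=0$) because $C$ is projective as a $C$-module, you instead observe that $\sigma$ splits the sequence as $C$-$C$-bimodules so that any additive functor preserves its exactness; these are equivalent justifications leading to the same computation.
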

\begin{proof}
To show part (a) we apply $M\otimes_C -$ to the short exact sequence \eqref{2} and obtain the following long exact sequence 
$$\xymatrix{\text{Tor}^C_1(M,C)\ar[r]&M\otimes_C E\ar[r]& M\otimes _C B \ar[r]&M\otimes _C C\ar[r]&0}.$$ However, Tor$_1^C(M,C)=0$ since $C$ is a projective $C$-module, so part (a) follows. 
Similarly, to show part (b) we apply $D(-\otimes_C DM)$ to sequence \eqref{2}.  This yields a long exact sequence 
$$\xymatrix{0\ar[r]&D(C\otimes_C DM)\ar[r]&D(B\otimes_C DM)\ar[r]&D(E\otimes_C DM)\ar[r]&D\text{Tor}^C_1 (C,DM)}.$$
The last term in the sequence is again zero, which shows part (b). 
\end{proof}

Thus, in this situation each module is a quotient of its induced module and a submodule of its coinduced module.

\begin{prop} \label{3.7}
Suppose $B$ is a split extension of $C$ by a nilpotent bimodule $E$, then for every $M,N \in $ {\upshape{mod}}$\,C$ \\
 \indent{\upshape{(a)}} If $M \in \textup{ind}\,C$ then $M\otimes_C B$ and $D(B\otimes _C DM)$ lie in $\textup{ind}\,B$. \\
\indent{\upshape{(b)}}  $M\otimes _C B\cong N\otimes _C B$ if and only if $M\cong N$. \\
\indent{\upshape{(c)}}  $D(B\otimes _C  DM)\cong D(B\otimes _C  DN)$ if and only if $M\cong N$.
\end{prop}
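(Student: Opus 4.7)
The plan for (a) is to prove that $\textup{End}_B(M\otimes_C B)$ is local. I would first apply the tensor-hom adjunction
\[
\textup{End}_B(M\otimes_C B)\;\cong\;\textup{Hom}_C(M,(M\otimes_C B)|_C),
\]
and use the decomposition $B\cong C\oplus E$ of $C$-bimodules to see that this splits additively as $\textup{End}_C(M)\oplus \textup{Hom}_C(M,M\otimes_C E)$. Since $E$ is a two-sided ideal of $B$, the subspace $M\otimes_C E=(M\otimes_C B)\cdot E$ is a $B$-submodule, so passing to the quotient $(M\otimes_C B)/(M\otimes_C E)\cong M$ would produce a split surjective ring homomorphism $\eta\colon\textup{End}_B(M\otimes_C B)\to\textup{End}_C(M)$ with section $g\mapsto g\otimes\textup{id}_B$.

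The crux is to show $\ker\eta$ is nilpotent. Given $f_1,\ldots,f_n\in\ker\eta$, I have $f_n(M\otimes_C B)\subseteq M\otimes_C E$; by $B$-linearity, $f_{n-1}(M\otimes_C E)=f_{n-1}(M\otimes_C B)\cdot E\subseteq M\otimes_C E^2$, and iterating gives $(f_1\circ\cdots\circ f_n)(M\otimes_C B)\subseteq M\otimes_C E^n$, which vanishes once $E^n=0$. Hence $\ker\eta$ lies in the Jacobson radical, and since $\textup{End}_C(M)$ is local ($M$ being indecomposable), so is $\textup{End}_B(M\otimes_C B)$. For the coinduction half, I would dualize: apply the same argument to the left $B$-module $B\otimes_C DM$ with its filtration $B\otimes_C DM\supseteq E\otimes_C DM\supseteq\cdots$, then pass through the duality $D$.

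For (b), the forward direction is functoriality. For the converse, I would recover $M$ canonically from its induced module using
\[
(M\otimes_C B)\otimes_B C\;\cong\;M\otimes_C(B\otimes_B C)\;\cong\;M,
\]
where $C$ is viewed as a $B$-module through $\pi$; then any $B$-isomorphism $M\otimes_C B\cong N\otimes_C B$ descends to a $C$-isomorphism $M\cong N$. Part (c) follows dually: using Proposition \ref{3.3}(b) to identify the coinduced module with $\textup{Hom}_C(B,M)$, the adjunction
\[
\textup{Hom}_B(C,\textup{Hom}_C(B,M))\;\cong\;\textup{Hom}_C(B\otimes_B C,M)\;\cong\;M
\]
recovers $M$ from the coinduced module, giving the same conclusion.

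The main obstacle will be the nilpotency of $\ker\eta$ in (a); once the interaction between $B$-linear endomorphisms and the filtration $\{M\otimes_C E^k\}$ is established, everything else—the adjunction identities, the quotient arguments for (b) and (c), and the passage from ``$\ker\eta$ nilpotent'' to ``$\textup{End}_B(M\otimes_C B)$ local''—is formal ring theory.
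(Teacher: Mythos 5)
Your proof is correct. For (b) and (c) you take essentially the same route as the paper: recover $M$ from $M\otimes_C B$ via $(M\otimes_C B)\otimes_B C\cong M\otimes_C(B\otimes_B C)\cong M$, and dually for the coinduced module. (A small orientation slip in (c): the adjunction produces $\text{Hom}_C(C\otimes_B B,M)$ rather than $\text{Hom}_C(B\otimes_B C,M)$, but both tensor modules are isomorphic to $C$, so the conclusion is unaffected.) For (a) your route is genuinely different. The paper tensors a hypothetical decomposition $M\otimes_C B\cong M_1\oplus M_2$ with $-\otimes_B C$, uses indecomposability of $M$ to force one factor of the resulting decomposition of $M$ to vanish, and then argues the corresponding summand $M_1$ is zero. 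You instead prove directly that $\text{End}_B(M\otimes_C B)$ is local, by constructing a split surjective ring homomorphism $\eta$ onto the local ring $\text{End}_C(M)$ and showing $\ker\eta$ is nilpotent via the filtration $(M\otimes_C B)E^k$. Your argument is self-contained, makes the role of the nilpotence of $E$ completely explicit, and yields the sharper structural fact that $\text{End}_B(M\otimes_C B)$ is a nilpotent ring extension of $\text{End}_C(M)$ rather than merely the indecomposability statement. Both approaches ultimately rely on $\text{mod}\,C$ being Krull--Schmidt so that $\text{End}_C(M)$ is local for indecomposable $M$.
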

\begin{proof}
Part (a).  Suppose $M \in \text{ind}\,C$, but $M \otimes_C B \cong M_1 \oplus M_2$ where neither $M_1$ nor $M_2$ is the zero module.  Because $B$ is a split extension of $C$, it follows that $C$ has a left $B$-module structure.  Thus, $M \otimes _C B \otimes _B C_C \cong (M_1 \oplus M_2) \otimes _B C_C$, and we conclude that $M \cong (M_1 \otimes _B C_C) \oplus (M_2 \otimes _B C_C)$.  By assumption $M$ is indecomposable, so without loss of generality let $M_1 \otimes _B C_C = 0$.  But $ 0 = M_1\otimes _B C \otimes_C B \cong M_1 \otimes _B B \cong M_1$, which is a contradiction.  Therefore, $M\otimes_C B$ is indecomposable.  A similar argument implies that if $M\in\text{ind}\,C$ then $D(B\otimes_C DM)\in\text{ind}\,B$.

Part (b).  Suppose $M\otimes_C B\cong N\otimes _C B$.  Again, since $C$ has a left $B$-module structure, we have that $M\otimes_C B \otimes _B C_C \cong N\otimes_C B \otimes _B C_C$, which means $M\cong M\otimes _C C_C \cong N\otimes_C C_C \cong N$.    The proof of part (c) is similar to that of part (b) and we omit it. 
\end{proof}

The next lemma describes a relationship between the Auslander-Reiten translations in $\textup{mod}\,C$ and $\textup{mod}\,B$, and induction and coinduction functors.  This lemma together with the following theorem were shown in \cite{AM}.  

\begin{lem} \label{3.8}
Suppose $B$ is a split extension of $C$ by a nilpotent bimodule, then for every $M \in $ {\upshape{mod}}$\,C$ \\
\indent{\upshape{(a)}}  $\tau_B (M\otimes_C B)\cong D(B\otimes_C D (\tau_C M))$.\\
\indent{\upshape{(b)}}  $\tau^{-1}_B D(B\otimes_C DM)\cong (\tau^{-1}_C M)\otimes_C B$.
\end{lem}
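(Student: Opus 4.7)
The plan is to prove (a) by a direct computation with the formula $\tau=D\,\text{Tr}$ applied to the induced projective presentation of $M\otimes_C B$, and then to prove (b) by the dual argument involving injective copresentations, the inverse Nakayama functor $\nu^{-1}$, and the formula $\tau^{-1}=\text{Tr}\,D$.

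For (a), take a minimal projective presentation $P_1\to P_0\to M\to 0$ in $\text{mod}\,C$. By Proposition \ref{3.4}(a) and right exactness, the sequence $P_1\otimes_C B\to P_0\otimes_C B\to M\otimes_C B\to 0$ is a projective presentation of $M\otimes_C B$ in $\text{mod}\,B$. It remains minimal because the nilpotent ideal $E$ is contained in $\text{rad}\,B$, hence $B/\text{rad}\,B\cong C/\text{rad}\,C$ and the top of $M\otimes_C B$ matches the top of $M$. Applying $\text{Hom}_B(-,B)$ and using the adjunction $\text{Hom}_B(P\otimes_C B,B)\cong\text{Hom}_C(P,B)$ together with the standard natural isomorphism $\text{Hom}_C(P,B)\cong B\otimes_C\text{Hom}_C(P,C)$ for finitely generated projective $P$, the cokernel $\text{Tr}_B(M\otimes_C B)$ becomes $B\otimes_C\text{Tr}_C M$. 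Dualizing and using $\tau_C=D\,\text{Tr}_C$ and $\tau_B=D\,\text{Tr}_B$ yields
\[
\tau_B(M\otimes_C B)\;\cong\;D(B\otimes_C\text{Tr}_C M)\;\cong\;D(B\otimes_C D\tau_C M).
\]

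For (b), dually, take a minimal injective copresentation $0\to M\to I_0\to I_1$ in $\text{mod}\,C$. By Proposition \ref{3.4}(b), the coinduction functor is left exact and preserves injectivity, giving an injective copresentation $0\to D(B\otimes_C DM)\to D(B\otimes_C DI_0)\to D(B\otimes_C DI_1)$ of $D(B\otimes_C DM)$ in $\text{mod}\,B$; it remains minimal because $\text{soc}_B D(B\otimes_C DM)\cong\text{soc}_C M$, the socle analogue of the top calculation above. Using $\tau^{-1}_B N = \text{coker}(\nu^{-1}_B I^0\to\nu^{-1}_B I^1)$ for such a copresentation, one verifies the natural isomorphism $\nu^{-1}_B D(B\otimes_C DI)\cong(\nu^{-1}_C I)\otimes_C B$ on indecomposable injective summands $I_C(i)$, where both sides reduce to $e_i B$. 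Combined with right exactness of $-\otimes_C B$, this gives $\tau^{-1}_B D(B\otimes_C DM)\cong(\tau^{-1}_C M)\otimes_C B$.

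The main technical obstacle is establishing and carefully tracking the natural isomorphism $B\otimes_C\text{Hom}_C(P,C)\cong\text{Hom}_C(P,B)$ of left $B$-modules for finitely generated projective $P$, together with its dual statement for $\nu^{-1}$, while keeping the various left/right $B$- and $C$-module structures straight so that the induced (resp.\ coinduced) differential maps to the correct morphism of bimodules. A secondary point is verifying preservation of minimality under induction and coinduction, which rests on the containment $E\subseteq\text{rad}\,B$ afforded by the split extension structure.
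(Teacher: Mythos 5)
The paper does not give its own proof of this lemma; it is cited directly from Assem--Marmaridis \cite{AM}. So there is no internal proof to compare against, but your argument follows the standard route one would expect (projective presentations, transpose, and the natural isomorphism $\text{Hom}_C(P,B)\cong B\otimes_C\text{Hom}_C(P,C)$ for f.g.\ projective $P$), and the module-theoretic bookkeeping in your sketch is essentially right.

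There is, however, a real gap in the minimality step. You justify minimality of the induced presentation $P_1\otimes_C B\to P_0\otimes_C B\to M\otimes_C B\to 0$ by observing that $E\subseteq\operatorname{rad}B$ implies $\operatorname{top}(M\otimes_C B)\cong\operatorname{top}M$. That only verifies that $P_0\otimes_C B\to M\otimes_C B$ is a projective cover, i.e.\ $\operatorname{Im}(f\otimes 1)\subseteq\operatorname{rad}(P_0\otimes_C B)$. A minimal projective presentation also requires $\ker(f\otimes 1)\subseteq\operatorname{rad}(P_1\otimes_C B)$, equivalently that $P_1\otimes_C B$ is a projective cover of the first syzygy. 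These two conditions are independent (one can have a radical map with a non-radical kernel). The missing inequality does hold here: writing $P_i\otimes_C B\cong P_i\oplus(P_i\otimes_C E)$, one has $\ker(f\otimes 1)=\ker f\oplus\ker(f\otimes 1_E)$, the first summand lies in $\operatorname{rad}_C P_1\subseteq\operatorname{rad}_B(P_1\otimes_C B)$, and the second lies in $P_1\otimes_C E=(P_1\otimes_C B)E\subseteq(P_1\otimes_C B)\operatorname{rad}B$. You should spell this out (and its dual for the socle in part (b)), since a non-minimal presentation would only yield the transpose up to projective summands, and then $D\operatorname{Tr}$ would acquire spurious injective summands. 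Alternatively, you can sidestep minimality entirely: your computation gives $B\otimes_C\operatorname{Tr}_CM\cong\operatorname{Tr}_B(M\otimes_C B)\oplus Q$ with $Q$ projective, and Proposition~\ref{3.7}(a) applied on the left-module side shows $B\otimes_C\operatorname{Tr}_CM$ is indecomposable whenever $M$ is indecomposable non-projective, forcing $Q=0$. Either repair completes the proof; as written, the minimality claim is asserted but not established.
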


\begin{thm} \label{3.9}
Suppose $B$ is a split extension of $C$ by a nilpotent bimodule $E$ and $T\in$ {\upshape{mod}}$\,C$, then\\
\indent{\upshape{(a)}}  $T\otimes_C B$ is a (partial) tilting $B$-module if and only if $T$ is a (partial) tilting $C$-module, {\upshape{Hom}}$_C(T\otimes_C E, \tau_C T)=0$ and {\upshape{Hom}}$_C(DE,\tau_C T)=0$. \\
\indent{\upshape{(b)}} $D(B\otimes_C DT)$ is a (partial) cotilting $B$-module if and only if $T$ is a (partial) cotilting $C$-module, {\upshape{Hom}}$_C(\tau^{-1}_CT, D(E\otimes_C DT))=0$ and {\upshape{Hom}}$_C(\tau^{-1}_C T, E)=0$.
\end{thm}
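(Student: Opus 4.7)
The plan is to establish part (a) by separately analyzing the two defining conditions of partial tilting for $T\otimes_C B$---projective dimension at most one and $\textup{Ext}^1$-vanishing---and then producing the tilting short exact sequence. Part (b) will follow by a dual argument, exchanging induction with coinduction and tilting with cotilting.

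I would first handle projective dimension. A $C$-projective resolution $0\to P_1\to P_0\to T\to 0$ (available from $\textup{pd}_C T\le 1$) tensored with $B$ yields
\[
0\to \textup{Tor}_1^C(T,B)\to P_1\otimes_C B\to P_0\otimes_C B\to T\otimes_C B\to 0,
\]
and since $B\cong C\oplus E$ as left $C$-module with $C$ being $C$-projective, $\textup{Tor}_1^C(T,B)\cong\textup{Tor}_1^C(T,E)$. A derived form of Lemma~\ref{*} gives $\textup{Tor}_1^C(T,E)\cong D\textup{Ext}^1_C(T,DE)$, and the Auslander--Reiten formula applied to the pd-$1$ module $T$ then identifies this with $\overline{\textup{Hom}}_C(DE,\tau_C T)$. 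Hence the hypothesis $\textup{Hom}_C(DE,\tau_C T)=0$ forces $\textup{Tor}_1^C(T,E)=0$ and the tensored sequence becomes a length-$1$ projective $B$-resolution of $T\otimes_C B$.

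For the $\textup{Ext}^1$-vanishing, Auslander--Reiten together with Lemma~\ref{3.8}(a) gives
\[
\textup{Ext}^1_B(T\otimes_C B,T\otimes_C B)\cong D\overline{\textup{Hom}}_B\bigl(T\otimes_C B,\,D(B\otimes_C D\tau_C T)\bigr).
\]
The induction--restriction adjunction reduces this to $\textup{Hom}_C(T,\,D(B\otimes_C D\tau_C T)|_C)$, and the bimodule splitting $B=C\oplus E$ decomposes the target as $\tau_C T\oplus D(E\otimes_C D\tau_C T)$. The summand $\textup{Hom}_C(T,\tau_C T)$ vanishes because $T$ is partial tilting of pd at most one and therefore $\tau_C$-rigid. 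For the remaining summand, Lemma~\ref{*} rewrites $D(E\otimes_C D\tau_C T)\cong\textup{Hom}_C(E,\tau_C T)$, and a further tensor-Hom adjunction turns $\textup{Hom}_C(T,\textup{Hom}_C(E,\tau_C T))$ into $\textup{Hom}_C(T\otimes_C E,\tau_C T)$, which is zero by hypothesis.

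The tilting completion is obtained by applying $-\otimes_C B$ to the defining exact sequence $0\to C\to T'\to T''\to 0$ with $T',T''\in\textup{add}\,T$: the vanishing $\textup{Tor}_1^C(T'',E)=0$ already secured preserves exactness and produces $0\to B\to T'\otimes_C B\to T''\otimes_C B\to 0$ with the last two terms in $\textup{add}(T\otimes_C B)$. The converse direction of (a) runs the same three computations backwards, using that $T$ is a $C$-summand of $(T\otimes_C B)|_C=T\oplus T\otimes_C E$ to extract $\tau_C$-rigidity of $T$ and, combined with the Hom-vanishings, the bound on pd$_C T$. Part (b) is entirely dual: replace induction by coinduction, Lemma~\ref{3.8}(a) by Lemma~\ref{3.8}(b), the decomposition of Proposition~\ref{3.6}(a) by that of Proposition~\ref{3.6}(b), and $\tau_C$-rigidity by its $\tau_C^{-1}$-corigid analogue. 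The principal technical obstacle is reconciling barred with unbarred Hom in the Auslander--Reiten formula: the arguments above naturally produce vanishing of $\overline{\textup{Hom}}$, and for the converse direction one must verify that in this split-extension setting the relevant morphisms $DE\to\tau_C T$ and $T\otimes_C E\to\tau_C T$ cannot factor nontrivially through injective $C$-modules, so that the stated unbarred conditions capture the full $B$-side hypotheses.
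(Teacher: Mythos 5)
The paper does not prove Theorem~\ref{3.9}: immediately before Lemma~\ref{3.8} the authors state that both results ``were shown in [AM]'' (Assem--Marmaridis), and no proof is given in this paper. So there is no internal argument to compare against; what follows is an evaluation of your outline on its own merits.

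Your treatment of the ``if'' direction of part (a) is essentially sound. The Tor computation, $\textup{Tor}_1^C(T,B)\cong\textup{Tor}_1^C(T,E)\cong D\textup{Ext}^1_C(T,DE)$, is correct, and the reduction of $\textup{Ext}^1_B(T\otimes_C B,T\otimes_C B)$ via Lemma~\ref{3.8}(a), the induction--restriction adjunction, the bimodule decomposition $D(B\otimes_C D\tau_C T)\vert_C\cong\tau_C T\oplus D(E\otimes_C D\tau_C T)$, and the tensor--Hom adjunction is the right sequence of moves. The barred-versus-unbarred worry you flag at the end is, for this direction, not actually an obstacle, and your proof should say so explicitly rather than leave it hanging: since $\textup{pd}_C T\le 1$ is a hypothesis, the Auslander--Reiten formula already gives $\textup{Ext}^1_C(T,DE)\cong D\textup{Hom}_C(DE,\tau_C T)$ with the \emph{full} Hom-space (no morphism into $\tau_C T$ can factor through an injective when the first Ext-argument has projective dimension one). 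Likewise, once your first paragraph establishes $\textup{pd}_B(T\otimes_C B)\le 1$, the $B$-side AR formula also produces full $\textup{Hom}_B$, which is what you implicitly need for the adjunction step to make sense --- the induction--restriction adjunction is a statement about genuine Hom-spaces, not stable quotients, so you should pass to the unbarred form \emph{before} applying it.

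The genuine gap is the ``only if'' direction, which you dispose of in one sentence as ``running the same three computations backwards.'' It is not symmetric. From $\textup{pd}_B(T\otimes_C B)\le 1$ you cannot immediately read off $\textup{pd}_C T\le 1$: restriction along $C\hookrightarrow B$ does not control projective dimension, because ${}_CB\cong C\oplus E$ is not flat, and $T$ being a $C$-direct summand of $(T\otimes_C B)\vert_C$ gives no bound on $\textup{pd}_C T$ without some such flatness. Until that bound is secured you also cannot legitimately invoke the unbarred AR formula on the $C$-side, so the backward extraction of the two Hom-vanishings is circular as written. This direction genuinely needs its own argument --- which is precisely the content of [AM]. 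It is worth noting that the present paper only ever uses the ``if'' direction (in the proof of Theorem~\ref{4.8}), so for the purposes of this paper your sketch covers the relevant part; but as a proof of the stated biconditional it is incomplete.
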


\section{Induced and Coinduced Modules in  Cluster-Tilted Algebras}\label{sect 4}

In this section we develop properties of the induction and coinduction functors particularly when $C$ is an algebra of global dimension at most two and $B=C\ltimes E$ is the trivial extension of $C$ by the $C$-$C$-bimodule $E=\text{Ext}^2_C (DC,C)$. In the specific case when $C$ is also a tilted algebra, $B$ is the corresponding cluster-tilted algebra.  Some of the results in this section only hold when $C$ is tilted, but many hold in a more general situation when gl.dim$\,C\leq 2$, and we make that distinction clear in the assumptions of each statement.  However, throughout this section we always assume $E=\text{Ext}_C^2 (DC,C)$ and a tensor product $\otimes$ is a tensor product over $C$.  The main result of this section holds when $C$ is a tilted algebra.  It says that $DE$ is a partial tilting and $\tau _C$-rigid $C$-module, and the corresponding induced module $DE\otimes B$ is a partial tilting and $\tau _B$-rigid $B$-module.  

 We begin by establishing some properties of $E$ and $DE$ when the corresponding algebra $C$ has global dimension at most two.

\begin{prop} \label{4.1}
Let $C$ be an algebra of global dimension at most 2. Then \\
{\upshape{\indent (a) $E\cong \tau^{-1}\Omega^{-1} C $.\\
\indent (b) $DE\cong \tau\Omega DC$.\\
\indent (c) $M\otimes E\cong \tau^{-1}\Omega^{-1} M$.\\
\indent (d) $D(E\otimes DM)\cong \tau\Omega M$.}}
\end{prop}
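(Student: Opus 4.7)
The plan is to prove the four parts via resolution arguments; parts (c) and (d) are the module-theoretic generalizations of (a) and (b), obtained by replacing $C$ and $DC$ with an arbitrary module $M$.

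For (b), I will take a minimal projective resolution $0\to P_2\to P_1\to P_0\to DC\to 0$, which exists because $\text{pd}\,DC\le\text{gl.dim}\,C\le 2$. Applying $(-)^*=\text{Hom}_C(-,C)$ and passing to the second cohomology gives $E\cong\text{coker}(P_1^*\to P_2^*)$, which is exactly the transpose $\text{Tr}(\Omega DC)$, since $P_2\to P_1\to\Omega DC\to 0$ is a projective presentation of the syzygy. Applying $D$ yields $DE\cong D\text{Tr}(\Omega DC)=\tau\Omega DC$. For (a), I dualize: take a minimal injective coresolution $0\to C\to I^0\to I^1\to I^2\to 0$ (exists since $\text{id}\,C\le 2$), apply $\text{Hom}_C(DC,-)=\nu^{-1}$ on injectives, and take the second cohomology to obtain $E\cong\text{coker}(\nu^{-1}I^1\to\nu^{-1}I^2)$. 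Since $\Omega^{-1}C$ has injective copresentation $0\to\Omega^{-1}C\to I^1\to I^2$, the standard Nakayama formula for $\tau^{-1}$ in terms of an injective copresentation identifies this cokernel with $\tau^{-1}\Omega^{-1}C$.

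For (c), the key intermediate identity is $M\otimes_C E\cong\text{Ext}^2_C(DC,M)$. Using the projective resolution of $DC$ from (b) together with the natural isomorphism $M\otimes_C P^*\cong\text{Hom}_C(P,M)$ for any finitely generated projective $P$, right-exactness of $M\otimes_C-$ gives
\[ M\otimes_C E\cong\text{coker}\bigl(\text{Hom}_C(P_1,M)\to\text{Hom}_C(P_2,M)\bigr)=\text{Ext}^2_C(DC,M). \]
Repeating the argument of (a) with $M$ in place of $C$ (using an injective coresolution $0\to M\to J^0\to J^1\to J^2\to 0$, which exists since $\text{id}\,M\le 2$) then yields $\text{Ext}^2_C(DC,M)\cong\tau^{-1}\Omega^{-1}M$, completing (c).

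For (d), Hom--tensor adjunction gives $D(E\otimes_C DM)\cong\text{Hom}_C(E,M)$, so it suffices to identify the latter with $\tau\Omega M$. Applying $\text{Hom}_C(-,M)$ to the projective presentation $\nu^{-1}I^1\to\nu^{-1}I^2\to E\to 0$ coming from (a) and using the Nakayama identity $\text{Hom}_C(\nu^{-1}I,M)\cong D\text{Hom}_C(M,I)$ for injective $I$, one dualizes to see that $D\text{Hom}_C(E,M)\cong\text{coker}\bigl(\text{Hom}_C(M,I^1)\to\text{Hom}_C(M,I^2)\bigr)=\text{Ext}^2_C(M,C)=\text{Tr}(\Omega M)$, hence $\text{Hom}_C(E,M)\cong D\text{Tr}(\Omega M)=\tau\Omega M$. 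The main technical obstacle throughout is the bookkeeping of bimodule actions: one must verify at each step that the natural isomorphisms are compatible with the $C$-$C$-bimodule structure on $E$ (e.g.\ that $M\otimes_C P^*\cong\text{Hom}_C(P,M)$ is right $C$-linear and that the Nakayama identity respects the appropriate action), so that the final conclusions are isomorphisms of $C$-modules and not merely of vector spaces. Once this bookkeeping is carried out, everything reduces to standard Nakayama-functor and transpose manipulations.
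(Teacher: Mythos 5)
Your proof is correct, but it takes a genuinely different route from the paper's. The paper proceeds by dimension-shifting: it rewrites $\text{Ext}^2_C(DC,C)$ as $\text{Ext}^1_C(DC,\Omega^{-1}C)$ (resp.\ $\text{Ext}^1_C(\Omega DC,C)$), then applies the Auslander--Reiten formula to convert this $\text{Ext}^1$ into $D\overline{\text{Hom}}$ or $D\underline{\text{Hom}}$, observing that the bound $\text{gl.dim}\,C\le 2$ forces $\text{id}\,\Omega^{-1}C\le 1$ (resp.\ $\text{pd}\,\Omega DC\le 1$), so the stable Hom is the full Hom; finally it terminates with $\text{Hom}_C(-,DC)\cong D(-)$ or $\text{Hom}_C(C,-)\cong\text{id}$. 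Parts (c) and (d) are handled by chaining two AR-formula applications. You instead compute the same $\text{Ext}^2$ directly as the top cohomology of a two-step resolution --- the dualized minimal projective resolution $P_\bullet^*$ of $DC$ for (b), and $\nu^{-1}$ of the minimal injective coresolution of $C$ for (a) --- and then identify the resulting cokernel with $\text{Tr}(\Omega DC)$ or with the Nakayama presentation of $\tau^{-1}\Omega^{-1}C$. For (c) and (d) you first establish the intermediate identities $M\otimes_C E\cong\text{Ext}^2_C(DC,M)$ and $D\text{Hom}_C(E,M)\cong\text{Ext}^2_C(M,C)$ via right-exactness of $M\otimes_C-$ (resp.\ left-exactness of $\text{Hom}_C(-,M)$) applied to the explicit two-term presentation of $E$, together with the standard isomorphisms $M\otimes_C P^*\cong\text{Hom}_C(P,M)$ and $\text{Hom}_C(\nu^{-1}I,M)\cong D\text{Hom}_C(M,I)$. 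This is more hands-on and avoids invoking the AR formula as a black box, at the modest cost of tracking minimal presentations; the paper's dimension-shifting is shorter and slicker but relies on the same Nakayama/transpose machinery packaged inside the AR formula. Both are valid, and your concern about bimodule bookkeeping applies equally to either argument.
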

\begin{proof}
Since the global dimension of $C$ is at most 2,  we have id$_C \Omega^{-1}M \leq 1$ (resp. pd$_C \Omega M \leq 1$) for all $C$-modules $M$. Therefore,  when applying the Auslander-Reiten formula below, we obtain the full Hom-space and not its quotient by the space of morphisms factoring through projectives (respectively injectives). \\
Part (a). $E=\text{Ext}^2_C(DC,C)\cong\text{Ext}_C^1(DC,\Omega^{-1}C)\cong D\text{Hom}_C(\tau^{-1}\Omega^{-1}C,DC)\cong \tau^{-1}\Omega^{-1}C$.\\
Part (b).  $DE=D\text{Ext}^2_C(DC,C)\cong D\text{Ext}_C^1(\Omega DC,C)\cong \text{Hom}_C(C,\tau\Omega DC)\cong \tau\Omega DC$.\\
Part (c).  Using Lemma \ref{*} we have $M\otimes E\cong D\text{Hom}_C(M,DE)$, which in turn by part (b) is isomorphic to $D\text{Hom}_C(M, \tau\Omega DC)$.  Then, we have the following chain of isomorphisms $M\otimes E\cong D\text{Hom}_C(M, \tau\Omega DC)\cong \text{Ext}^1_C(\Omega DC, M)\cong \text{Ext}^2_C (DC, M)\cong \text{Ext}^1_C( DC, \Omega^{-1} M)\cong D\text{Hom}_C(\tau^{-1}\Omega^{-1}M, DC)\cong \tau^{-1}\Omega^{-1} M$.  Part (d) can be shown in a similar manner as above.  
\end{proof}

\begin{prop}\label{4.3}
Let $C$ be an algebra of global dimension at most 2, and let $B=C\ltimes E$.  Suppose $M\in$ {\upshape{mod}}$\,C$,  then \\
\indent{\upshape{(a)}} {\upshape{id}}$_C M\leq 1$ if and only if $M\otimes B\cong M$.\\
\indent{\upshape{(b)}} {\upshape{pd}}$_C M\leq 1$ if and only if $D(B\otimes DM)\cong M$.
\end{prop}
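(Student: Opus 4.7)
The plan is to reduce both statements to vanishing criteria via the short exact sequences from Proposition \ref{3.6}, and then identify the flanking terms using Proposition \ref{4.1}.

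For part (a), I would start from the short exact sequence
\[ 0 \longrightarrow M\otimes E \longrightarrow M\otimes B \longrightarrow M \longrightarrow 0 \]
of Proposition \ref{3.6}(a). Counting $k$-dimensions, the isomorphism $M\otimes B\cong M$ is equivalent to $M\otimes E=0$, and conversely if $M\otimes E=0$ then this sequence produces the required isomorphism. By Proposition \ref{4.1}(c), $M\otimes E\cong \tau^{-1}\Omega^{-1}M$, so the condition becomes $\tau^{-1}\Omega^{-1}M=0$, i.e.\ $\Omega^{-1}M$ is injective. This is precisely the assertion that $M$ admits an injective coresolution of length at most one, meaning $\textup{id}_C M\leq 1$. (The case $\textup{id}_CM=0$ is included: then $\Omega^{-1}M=0$, which is trivially injective.)

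Part (b) is entirely dual. Starting from the short exact sequence
\[ 0 \longrightarrow M \longrightarrow D(B\otimes DM) \longrightarrow D(E\otimes DM) \longrightarrow 0 \]
of Proposition \ref{3.6}(b), the isomorphism $D(B\otimes DM)\cong M$ is equivalent, again by dimension count, to $D(E\otimes DM)=0$. By Proposition \ref{4.1}(d), $D(E\otimes DM)\cong \tau\Omega M$, so the condition becomes $\tau\Omega M=0$, i.e.\ $\Omega M$ is projective, which is exactly $\textup{pd}_CM\leq 1$.

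There is no real obstacle here; the argument is essentially bookkeeping once Propositions \ref{3.6} and \ref{4.1} are in hand. The only point requiring a brief comment is the dimension-count step used in both directions: one could alternatively observe that, in each case, the canonical map in the short exact sequence provides a splitting once the outer term vanishes, so the equivalence can be stated without invoking finite-dimensionality of $M$ explicitly, though in our setting all modules are finite dimensional and the dimension argument suffices.
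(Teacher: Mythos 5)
Your argument is correct and follows essentially the same route as the paper: both reduce the claim to showing that $M\otimes E=0$ if and only if $\textup{id}_C\,M\leq 1$ (and dually for part (b)), and then invoke Proposition~\ref{4.1}(c)--(d). The paper passes through the $C$-module direct sum decomposition $M\otimes B\cong M\oplus M\otimes E$ rather than the short exact sequence of Proposition~\ref{3.6}, but this is only a cosmetic difference.
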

\begin{proof}
Part (a). Recall that $M\otimes B\cong M\oplus M\otimes E$ as $C$-modules.  Therefore, it suffices to show that $M\otimes E=0$ if and only if id$_C M\leq 1$.    Proposition \ref{4.1}(c) implies that $M\otimes E\cong \tau^{-1}\Omega^{-1} M$, which is zero if and only if id$_C M\leq 1$. \\
Part (b).  Similarly it suffices to show that $D(E\otimes DM)=0$ if and only if pd$_C M\leq 1$.  However,  Proposition \ref{4.1}(d) implies that $D(E\otimes DM)\cong \tau\Omega M$, which again is zero if and only if pd$_C M\leq 1$.  
\end{proof}

\begin{lem} \label{4.2}
Let $C$ be an algebra of global dimension 2. Then  for all {\upshape{$M\in \text{mod}\,C$}}\\
\indent {\upshape{(a)}}  {\upshape{pd}}$_C N = 2$ for all nonzero {\upshape{$N\in\text{add}\, M\otimes E$}}. \\
\indent {\upshape{(b)}}  {\upshape{id}}$_C N = 2$ for all nonzero {\upshape{$N\in\text{add}\, D(E\otimes  DM)$}}.
\end{lem}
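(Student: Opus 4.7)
The plan is to reduce both parts to checking the non-vanishing of certain Hom spaces, using the duality in Lemma~\ref{*} to convert the tensor-product characterizations of low projective/injective dimension supplied by Propositions~\ref{4.1} and~\ref{4.3} into Hom-space characterizations. The hypothesis that $N$ lies in $\textup{add}(M\otimes E)$ or $\textup{add}\,D(E\otimes DM)$ then directly produces a nonzero element of the relevant Hom space, forcing the dimension to equal $2$.

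For part (a), I first establish the key equivalence
\[\textup{pd}_C N\leq 1 \iff \textup{Hom}_C(E,N)=0.\]
Proposition~\ref{4.3}(b) combined with Proposition~\ref{4.1}(d) gives $\textup{pd}_C N\leq 1 \iff E\otimes_C DN=0$, and Lemma~\ref{*} (applied with $M=E$ and the left $C$-module $DN$) yields $E\otimes_C DN\cong D\textup{Hom}_C(E,N)$. To exhibit a nonzero element of $\textup{Hom}_C(E,N)$, let $\pi\colon M\otimes E\to N$ be the canonical projection onto the summand $N$. Since $\pi\neq 0$, choose $m\in M$ and $e_0\in E$ with $\pi(m\otimes e_0)\neq 0$; then $\phi_m\colon E\to N$, $e\mapsto \pi(m\otimes e)$, is $C$-linear (from $(m\otimes e)\cdot c = m\otimes ec$ and $C$-linearity of $\pi$) and nonzero by construction. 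Hence $\textup{Hom}_C(E,N)\neq 0$, so $\textup{pd}_C N\geq 2$, and combined with $\textup{pd}_C N\leq \textup{gl.dim}\,C=2$ this gives $\textup{pd}_C N=2$.

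For part (b), I dually establish
\[\textup{id}_C N\leq 1 \iff \textup{Hom}_C(N,DE)=0\]
from Proposition~\ref{4.3}(a) together with $N\otimes_C E\cong D\textup{Hom}_C(N,DE)$ provided by Lemma~\ref{*}. A second application of Lemma~\ref{*} identifies $D(E\otimes DM)\cong \textup{Hom}_C(E,M)$, so the inclusion $\iota\colon N\hookrightarrow \textup{Hom}_C(E,M)$ realizes each element of $N$ as a $C$-linear map $E\to M$. For a nonzero $n\in N$ pick $e\in E$ with $\iota(n)(e)\neq 0$ and then $m'\in DM$ with $m'(\iota(n)(e))\neq 0$; the map $\Psi_{m'}\colon N\to DE$ defined by $\Psi_{m'}(n')(e') = m'(\iota(n')(e'))$ is $C$-linear and nonzero, whence $\textup{Hom}_C(N,DE)\neq 0$ and $\textup{id}_C N=2$.

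The only conceptually nontrivial step is recognizing that Lemma~\ref{*} lets us trade the vanishing conditions $E\otimes DN=0$ and $N\otimes E=0$ (which by Propositions~\ref{4.1} and~\ref{4.3} encode $\textup{pd}\leq 1$ and $\textup{id}\leq 1$ respectively) for vanishing of Hom spaces, against which the summand structure furnishes explicit witnesses $\phi_m$ and $\Psi_{m'}$. Once this reformulation is in place, the verification that $\phi_m$ and $\Psi_{m'}$ are $C$-linear and nonzero is routine, and no further obstacle arises.
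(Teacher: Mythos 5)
Your proof is correct and takes a genuinely different route from the paper's. The paper works directly with minimal resolutions: via Proposition~\ref{4.1}(c) it identifies $M\otimes E$ with $\tau^{-1}\Omega^{-1}M$, applies the inverse Nakayama functor $\nu^{-1}$ to a minimal injective resolution of $\Omega^{-1}M$ to produce a minimal projective resolution of $\tau^{-1}\Omega^{-1}M$, and shows the second-syzygy term $\nu^{-1}\Omega^{-1}M=\text{Hom}_C(DC,\Omega^{-1}M)$ is a nonzero projective, forcing $\text{pd}_C=2$; a nonzero direct summand $N$ is handled by noting that the minimal resolution decomposes accordingly. You instead reformulate the dimension bound as a Hom-vanishing condition: Propositions~\ref{4.1} and~\ref{4.3} together with Lemma~\ref{*} give the equivalence $\text{pd}_C N\leq 1 \iff \text{Hom}_C(E,N)=0$, after which the split projection $M\otimes E\twoheadrightarrow N$, precomposed with $e\mapsto m\otimes e$ for a suitable $m\in M$, exhibits an explicit nonzero element of $\text{Hom}_C(E,N)$; part~(b) is dual. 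Your approach bypasses the Nakayama-functor computation entirely and isolates a clean reformulation that the paper never states, while leaning only on Propositions~\ref{4.1}, \ref{4.3} and Lemma~\ref{*}, all of which precede the lemma, so no circularity arises. The paper's proof is more constructive in that it actually builds the minimal resolution; yours is more economical and makes transparent the role of $E$ (respectively $DE$) as a detector of projective (respectively injective) dimension two.
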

\begin{proof}
Part (a). By Proposition \ref{4.1} (c), $M\otimes E\cong \tau^{-1}\Omega^{-1}M$, which is nonzero if and only if id$_C M = 2$.   Now, consider a minimal injective resolution of $M$ and $\Omega^{-1}M$ 
$$\xymatrix@R-=.5cm {&&&\Omega^{-1}M\ar[dr]&&&&\\
0\ar[r]&M\ar[r]&I^0\ar[rr]\ar[ur]^{\pi}&&I^1\ar[r]&I^2\ar[r]&0.}$$
\noindent{Apply $\nu^{-1}$ to find a minimal projective resolution of $\tau^{-1}\Omega^{-1}M$.}
$$\xymatrix@R-=.5cm {
0\ar[r]&\nu^{-1}\Omega^{-1}M\ar[r]&\nu^{-1}I^1\ar[r]&\nu^{-1}I^2\ar[r]&\tau^{-1}\Omega^{-1}M\ar[r]&0.}$$
Since $C$ has global dimension two, $\nu^{-1}\Omega^{-1}M$ is a projective $C$-module.   It remains to show that it is nonzero.  By definition $\nu^{-1}\Omega^{-1}M=\text{Hom}_C(DC, \Omega^{-1}M)$, which is nonzero since we have a nonzero map $\pi$. Finally, observe that the argument above holds if we replace $\tau^{-1}\Omega^{-1}M$ by a nonzero direct summand of $\tau^{-1}\Omega^{-1}M$.  This completes the proof. Part (b) can be shown in a similar manner as above. 
\end{proof}

%{\blue The following lemma is used throughout the paper.}

\begin{lem}\label{4.6}
Let $C$ be an algebra of global dimension at most 2,  then \\
\indent {\upshape{(a) Ext$_C^1(E,C)=0$.\\
\indent (b) Ext$_C^1(DC,DE)=0$.\\
\indent (c) Ext$_C^1 (E,E)=0$.\\
\indent (d) Ext$_C^1 (DE,DE)=0$.}}
\end{lem}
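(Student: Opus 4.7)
The plan is to use Auslander--Reiten duality to convert each $\text{Ext}_C^1$-vanishing into the vanishing of a Hom space modulo injective (or projective) factorizations, then exploit the projectivity of $C$ (for (a), (c)) and the injectivity of $DC$ (for (b), (d)). The four parts split into two pairs: (a)$\Rightarrow$(c) via the sequence $0\to C\to I^0\to\Omega^{-1}C\to 0$, and (b)$\Rightarrow$(d) via a projective presentation $0\to\Omega DC\to P^0\to DC\to 0$.

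For (a), AR duality gives $D\,\text{Ext}_C^1(E,C)\cong\overline{\text{Hom}}_C(C,\tau E)$, where $\overline{\text{Hom}}$ denotes Hom modulo morphisms factoring through injectives. Using $E\cong\tau^{-1}\Omega^{-1}C$ from Proposition \ref{4.1}(a), one identifies $\tau E$ with the direct summand of $\Omega^{-1}C$ complementary to its maximal injective summand. Applying $\text{Hom}_C(C,-)$ to $0\to C\to I^0\to\Omega^{-1}C\to 0$ and using $\text{Ext}_C^1(C,C)=0$ (because $C$ is projective) produces a surjection $\text{Hom}_C(C,I^0)\twoheadrightarrow\text{Hom}_C(C,\Omega^{-1}C)$; restricting to the $\tau E$-summand shows that every morphism $C\to\tau E$ factors through the injective $I^0$, so $\overline{\text{Hom}}_C(C,\tau E)=0$. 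Part (b) is immediate from AR duality: $D\,\text{Ext}_C^1(DC,DE)\cong\overline{\text{Hom}}_C(DE,\tau DC)$, and $\tau DC=0$ because $DC$ is injective.

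Parts (c) and (d) are obtained by bootstrapping from (a) and (b). For (c), AR duality gives $D\,\text{Ext}_C^1(E,E)\cong\overline{\text{Hom}}_C(E,\tau E)$, and applying $\text{Hom}_C(E,-)$ to the same sequence $0\to C\to I^0\to\Omega^{-1}C\to 0$ makes the connecting map vanish by part (a), so $\text{Hom}_C(E,I^0)\twoheadrightarrow\text{Hom}_C(E,\Omega^{-1}C)$ and every $E\to\tau E$ again factors through $I^0$. Part (d) is dual: use $D\,\text{Ext}_C^1(DE,DE)\cong\underline{\text{Hom}}_C(\tau^{-1}DE,DE)$ (Hom modulo morphisms factoring through projectives), identify $\tau^{-1}DE$ with the non-projective summand of $\Omega DC$ using Proposition \ref{4.1}(b), apply $\text{Hom}_C(-,DE)$ to $0\to\Omega DC\to P^0\to DC\to 0$, and invoke part (b) to show every $\tau^{-1}DE\to DE$ factors through the projective $P^0$.

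The main subtlety is the bookkeeping around ``stripping'' the maximal injective (respectively projective) summand of $\Omega^{-1}C$ (respectively $\Omega DC$) to correctly identify $\tau E$ (respectively $\tau^{-1}DE$), and verifying that the factorizations through $I^0$ (respectively $P^0$) descend to the desired summand via the splitting of that summand. Beyond this routine diagram-chase, each of the four parts reduces to a one-line long-exact-sequence argument.
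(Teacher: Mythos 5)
Your parts (a) and (c) match the paper's proof and are correct; part (d) is the right dual strategy and would be correct if (b) held. But your proof of (b) contains a genuine error: you claim $\tau DC = 0$ because $DC$ is injective. The Auslander--Reiten translate $\tau M$ vanishes precisely when $M$ is \emph{projective}, not when $M$ is injective (it is $\tau^{-1}M$ that vanishes for injective $M$). Since $DC$ is injective but in general not projective (unless $C$ is self-injective, which is not assumed here), $\tau DC$ is typically nonzero, so the Hom-space $\overline{\text{Hom}}_C(DE,\tau DC)$ need not be zero for this trivial reason, and your ``immediate'' argument collapses. This also undermines (d), since your argument there invokes (b).

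The fix is to run the genuine dual of your argument for (a). Use Proposition \ref{4.1}(b) to write $DE\cong\tau\Omega DC$, so by Auslander--Reiten duality
\[
\text{Ext}^1_C(DC,DE)\cong\text{Ext}^1_C(DC,\tau\Omega DC)\cong D\underline{\text{Hom}}_C(\tau^{-1}\tau\Omega DC,\,DC),
\]
and $\tau^{-1}\tau\Omega DC$ is the non-projective summand of $\Omega DC$. Now apply $\text{Hom}_C(-,DC)$ to a minimal projective presentation $0\to\Omega DC\to P^0\to DC\to 0$; since $DC$ is injective, $\text{Ext}^1_C(DC,DC)=0$, so the induced map $\text{Hom}_C(P^0,DC)\to\text{Hom}_C(\Omega DC,DC)$ is surjective. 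Thus every morphism $\Omega DC\to DC$ factors through the projective $P^0$, whence $\underline{\text{Hom}}_C(\Omega DC,DC)=0$ and (b) follows. With this repair, your proof of (d), which bootstraps from (b) via the same presentation and the vanishing of $\text{Ext}^1_C(DC,DE)$, goes through as you wrote it, and is exactly what the paper means by ``proven similarly.''
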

\begin{proof}
We will show parts (a) and (c), and the rest of the lemma can be proven similarly.  \\
Part (a).  By Proposition \ref{4.1}(a), we see that Ext$^1_C (E,C)\cong \text{Ext}^1_C(\tau^{-1}\Omega^{-1} C, C)$, which in turn by the Auslander-Reiten formula is isomorphic to $D\overline{\text{Hom}}_C (C,\Omega^{-1} C)$.  Let $i: C\rightarrow I$ be an injective envelope of $C$, thus we have the following short exact sequence 
\begin{equation}\label{(2)} \xymatrix{0\ar[r]&C\ar[r]^i &I\ar[r]^-{\pi}&\Omega^{-1}C\ar[r]&0}. \end{equation}
Applying Hom$_C(C,-)$ to this sequence we obtain an exact sequence
 $$\xymatrix{0\ar[r]& \text{Hom}_C(C,C)\ar[r]&\text{Hom}_C (C,I)\ar[r]^-{\pi_*}&\text{Hom}_C(C,\Omega^{-1}C)\ar[r]&\text{Ext}^1_C(C,C)}.$$ 
However,  Ext$^1_C(C,C)=0$ shows that $\pi_*$ is surjective.  This implies that every morphism from $C$ to $\Omega^{-1}C$ factors through the injective $I$. Thus,  $\overline{\text{Hom}}_C(C,\Omega^{-1}C)=0$, and this shows part (a). \\
Part (c).  As above observe that Ext$^1_C(E,E)\cong D \overline{\text{Hom}}_C(E,\Omega^{-1}C)$.  Applying Hom$_C(E,-)$ to the sequence \eqref{(2)} we get  an exact sequence
 $$\xymatrix{0\ar[r]& \text{Hom}_C(E,C)\ar[r]&\text{Hom}_C (E,I)\ar[r]^-{\pi_{**}}&\text{Hom}_C(E,\Omega^{-1}C)\ar[r]&\text{Ext}^1_C(E,C)}.$$ 
But then by part (a) we have Ext$^1_C(E,C)=0$, which shows that $\pi_{**}$ is surjective.  Thus $\overline{\text{Hom}}_C(E,\Omega^{-1}C)=0$, and this completes the proof of part (c).
\end{proof}   

\begin{cor}\label{4.7}
If the global dimension of $C$ is at most two, then both $E\oplus C$ and $DE\oplus DC$ are rigid modules. 
\end{cor}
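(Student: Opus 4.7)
The plan is to observe that a direct sum $M \oplus N$ is rigid precisely when all four cross Ext groups $\textup{Ext}^1_C(M,M)$, $\textup{Ext}^1_C(M,N)$, $\textup{Ext}^1_C(N,M)$, $\textup{Ext}^1_C(N,N)$ vanish, since $\textup{Ext}^1_C$ is additive in both arguments. So the corollary reduces to checking the four pieces for each of the two modules in question, and I expect most of them to be handed to us by Lemma \ref{4.6}.

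For the module $E \oplus C$, the expansion gives $\textup{Ext}^1_C(E,E)$, $\textup{Ext}^1_C(E,C)$, $\textup{Ext}^1_C(C,E)$, and $\textup{Ext}^1_C(C,C)$. The first two are zero by parts (c) and (a) of Lemma \ref{4.6} respectively, and the last two vanish because $C$ is projective as a right module over itself. Symmetrically, for $DE \oplus DC$ the four pieces are $\textup{Ext}^1_C(DE,DE)$, $\textup{Ext}^1_C(DE,DC)$, $\textup{Ext}^1_C(DC,DE)$, and $\textup{Ext}^1_C(DC,DC)$. Parts (d) and (b) of Lemma \ref{4.6} dispose of the first and third, while the other two vanish because $DC$ is an injective $C$-module.

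Since this is a bookkeeping corollary of Lemma \ref{4.6} together with the standard projectivity of $C$ and injectivity of $DC$, I do not anticipate any genuine obstacle; the substantive content was already absorbed into the proof of Lemma \ref{4.6} via the identifications $E \cong \tau^{-1}\Omega^{-1}C$ and $DE \cong \tau\Omega DC$ from Proposition \ref{4.1} and the Auslander–Reiten formula. The only thing to be careful about is the direction of the morphisms — we need $\textup{Ext}^1_C(E,C) = 0$ for the first module (which is Lemma \ref{4.6}(a)) and $\textup{Ext}^1_C(DC,DE) = 0$ for the second (which is Lemma \ref{4.6}(b)), while the ``reverse'' Ext groups come for free from projectivity or injectivity, so the four stated parts of Lemma \ref{4.6} are exactly what is needed, with nothing missing or redundant.
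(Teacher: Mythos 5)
Your proof is correct and follows essentially the same approach as the paper's: both decompose the Ext group by additivity, invoke the relevant parts of Lemma \ref{4.6} for the ``nontrivial'' summands, and use the projectivity of $C$ (respectively injectivity of $DC$) for the rest. The paper groups the four cross-terms into two blocks while you list them individually, but this is a cosmetic difference.
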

\begin{proof}
Observe that $\text{Ext}^1_C (E\oplus C, E\oplus C)\cong \text{Ext}_C ^1 (E, E\oplus C) \oplus \text{Ext}^1_C (C, E\oplus C)$.  The first summand is zero because of Lemma \ref{4.6}, and the second is zero because $C$ is projective.  The proof of the rigidity of $DE\oplus DC$ is similar. 
\end{proof}

 Observe that, unless $C$ is hereditary, $E\oplus C$ is rigid but not tilting, because pd$_C E =2$, by Lemma \ref{4.2}. 
Next, we consider the case when $C$ is tilted.  

\begin{lem} \label{4.4}
Let $C$ be a tilted algebra. Then for all {\upshape{$M\in \text{mod} \,C$}} \\
\indent {\upshape{(a)}}  {\upshape{id}}$_C M\otimes E \leq 1$.\\
\indent {\upshape{(b)}}  {\upshape{pd}}$_C D(E\otimes DM) \leq 1$.
\end{lem}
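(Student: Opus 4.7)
The plan is to combine Proposition 4.1, Lemma 4.2, and Proposition 2.2(b). The dichotomy in Proposition 2.2(b) says that over a tilted algebra, every indecomposable module has either projective dimension at most one or injective dimension at most one. Since Lemma 4.2 rules out projective dimension at most one for summands of $M\otimes E$, the dichotomy forces injective dimension at most one. The dual argument handles (b).

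More precisely, for (a), I would first decompose $M\otimes E$ into indecomposable summands and argue summand-wise. By Proposition 4.1(c), $M\otimes E\cong \tau^{-1}\Omega^{-1}M$, so each nonzero indecomposable summand $N$ of $M\otimes E$ lies in $\mathrm{add}(M\otimes E)$. Lemma 4.2(a) then gives $\mathrm{pd}_C N = 2$. Since $C$ is tilted, Proposition 2.2(b) applied to the indecomposable $N$ gives $\mathrm{id}_C N \leq 1$ or $\mathrm{pd}_C N \leq 1$; the latter is excluded, so $\mathrm{id}_C N\leq 1$. Taking the maximum over finitely many summands yields $\mathrm{id}_C(M\otimes E)\leq 1$.

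For (b), the argument is dual. By Proposition 4.1(d), $D(E\otimes DM)\cong \tau\Omega M$, and Lemma 4.2(b) says every nonzero indecomposable summand $N$ of $D(E\otimes DM)$ satisfies $\mathrm{id}_C N = 2$. Again by Proposition 2.2(b), this forces $\mathrm{pd}_C N\leq 1$, and hence $\mathrm{pd}_C D(E\otimes DM)\leq 1$.

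There is no real obstacle here: the proof is essentially a one-line application of the tilted dichotomy once the identifications $M\otimes E\cong \tau^{-1}\Omega^{-1}M$ and $D(E\otimes DM)\cong \tau\Omega M$ are in hand and Lemma 4.2 pins down the ``bad'' homological dimension for these modules. The only thing to be a little careful about is passing to indecomposable summands before invoking Proposition 2.2(b), since the dichotomy is stated for indecomposables; this is harmless because both $\mathrm{pd}$ and $\mathrm{id}$ of a direct sum are the maximum over the summands.
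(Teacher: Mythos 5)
Your proof is correct and follows exactly the same route as the paper: combine Lemma \ref{4.2} (which pins down the ``bad'' homological dimension of each summand) with the tilted dichotomy of Proposition \ref{2.2}(b), arguing summand-wise. The paper states this in one line; you have simply spelled out the summand-wise reduction, which is indeed the implicit content.
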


\begin{proof}
Part (a) follows from Lemma \ref{4.2}(a) and Proposition \ref{2.2}(b).  Similarly, part (b) follows from Lemma \ref{4.2}(b) and Proposition \ref{2.2}(b).  
\end{proof}

Part (a) of the following proposition is well-known, see \cite{ABS, A}. 

\begin{prop}
Let $C$ be a tilted algebra. Then \\
\upshape{\indent (a) $E\otimes E=0$.\\
\indent (b) $D(E\otimes D(DE))=0$.}
\end{prop}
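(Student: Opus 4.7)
The plan is to use the isomorphisms of Proposition \ref{4.1} to rewrite each tensor expression in terms of $\tau$ and $\Omega$, and then exploit the dichotomy for tilted algebras from Proposition \ref{2.2}(b). The upshot is that both vanishings reduce to bounding a dimension (injective for part (a), projective for part (b)) by one, and in each case Lemma \ref{4.2} supplies the complementary dimension equal to two, after which Proposition \ref{2.2}(b) finishes the job.

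For part (a), applying Proposition \ref{4.1}(c) with $M=E$ gives $E\otimes E\cong \tau^{-1}\Omega^{-1}E$. It therefore suffices to show $\textup{id}_C E\le 1$, since then $\Omega^{-1}E$ is injective and is annihilated by $\tau^{-1}$. To obtain the injective dimension bound, I would apply Lemma \ref{4.2}(a) with $M=C$: because $C\otimes E\cong E$, every nonzero indecomposable summand of $E$ has projective dimension exactly $2$. Since $C$ is tilted, Proposition \ref{2.2}(b) then forces each such summand to have injective dimension at most one, giving $\textup{id}_C E\le 1$ and hence (a).

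For part (b) the quickest route is to observe that $D(DE)\cong E$ as $C$-$C$-bimodules, so $D(E\otimes D(DE))\cong D(E\otimes E)=0$ by part (a). Alternatively, one can argue symmetrically to (a): Proposition \ref{4.1}(d) with $M=DE$ yields $D(E\otimes D(DE))\cong \tau\Omega(DE)$, so it suffices to establish $\textup{pd}_C DE\le 1$. Taking $M=DC$ in Lemma \ref{4.2}(b), every nonzero indecomposable summand of $DE\cong D(E\otimes D(DC))$ has injective dimension exactly $2$, and Proposition \ref{2.2}(b) forces the corresponding projective dimensions to be at most one.

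The main (minor) obstacle is bookkeeping of the bimodule structures on $E$ and $DE$ when invoking the canonical isomorphism $D(DE)\cong E$ and when reading the tensor products as right $C$-modules. The representation-theoretic content is entirely carried by the tilted-algebra dichotomy of Proposition \ref{2.2}(b): no indecomposable $C$-module can simultaneously have projective and injective dimension equal to two.
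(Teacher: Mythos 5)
Your proof is correct and takes essentially the same route as the paper. The paper condenses your chain ``Lemma \ref{4.2}(a) with $M=C$, then Proposition \ref{2.2}(b)'' into a citation of Lemma \ref{4.4}(a), which is proved from exactly those two results; and the paper likewise dispatches part (b) as an immediate consequence of part (a) via $D(DE)\cong E$, as you do.
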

\begin{proof}
Part (a). Proposition \ref{4.1}(c) implies that $E\otimes E \cong \tau^{-1}\Omega^{-1}E$, but this is zero since id$_C E \leq 1$, by Lemma \ref{4.4}(a) with $M=C$.   Part (b) follows directly from part (a).
\end{proof}

\begin{remark}
The above proposition does not hold if $C$ has global dimension 2, but is not tilted.  For example, consider the algebra $C$ given by the following quiver with relations.
$$\xymatrix{1\ar[r]^{\alpha}&2\ar[r]^{\beta}&3\ar[r]^{\gamma}&4\ar[r]^{\delta}&5 && \alpha\beta=\gamma\delta=0.}$$
The Auslander-Reiten quiver of $C$ is the following.
\[\xymatrix@!@C=10pt@R=5pt{
&
{\begin{smallmatrix}4\\5\end{smallmatrix}}\ar[dr]&& &&&&{\begin{smallmatrix}1\\2\end{smallmatrix}}\ar[dr] \\
{\begin{smallmatrix}5\end{smallmatrix}}\ar[ur] &&
{\begin{smallmatrix}4\end{smallmatrix}}\ar[dr] &&
{\begin{smallmatrix}3\end{smallmatrix}}\ar[dr] &&
{\begin{smallmatrix}2\end{smallmatrix}}\ar[ur] &&
{\begin{smallmatrix}1\end{smallmatrix}}\\
&&&
{\begin{smallmatrix}3\\4\end{smallmatrix}}\ar[ur]\ar[dr] &&
{\begin{smallmatrix}2\\3\end{smallmatrix}}\ar[ur] &&
\\
&&&&
{\begin{smallmatrix}2\\3\\4\end{smallmatrix}}\ar[ur] &&
}\]
Here $E=S(1)\oplus S(3)$, where $S(i)$ is the simple module at vertex $i$.  Hence, $E\otimes E \cong \tau^{-1}\Omega^{-1} (S(1)\oplus S(3))=S(1)$.  
\end{remark}

This particular algebra $C$ also appears in \cite[Remark 4.21]{BFPPT}, where the authors stress that $C\ltimes E$ is not a cluster-tilted algebra.  On the other hand, the tensor algebra $T_C (E)$, which in this case equals $C \oplus E \oplus (E\otimes_C E)$, is a cluster-tilted algebra.

As a consequence of  Proposition \ref{4.3} and Lemma \ref{3.8} we obtain the following corollary, which says that a slice in a tilted algebra together with its $\tau$ and $\tau^{-1}$-translates fully embeds in the cluster-tilted algebra. This result was already shown in [ABS4] relying on the main theorem of [AZ]. Here we present a new proof using induction and coinduction functors. 

\begin{cor}\label{4.5}
Let $C$ be a tilted algebra and $B$ the corresponding cluster-tilted algebra.  Let $\Sigma$ be a slice in $\textup{mod}\,C$ and $M$ a module in $\Sigma$. Then \\
\indent{\upshape{(a)}} $\tau_C M\cong \tau_B M$,\\
\indent{\upshape{(b)}} $\tau^{-1}_CM\cong \tau_B^{-1} M$.
\end{cor}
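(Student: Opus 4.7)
The plan is to combine Lemma~\ref{3.8} with Proposition~\ref{4.3}: the former relates $\tau_B$ on an induced module and $\tau_B^{-1}$ on a coinduced module to $\tau_C$ on the original module, while the latter says the induction and coinduction functors act as the identity on modules of projective, respectively injective, dimension at most one.

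First I would collect the dimensional data for $M$ and its AR-translates. By Theorem~\ref{2.7}, $\Sigma=\textup{Hom}_A(T,DA)$, so $M\cong\textup{Hom}_A(T,I)$ for some indecomposable injective $A$-module $I$. As $I\in\mathcal{T}(T)$, Theorem~\ref{2.1} gives $M\in\mathcal{Y}(T)$, whence $\textup{pd}_C M\le 1$ by Proposition~\ref{2.2}(d). The AR-translate $\tau_C M$ is a predecessor of $M$, and since $\mathcal{Y}(T)$ is closed under predecessors (Proposition~\ref{2.2}(f)), we also get $\tau_C M\in\mathcal{Y}(T)$ and $\textup{pd}_C\tau_C M\le 1$. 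For the injective dimension of $M$: via the derived tilting equivalence and the injectivity of $I$ in the hereditary algebra $A$, a short computation shows that $\textup{Ext}^2_C(-,M)=0$, so $\textup{id}_C M\le 1$; equivalently, one verifies $\tau^{-1}\Omega^{-1}M=0$ using Propositions~\ref{4.1}(c) and~\ref{4.3}(a). A symmetric Brenner--Butler argument places $\tau_C^{-1}M$ in $\mathcal{X}(T)$ whenever it is nonzero (the alternative $\tau_C^{-1}M\in\mathcal{Y}(T)$ would force the existence of a nonzero $Y\in\mathcal{T}(T)$ with $\textup{Hom}_A(T,Y)\cong\tau_C^{-1}M$, forcing in turn $\tau_A^{-1}I\ne 0$, which contradicts the injectivity of $I$), whence $\textup{id}_C\tau_C^{-1}M\le 1$ by Proposition~\ref{2.2}(c).

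For part~(a), Lemma~\ref{3.8}(a) yields $\tau_B(M\otimes_C B)\cong D(B\otimes_C D(\tau_C M))$. Proposition~\ref{4.3}(a) applied to $M$ (with $\textup{id}_C M\le 1$) reduces the left-hand side to $\tau_B M$, while Proposition~\ref{4.3}(b) applied to $\tau_C M$ (with $\textup{pd}_C\tau_C M\le 1$) reduces the right-hand side to $\tau_C M$. The isomorphism $\tau_B M\cong \tau_C M$ follows. Part~(b) is entirely symmetric: Lemma~\ref{3.8}(b) gives $\tau_B^{-1}D(B\otimes_C DM)\cong (\tau_C^{-1}M)\otimes_C B$, and then Proposition~\ref{4.3}(b) applied to $M$ together with Proposition~\ref{4.3}(a) applied to $\tau_C^{-1}M$ yields $\tau_B^{-1}M\cong\tau_C^{-1}M$. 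The edge cases $\tau_C M=0$ or $\tau_C^{-1}M=0$ are handled in the same way, since all the relevant isomorphisms reduce to $0\cong 0$.

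The main obstacle will be establishing the injective-dimension bound $\textup{id}_C M\le 1$ for slice modules, since this particular homological fact is not among those explicitly recalled in Section~\ref{sect:basics}. The cleanest route I see passes through the identification $M\otimes_C E\cong \tau^{-1}\Omega^{-1}M$ of Proposition~\ref{4.1}(c), reducing the claim to $\tau^{-1}\Omega^{-1}M=0$; the structure $M=\textup{Hom}_A(T,I)$ with $I$ injective in the hereditary algebra $A$ then makes the vanishing a short \textup{Ext}-computation.
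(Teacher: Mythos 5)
Your overall strategy is exactly the one the paper uses: combine Lemma~\ref{3.8}, which transports $\tau$ across the induction/coinduction functors, with Proposition~\ref{4.3}, which says these functors are the identity on modules with, respectively, injective or projective dimension at most one. For part~(a) your argument is essentially a verbatim match to the paper's: you establish $\mathrm{id}_C M\le 1$ and $\mathrm{pd}_C\tau_C M\le 1$ (the latter via $\tau_C M\in\mathcal{Y}(T)$ by predecessor-closedness) and then invoke Lemma~\ref{3.8}(a) together with Proposition~\ref{4.3}.

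Where I would push back is the justification you give in part~(b) for $\tau_C^{-1}M\in\mathcal{X}(T)$. You argue that $\tau_C^{-1}M\in\mathcal{Y}(T)$ would ``force the existence of a nonzero $Y\in\mathcal{T}(T)$ with $\mathrm{Hom}_A(T,Y)\cong\tau_C^{-1}M$, forcing in turn $\tau_A^{-1}I\ne 0$.'' The second implication is not justified: the Brenner--Butler equivalence $\mathrm{Hom}_A(T,-)\colon \mathcal{T}(T)\to\mathcal{Y}(T)$ does \emph{not} commute with the Auslander--Reiten translation, so there is no reason why $\tau_C^{-1}\mathrm{Hom}_A(T,I)\cong\mathrm{Hom}_A(T,Y)$ should imply $Y\cong\tau_A^{-1}I$. (Indeed, the ``connecting'' phenomenon across a slice is precisely what happens when $\tau$ does not transport through the tilting functor.) The conclusion $\tau_C^{-1}M\in\mathcal{X}(T)$ is correct, but the clean argument is via the slice itself: by slice condition (iii), $\tau_C^{-1}M\notin\Sigma$, and since $\tau_C^{-1}M$ is a proper successor of $M\in\Sigma$, condition (ii) prevents it from being a predecessor of $\Sigma$, so by the splitting property (Proposition~\ref{2.2}(e)) it must lie in $\mathcal{X}(T)$. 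Alternatively, part~(b) follows from part~(a) applied to $C^{\mathrm{op}}$ and $DM$, since the standard duality sends slices to slices and swaps $\tau$ with $\tau^{-1}$. Similarly, the parenthetical claim that ``one verifies $\tau^{-1}\Omega^{-1}M=0$ using Propositions~\ref{4.1}(c) and~\ref{4.3}(a)'' is circular --- Proposition~\ref{4.3}(a) is exactly the statement you are trying to establish the hypothesis of. The underlying fact $\mathrm{id}_C M\le 1$ for $M$ on a slice is standard and the paper simply asserts it; if you want a proof, the slice-separation argument just described gives $\tau_C^{-1}M\in\mathcal{X}(T)\cup\{0\}$ directly, but that is the $\tau^{-1}$-translate, not $M$ itself; for $M$ itself the cleanest route is ASS VIII.3 or to observe that $M$ lies on the Brenner--Butler ``boundary'' and is both in $\mathcal{Y}(T)$ and a cokernel of a map between modules in $\mathrm{add}\,\mathrm{Hom}_A(T,T)$.
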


\begin{proof}
We show part (a) and the proof of part (b) is similar.  Since the module $M$ lies on a slice in mod$\,C$ then pd$_C M\leq 1$ and id$_C M\leq 1$.  By Lemma \ref{3.8}(a) we have the following isomorphism 
$$\tau_B (M\otimes _C B)\cong D(B\otimes _C D(\tau _C M)).$$
It follows from Proposition \ref{4.3}(a) that the left hand side is isomorphic to $\tau_B M$.  It remains to show that the right hand side is isomorphic to $\tau_C M$. We may suppose without loss of generality that $C=\text{End}_A T$ and $\Sigma=\text{Hom}_A(T,DA)$ lies in $\mathcal{Y}(T)$.  Hence, in particular $M\in\mathcal{Y}(T)$.  Proposition \ref{2.2}(f) implies that $\tau_C M \in \mathcal{Y}(T)$, so part (d) of the same proposition shows that pd$_C \tau_C M \leq 1$.  Therefore, $D(B\otimes _C D (\tau_C M))\cong \tau_C M$ by Proposition \ref{4.3}(b), and this finishes the proof. 
\end{proof}

The following theorem is the main result of this section. Following \cite{AIR} we say that a $\Lambda$-module $M$ is $\tau_\Lambda$\emph{-rigid} if $\text{Hom}_\Lambda(M,\tau_\Lambda M)=0$,  and similarly that $M$ is $\tau_\Lambda$\emph{-corigid} if $\text{Hom}_{\Lambda}(\tau^{-1}_{\Lambda} M, M)=0$.  Also, $M$ is called \emph{partial cotilting} if id$_\Lambda \leq 1$ and $\text{Ext}^1_{\Lambda}(M,M)=0$. 

\begin{thm}\label{4.8}
If $C$ is a tilted algebra and $B$ is the corresponding cluster-tilted algebra, then \\
\indent{\upshape{(a)}}  $DE$ is a partial tilting and $\tau_C$-rigid $C$-module, and its corresponding induced module $DE\otimes B$ is a partial tilting and $\tau_B$-rigid $B$-module.\\
\indent{\upshape{(b)}} $E$ is a partial cotilting and $\tau_C$-corigid $C$-module, and its corresponding coinduced module $D(B\otimes DE)$ is a partial cotilting and $\tau_B$-corigid $B$-module. 
\end{thm}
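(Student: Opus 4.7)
The plan is to prove part (a) directly and derive (b) by the mirror argument. In each case, the strategy is to first establish the $C$-module statement and then invoke Theorem~\ref{3.9} to transfer it to $B$. The key observation is that the splitting torsion pair $(\mathcal{X}(T),\mathcal{Y}(T))$ on $\textup{mod}\,C$ annihilates the Hom-spaces appearing in the extra hypotheses of Theorem~\ref{3.9}.

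For part (a), Lemma~\ref{4.4}(b) with $M=DC$ gives $\textup{pd}_C DE = \textup{pd}_C D(E\otimes_C C)\leq 1$, while Lemma~\ref{4.6}(d) gives $\textup{Ext}^1_C(DE,DE)=0$, so $DE$ is partial tilting over $C$. To see that it is $\tau_C$-rigid, I use the minimal projective resolution $0\to P_1\to P_0\to DE\to 0$ (exact because $\textup{pd}_C DE\leq 1$): applying $\textup{Hom}_C(-,DE)$ and using $\textup{Ext}^1(DE,DE)=0$ forces $\textup{Hom}_C(P_0,DE)\to\textup{Hom}_C(P_1,DE)$ to be surjective, which by the Adachi--Iyama--Reiten characterization of $\tau$-rigidity via projective presentations yields $\textup{Hom}_C(DE,\tau_C DE)=0$. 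To pass to $B$, I apply Theorem~\ref{3.9}(a) with $T=DE$; beyond partial tilting and the $\tau_C$-rigidity already established, only the vanishing $\textup{Hom}_C(DE\otimes_C E,\tau_C DE)=0$ remains. By Lemma~\ref{4.2}(b) with $M=DC$, every nonzero indecomposable summand of $DE$ has $\textup{id}=2$, hence by Proposition~\ref{2.2}(c) and (e) lies in $\mathcal{Y}(T)$; Proposition~\ref{2.2}(f) then forces $\tau_C DE\in\mathcal{Y}(T)$, since $\tau_C$ produces predecessors via the Auslander--Reiten sequence. On the other side, $DE\otimes_C E\cong\tau_C^{-1}\Omega^{-1}DE$ by Proposition~\ref{4.1}(c), and every nonzero summand of this has $\textup{pd}=2$ by Lemma~\ref{4.2}(a), so it lies in $\mathcal{X}(T)$. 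The torsion-pair axiom $\textup{Hom}_C(\mathcal{X}(T),\mathcal{Y}(T))=0$ gives the desired vanishing. Theorem~\ref{3.9}(a) then shows $DE\otimes_C B$ is a partial tilting $B$-module, and re-running the Adachi--Iyama--Reiten argument now over $B$ yields $\tau_B$-rigidity.

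Part (b) is the dual. Lemma~\ref{4.4}(a) with $M=C$ gives $\textup{id}_C E\leq 1$, Lemma~\ref{4.6}(c) gives $\textup{Ext}^1_C(E,E)=0$, and $\tau_C$-corigidity of $E$ follows by dualizing the argument of part (a) over $C^{\text{op}}$. For Theorem~\ref{3.9}(b) with $T=E$, the single remaining vanishing to check is $\textup{Hom}_C(\tau_C^{-1}E,D(E\otimes_C DE))=0$. Lemma~\ref{4.2}(a) with $M=C$ puts $E$ into $\mathcal{X}(T)$ (nonzero summands have $\textup{pd}=2$), so the successor-closure of $\mathcal{X}(T)$ gives $\tau_C^{-1}E\in\mathcal{X}(T)$; by Proposition~\ref{4.1}(d) the target is $\tau_C\Omega E$, whose summands have $\textup{id}=2$ by Lemma~\ref{4.2}(b) with $M=E$, placing it in $\mathcal{Y}(T)$; the torsion-pair axiom finishes the verification. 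The main obstacle is lining up each auxiliary module with the correct torsion class using the pd/id information from Lemma~\ref{4.2}; once this dictionary is in place, both extra hypotheses of Theorem~\ref{3.9} collapse to a single invocation of $\textup{Hom}_C(\mathcal{X}(T),\mathcal{Y}(T))=0$.
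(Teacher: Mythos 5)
Your proof is correct and follows the same overall strategy as the paper: establish the statements over $C$ from Lemmas~\ref{4.2}, \ref{4.4}, \ref{4.6}, then invoke Theorem~\ref{3.9} by placing the relevant modules into $\mathcal{X}(T)$ and $\mathcal{Y}(T)$ via the pd/id dichotomy of Proposition~\ref{2.2} and using $\textup{Hom}_C(\mathcal{X}(T),\mathcal{Y}(T))=0$. There is, however, one genuine divergence worth noting. For the final step, $\tau_B$-rigidity of $DE\otimes_C B$, you observe that once this module is known to be a partial tilting $B$-module the claim follows automatically (a partial tilting module $M$ has $\textup{pd}\,M\leq1$, and then the Auslander--Reiten formula or, equivalently, the AIR surjectivity criterion on a short exact projective resolution gives $\textup{Hom}(M,\tau M)\cong D\textup{Ext}^1(M,M)=0$). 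The paper instead computes $\tau_B(DE\otimes_C B)\cong\tau_C DE$ explicitly via Lemma~\ref{3.8} and Proposition~\ref{4.3}, and then runs a factorization argument through the short exact sequence of Proposition~\ref{3.6}(a). Your route is shorter and entirely valid; the paper's route has the advantage of producing the explicit identification of $\tau_B(DE\otimes_C B)$ with $\tau_C DE$, which is of independent interest, but is not logically necessary for the statement. Your argument for $\tau_C$-rigidity of $DE$ via the projective presentation is an equivalent reformulation of the paper's direct application of the Auslander--Reiten formula. One small presentational gap: in part (b) you verify that $D(B\otimes_C DE)$ is partial cotilting but do not explicitly spell out its $\tau_B$-corigidity; by the same observation as in part (a) (partial cotilting over $B$ implies $\tau_B$-corigid via the dual AR formula), this follows immediately, but it should be stated.
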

\begin{proof}
We show part (a), and the proof of part (b) is similar.  First let us show that $DE$ is a partial tilting and $\tau_C$-rigid $C$-module.  Because $C$ is tilted, Lemma \ref{4.4}(b) implies that pd$_C DE \leq 1$, but also  $DE$ is rigid by Lemma \ref{4.6}(d).  This shows that $DE$ is a partial tilting $C$-module.  On the other hand,  by Lemma \ref{4.2}(b), all nonzero indecomposable summands of $DE$ have injective dimension 2.  Thus,  pd$_C DE\leq 1$, by Proposition \ref{2.2}(b).  So, applying the Auslander-Reiten formula we have $\text{Hom}_C(DE, \tau_C DE) \cong D\text{Ext}_C ^1 (DE, DE)=0$, where the last step follows from Lemma \ref{4.6}(d).  This shows that $DE$ is $\tau_C$-rigid.

Now, it remains to shows that  $DE\otimes B$ is a partial tilting and $\tau_B$-rigid $B$-module.  First we show that $DE\otimes B$ is partial tilting.  From above we know that $DE$ is a partial tilting $C$-module, thus by Theorem \ref{3.9} it suffices to show that Hom$_C(DE\otimes_C E, \tau_C DE)=0$ and Hom$_C(DE, \tau_C DE)=0$.  The second identity follows from the work above, so we need to show the first identity.  Observe that by Lemma \ref{4.2}(a), pd$_C DE\otimes E =2$ .  Then Proposition \ref{2.2} implies that $DE\otimes E \in \mathcal{X}(T)$.  Similarly, by Lemma \ref{4.2}(b),  id$_C DE =2$, so Proposition \ref{2.2} implies that $DE\in \mathcal{Y}(T)$.  However, by Proposition \ref{2.2}(f), $\mathcal{Y}(T)$ is closed under predecessors, which means $\tau_C DE \in \mathcal{Y}(T)$.  By definition of a torsion pair there are no nonzero morphisms from $\mathcal{X}(T)$ to $\mathcal{Y}(T)$, so Hom$_C(DE\otimes E, \tau_C DE)=0$.  This shows that $DE\otimes B$ is a partial tilting $B$-module.  

Now we show that $DE\otimes B$ is $\tau_B$-rigid, that is Hom$_B(DE\otimes B, \tau_B(DE\otimes B))=0$.  First observe that Lemma \ref{3.8}(a) yields $\tau_B(DE\otimes B)\cong D(B\otimes D\tau_C DE)$, which in turn by Proposition \ref{4.3}(b) is isomorphic to $\tau_C DE$.  Let $f\in \text{Hom}_B(DE\otimes B, \tau_C DE)$. Then we have the following diagram, whose top row is the short exact sequence of  Proposition \ref{3.6}(a).
$${\xymatrix {0\ar[r]&DE\otimes E \ar[r]^{i_*}&DE\otimes B \ar[r]^-{\pi_*}\ar[d]^{f}&DE\ar[r]\ar@{.>}[dl]^g&0\\
&&\tau_C DE&&}}. $$
Observe that $fi_*\in\text{Hom}_C(DE\otimes E, \tau_C DE)$, which is zero by our calculation above.  Next, the universal property of coker$\,i_*$ implies that there exists $g\in\text{Hom}_C(DE, \tau_C DE)$ such that $g\pi_*=f$.  However, we know that $DE$ is $\tau _C$-rigid, which  implies that $g=0$.  This means $f=0$.  Thus, we conclude that $DE\otimes B$ is $\tau_B$-rigid.  
\end{proof}

\begin{remark}
Unlike $DE$, the $C$-module $E$ is not $\tau_C$-rigid, that is Hom$_C(E,\tau_C E) \not=0$.  Consider, for example, the tilted algebra $C$  of type $\widetilde{\mathbb{A}}_{(2,2)}$ given by the following quiver with relations.
$${\xymatrix@R-=6cm {&2&\\1\ar[ur]^{\alpha}\ar[dr]_{\beta}&&4 \ar@<-.5ex>[ll]_{\delta} \ar@<.5ex>[ll]^{\gamma}&& \delta \alpha = \gamma\beta = 0.\\&3&}} $$
Here $E=\begin{smallmatrix} 4\\ 1\\2 \end{smallmatrix} \oplus \begin{smallmatrix} 4\\1\\3\end{smallmatrix} $ and $\tau _C E = \begin{smallmatrix} 4\\ 1 \end{smallmatrix} \oplus \begin{smallmatrix} 4\\1\end{smallmatrix} $.

Also, in this case $DE = (2 \oplus 3) \oplus 2 \oplus 3 \oplus (2\oplus 3)$, a direct sum of six simple projective $C$-modules. % However, up to isomorphisms $DE$ consists of only two distinct summands.  
It is easy to see that $DE$ and $DE\otimes B$ are partial tilting and $\tau$-rigid in mod$\,C$ and mod$\,B$ respectively, because $DE$ is a projective $C$-module.
\end{remark}

\begin{exmp}

Let $C$ be the tilted algebra of type $\mathbb{D}_4$ given by the following quiver with relations.  

$${\xymatrix@R-=6cm {&2&\\1\ar[ur]^{\alpha}\ar[dr]_{\beta}&&4 \ar[ll]_{\delta} && \delta \alpha = \delta\beta = 0.\\&3&}} $$
Observe that $DE$ equals the indecomposable module  $\begin{smallmatrix}1\\ 2\;\;3 \end{smallmatrix}$.  On the other hand, $E$ equals $4\oplus 4\oplus 4$, a direct sum of three isomorphic simple modules.

\end{exmp}

\section{Relation to Cluster Categories}\label{sect 6}
In this section we study the relationship between induction and coinduction functors and the cluster category.  Here, we assume that $A$ is a hereditary algebra and $T\in$ mod$\,A$ is a basic tilting module.  Let $C=\text{End}_A T$ be the corresponding tilted algebra, and $B$ be the associated cluster-tilted algebra.   Finally, let $\mathcal{C}_A$ denote the cluster category of $A$.  We know that mod$\,A$ naturally embeds in $\mathcal{C}_A$, which in turn maps surjectively onto mod$\,B$ via the functor $\text{Hom}_{\mathcal{C}_A}(T,-)$.  Recall that the induction functor $-\otimes_C B$ and the coinduction functor $D(B\otimes_C D-)$ both map mod$\,C$ to mod$\,B$, while the module categories of $A$ and $C$ are closely related via the Tilting Theorem \ref{2.1}. Now, we want to study how the induction  and the coinduction functors fit into this larger picture. 
$$\xymatrix@C=3cm @R=1.2cm{\text{mod}\,A\ar@{<->}[r]^{\text{Tilting Theorem}}\ar[d]&\text{mod}\,C\ar[d]^{\begin{smallmatrix} -\otimes _C B\\\\D(B\otimes_C D-)\end{smallmatrix}}\\ \mathcal{C}_A\ar[r]^{\text{Hom}_{\mathcal{C}_A}(T,-)}&\text{mod}\,B}$$

The following theorem describes the relationship between the induction functor and the cluster category. 

\begin{thm}\label{9.1}
Let $A$ be a hereditary algebra and {\upshape $T\in \text{mod}\, A$} a basic tilting module.  Let $\mathcal{C}_A$ be the cluster category of $A$,  {\upshape $C=\text{End}_A T$} be the corresponding tilted algebra, and $B=C\ltimes E$  the corresponding cluster-tilted algebra.  Recall the definitions of $\mathcal{T}(T)$ and $\mathcal{Y}(T)$, the associated torsion class of {\upshape mod$\,A$} and the torsion free class of {\upshape mod$\,C$} below.
{\upshape $$\xymatrix{\mathcal{T}(T)=\{M\in\text{mod}\,A\mid \text{Ext}^1_A(T,M)=0\} & \mathcal{Y}(T)=\{N\in\text{mod}\,C\mid\text{Tor}^C_1(N,T)=0\}}$$}Then the following diagram commutes.  
{\upshape $$\xymatrix@C=3cm{\mathcal{T}(T)\ar[r]^{\text{Hom}_A(T,-)}\ar[d]&\mathcal{Y}(T)\ar[d]^{-\otimes _C B}\\ \mathcal{C}_A\ar[r]^{\text{Hom}_{\mathcal{C}_A}(T,-)}&\text{mod}\,B}$$}That is, {\upshape $\text{Hom}_A(T,M)\otimes_C B\cong \text{Hom}_{\mathcal{C}_A}(T, M)$} for every $M\in\mathcal{T}(T)$.  
\end{thm}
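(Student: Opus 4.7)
The plan is to construct, for every $M\in\textup{mod}\,C$, a natural $B$-linear map
\[\Psi_M: \textup{Hom}_A(T,M)\otimes_C B \longrightarrow \textup{Hom}_{\mathcal{C}_A}(T,M), \qquad f\otimes b \longmapsto f\circ b,\]
with composition taken in the cluster category (here $f\in\textup{Hom}_A(T,M)$ is regarded as an element of $\textup{Hom}_{\mathcal{C}_A}(T,M)$ through the embedding $\textup{mod}\,A\hookrightarrow \mathcal{C}_A$, and $b\in B=\textup{End}_{\mathcal{C}_A}(T)$ as an endomorphism of $T$ in $\mathcal{C}_A$), and then to prove that $\Psi_M$ is an isomorphism whenever $M\in\mathcal{T}(T)$.

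The first step is to simplify the cluster-category Hom using that $A$ is hereditary. From the orbit-category description $\mathcal{C}_A=\mathcal{D}/F$ with $F=\tau_\mathcal{D}^{-1}[1]$, one has $\textup{Hom}_{\mathcal{C}_A}(T,X)=\bigoplus_i \textup{Hom}_\mathcal{D}(T,F^i X)$, and for $X\in\textup{mod}\,A$ only the indices $i=0,1$ survive, because $|i|\geq 2$ contributes an Ext of degree $\geq 2$ over the hereditary algebra $A$. This gives a natural decomposition
\[\textup{Hom}_{\mathcal{C}_A}(T,X)\;=\;\textup{Hom}_A(T,X)\,\oplus\,\textup{Hom}_\mathcal{D}(T,FX).\]
Taking $X=T$ identifies the bimodule $E$ with $\textup{Hom}_\mathcal{D}(T,FT)$ and recovers $B=C\oplus E$. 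Combined with the splitting $\textup{Hom}_A(T,M)\otimes_C B \cong \textup{Hom}_A(T,M)\oplus(\textup{Hom}_A(T,M)\otimes_C E)$ coming from Proposition~\ref{3.6}(a), $\Psi_M$ becomes the identity on the first summand, so the entire statement reduces to proving that
\[\Phi_M: \textup{Hom}_A(T,M)\otimes_C E \longrightarrow \textup{Hom}_\mathcal{D}(T,FM), \qquad f\otimes e\longmapsto F(f)\circ e,\]
is an isomorphism for $M\in\mathcal{T}(T)$.

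The second step checks $\Phi_M$ on $M=T$: both sides equal $E$ and $\Phi_T$ is the identity, so $\Phi$ is an iso on every object of $\textup{add}\,T$. To bootstrap to arbitrary $M\in\mathcal{T}(T)$, I use that $T$ generates the torsion class: $M$ admits a presentation $T_1\to T_0\to M\to 0$ with $T_i\in\textup{add}\,T$ and with the kernel $K$ of $T_0\to M$ again in $\mathcal{T}(T)$ (by the long exact Ext-sequence together with $\textup{Ext}_A^{\geq 2}=0$). Applying $\Phi$ produces a commutative ladder in which both rows are right exact—the top because $\textup{Hom}_A(T,-)$ is exact on short exact sequences within $\mathcal{T}(T)$ and $-\otimes_C E$ is right exact; the bottom because $F$ sends the short exact sequence to a triangle and the obstruction $\textup{Hom}_\mathcal{D}(T,FK[1])=\textup{Ext}^2_A(T,\tau^{-1}K)$ vanishes by hereditariness. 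Since $\Phi_{T_0},\Phi_{T_1}$ are iso, the five-lemma forces $\Phi_M$ to be iso.

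The main obstacle is ensuring that the identifications actually respect the $B$-module (not merely $C$-module) structure: the right action of $E\subset B$ on $\textup{Hom}_{\mathcal{C}_A}(T,M)$ sends the degree-$0$ summand into the degree-$1$ summand by precisely the formula defining $\Phi_M$, and it acts as zero on the degree-$1$ summand because $F^2$ lands in $\textup{Ext}^2=0$—which matches $E\cdot E=0$ in the trivial extension and the $B$-action $n\otimes 1\mapsto n\otimes e$, $n\otimes e\mapsto n\otimes e\cdot e'=0$ on $\textup{Hom}_A(T,M)\otimes_C B$. Once this bookkeeping of the $B$-action is done, $\Psi_M$ is the required $B$-module isomorphism.
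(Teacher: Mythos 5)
Your proposal is correct in substance, and it takes a genuinely different route from the paper's proof. The paper never exhibits an explicit map; it instead compares two projective presentations of the two modules in $\textup{mod}\,B$ (one obtained by applying $-\otimes_C B$ to a minimal projective $C$-resolution of $\textup{Hom}_A(T,M)$, the other by applying $\textup{Hom}_{\mathcal{C}_A}(T,-)$ to the associated triangle $T^1\to T^0\to M\to T^1[1]$), checks that the two maps $P^1_B\to P^0_B$ coincide on a generating set (``on $\textup{top}\,P^1_B$''), and invokes the Five Lemma to produce an isomorphism. You, by contrast, write down the canonical $B$-linear natural transformation $\Psi_M(f\otimes b)=f\circ b$ at the outset, exploit the degree decomposition $\textup{Hom}_{\mathcal{C}_A}(T,X)\cong\textup{Hom}_A(T,X)\oplus\textup{Hom}_{\mathcal D}(T,FX)$ valid over a hereditary algebra, observe that $\Psi_M$ is the identity on degree~$0$, and reduce to the degree-$1$ component $\Phi_M$, which you then prove to be an isomorphism by a Five Lemma bootstrap starting from $\textup{add}\,T$. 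Your version is more conceptual and shows explicitly that the isomorphism is natural, which the paper leaves implicit; the paper's version is more hands-on and sidesteps the orbit-category bookkeeping. Both use the same technical engine (an $\textup{add}\,T$-presentation of $M$ inside $\mathcal T(T)$, translated via Brenner--Butler, plus the Five Lemma).

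Two small imprecisions worth tightening. First, the vanishing of the Hom-spaces $\textup{Hom}_{\mathcal D}(T,F^iX)$ for $T,X\in\textup{mod}\,A$ is not a matter of ``$|i|\geq 2$'': you need all $i\notin\{0,1\}$, and the case $i=-1$ vanishes for a different reason (negative-degree Ext, or $\nu X[-2]$ if $X$ is projective) than $i\geq 2$ does. Second, when you write the obstruction as $\textup{Ext}^2_A(T,\tau^{-1}K)$, be aware that $K$ may have injective direct summands, for which $\tau_{\mathcal D}^{-1}$ produces a shifted projective rather than $\tau^{-1}$ of a module; the term still vanishes ($\textup{Hom}_{\mathcal D}(T,P[3])=0$), but the formula $\textup{Ext}^2_A(T,\tau^{-1}K)$ is literally meaningful only on the non-injective part. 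Neither point affects the validity of the argument.
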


\begin{proof}
Let $M$ be an indecomposable module belonging to $\mathcal{T}(T)$.  If $M\in\text{add}\,T$, then $\text{Hom}_A(T,M)$ is a projective $C$-module, and $\text{Hom}_A(T,M)\otimes_C B$ is the corresponding projective $B$-module.  On the other hand, $\text{Hom}_{\mathcal{C}_A}(T,M)$ is also the same projective $B$-module.  Hence, in this case the theorem above holds.  If $M\not\in \text{add}\,T$, then Proposition \ref{2.2}(d) implies that the projective dimension of $\text{Hom}_A(T,M)\in\mathcal{Y}(T)$ is one.  Let 
\begin{equation} \label{12} \xymatrix{0\ar[r]&P_C^1\ar[r]^{f}&P_C^0\ar[r]&\text{Hom}_A(T,M)\ar[r]&0} \end{equation}
be its minimal projective resolution in mod$\,C$.  By Theorem \ref{2.1}(a), there exist $T^0, T^1\in\text{add}\,T$, and $g\in\text{Hom}_A(T^1,T^0)$ such that 
$$\xymatrix{f=\text{Hom}_A(T,g),&P^1_C=\text{Hom}_A(T,T^1),&P^0_C=\text{Hom}_A(T,T^0).}$$
Moreover, since $M\in\mathcal{T}(T)$, we have that $M=\text{coker}\,g$.  Applying $-\otimes_C T$ to the projective resolution (\ref{12}) and using Theorem \ref{2.1}(a), we obtain an exact sequence 
$$\xymatrix{\text{Tor}_1^C(\text{Hom}_A(T,M),T)\ar[r]&T^1\ar[r]^{g}&T^0\ar[r]&M\ar[r]&0.}$$
Furthermore, $\text{Tor}_1^C(\text{Hom}_A(T,M),T)=0$ by Tilting Theorem \ref{2.1}, which means that the sequence above is a short exact sequence in mod$\,A$.  Observe that $g$ is also a morphism in the cluster category, and the short exact sequence above corresponds to a triangle 
$$\xymatrix{T^1\ar[r]^g&T^0\ar[r]&M\ar[r]&T^1[1]}$$
in $\mathcal{C}_A$.  Applying $\text{Hom}_{\mathcal{C}_A}(T,-)$ to this triangle yields an exact sequence in mod$\,B$ 
$$\xymatrix{\text{Hom}_{\mathcal{C}_A}(T,T^1)\ar[r]^{g_*}&\text{Hom}_{\mathcal{C}_A}(T,T^0)\ar[r]&\text{Hom}_{\mathcal{C}_A}(T,M)\ar[r]&0.}$$
Note that there is zero on the right of the sequence as $\text{Hom}_{\mathcal{C}_A}(T,T^1[1])=0$, because $T$ is a tilting object in $\mathcal{C}_A$.  Also, $\text{Hom}_{\mathcal{C}_A}(T,T^1)=P_B^1$ and $\text{Hom}_{\mathcal{C}_A}(T,T^0)=P_B^0$,  thus 
$$\text{Hom}_{\mathcal{C}_A}(T,M)=\text{coker}\,\text{Hom}_{\mathcal{C}_A}(T,g):P_B^1\rightarrow P_B^0.$$  
On the other hand, applying the induction functor $-\otimes _C B$ to the sequence (\ref{12}) we obtain 
$$\xymatrix{P^1_B\ar[r]^{f\otimes 1}&P^0_B\ar[r]&\text{Hom}_A(T,M)\otimes_C B\ar[r]&0.}$$
Hence, we see that 
$$\text{Hom}_A(T,M)\otimes_C B = \text{coker}\,\text{Hom}_A(T,g)\otimes 1:P^1_B\rightarrow P_B^0.$$
But since $\text{top}\,P^1_B=\text{top}\,P^1_C$, we have 
$$\text{Hom}_{\mathcal{C}_A}(T,g)(\text{top}\,P^1_B)=\text{Hom}_A(T,g)(\text{top}\,P^1_C)=\text{Hom}_A(T,g)\otimes 1\, (\text{top}\,P^1_B).$$  
Because $P^1_B$ is projective both maps $\text{Hom}_{\mathcal{C}_A}(T,g)$ and $\text{Hom}_A(T,g)\otimes 1$ are determined by their restrictions to $\text{top}\,P^1_B$. Therefore, we construct a commutative diagram 
$$\xymatrix@C=2.5cm{P^1_B\ar[r]^{\text{Hom}_{\mathcal{C}_A}(T,g)}\ar@{=}[d]&P^0_B\ar[r]\ar[d]^{\cong}&\text{Hom}_{\mathcal{C}_A}(T,M)\ar[r]\ar[d]^{h}&0\\
P^1_B\ar[r]^{\text{Hom}_A(T,g)\otimes 1}&P^0_B\ar[r]&\text{Hom}_A(T,M)\otimes_C B\ar[r]&0}$$
where $h$ is an isomorphism by the Five Lemma.  This shows that for every $M\in \mathcal{T}(T)$ we have ${\text{Hom}_{\mathcal{C}_A}(T,M)\cong \text{Hom}_A(T,M)\otimes_C B}$, and finishes the proof of the theorem.
\end{proof}

This result shows that the induction functor can be reformulated in terms of other well-studied functors.  In particular consider the following corollary.  Recall the definitions of $\mathcal{F}(T)$ and $\mathcal{X}(T)$, the associated torsion free class of {\upshape mod$\,A$} and the torsion class of {\upshape mod$\,C$} below.
{\upshape $$\xymatrix{\mathcal{F}(T)=\{M\in\text{mod}\,A\mid \text{Hom}_A(T,M)=0\} & \mathcal{X}(T)=\{N\in\text{mod}\,C\mid N\otimes_C T=0\}}$$}

\begin{cor} \label{9.5}
Let $A$ be a hereditary algebra and {\upshape $T\in \text{mod}\, A$} a basic tilting module.  Let $\mathcal{C}_A$ be the cluster category of $A$,  {\upshape $C=\text{End}_A T$} be the corresponding tilted algebra, and $B=C\ltimes E$  the corresponding cluster-tilted algebra. Then
{\upshape \begin{displaymath}
   M\otimes_C B \cong \left\{
     \begin{array}{lrr}
       \text{Hom}_{\mathcal{C}_A}(T,M\otimes_C T) & \text{if}&M\in\mathcal{Y}(T)\\
       M & \text{if} & M\in\mathcal{X}(T)
     \end{array}
   \right.
\end{displaymath} }
for every {\upshape $M\in\text{ind}\,C$}. 
\end{cor}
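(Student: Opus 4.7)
The strategy is to dispatch each case using results already established, noting that by Proposition \ref{2.2}(e) the torsion pair $(\mathcal{X}(T),\mathcal{Y}(T))$ is splitting, so every $M\in\text{ind}\,C$ falls into exactly one of the two cases.

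For the case $M\in\mathcal{Y}(T)$, the plan is to apply Theorem \ref{9.1} to the module $N:=M\otimes_C T$. By the Brenner--Butler Theorem \ref{2.1}(a), the functor $-\otimes_C T$ restricts to a quasi-inverse of $\text{Hom}_A(T,-):\mathcal{T}(T)\to\mathcal{Y}(T)$, so $N\in\mathcal{T}(T)$ and $\text{Hom}_A(T,N)\cong M$. Now Theorem \ref{9.1} applies to $N$ and yields
\[
\text{Hom}_A(T,N)\otimes_C B \;\cong\; \text{Hom}_{\mathcal{C}_A}(T,N),
\]
and substituting $\text{Hom}_A(T,N)\cong M$ on the left and $N=M\otimes_C T$ on the right gives the claimed isomorphism $M\otimes_C B\cong \text{Hom}_{\mathcal{C}_A}(T,M\otimes_C T)$.

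For the case $M\in\mathcal{X}(T)$, the plan is to combine Proposition \ref{2.2}(c) with Proposition \ref{4.3}(a). The former gives $\text{id}_C M\leq 1$ for every $M\in\mathcal{X}(T)$, and the latter then yields $M\otimes_C B\cong M$ directly.

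Neither case presents a real obstacle since both rely on invoking results in a straightforward way; the only point that requires a little care is recognizing that indecomposability of $M$, together with the splitting of the torsion pair, is what makes the two cases exhaustive and mutually exclusive, so that the piecewise formula is well-defined.
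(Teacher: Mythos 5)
Your proof is correct and follows essentially the same route as the paper: it uses Proposition \ref{2.2}(e) for the splitting, Proposition \ref{2.2}(c) together with Proposition \ref{4.3}(a) for the $\mathcal{X}(T)$ case, and Theorem \ref{2.1}(a) together with Theorem \ref{9.1} for the $\mathcal{Y}(T)$ case.
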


\begin{proof}
First, note that every $M\in\text{ind}\,C$ belongs to either $\mathcal{X}(T)$ or $\mathcal{Y}(T)$ by Proposition~\ref{2.2}(e).  If  $M\in\mathcal{X}(T)$, Proposition \ref{2.2}(c) implies that id$_C M \leq 1$, and then Proposition \ref{4.3}(a) yields $M\otimes _C B \cong M$.  If $M\in\mathcal{Y}(T)$, then Theorem \ref{2.1}(a) implies that $M\cong\text{Hom}_A(T,N)$ for $N=M\otimes_C T\in \mathcal{T}(T)$.  Hence 
$M\otimes_C B \cong \text{Hom}_{\mathcal{C}_A}(T,N)$, by Theorem \ref{9.1}.
\end{proof}

Similarly, there is a dual statement that provides an alternative description of the coinduction functor. 

\begin{thm}\label{9.2}
Let $A$ be a hereditary algebra and {\upshape $T\in \text{mod}\, A$} a basic tilting module.  Let $\mathcal{C}_A$ be the cluster category of $A$,  {\upshape $C=\text{End}_A T$} be the corresponding tilted algebra, and $B=C\ltimes E$ the corresponding cluster-tilted algebra.  Then the following diagram commutes.  
{\upshape $$\xymatrix@C=3cm{\mathcal{F}(T)\ar[r]^{\text{Ext}^1_A(T,-)}\ar[d]&\mathcal{X}(T)\ar[d]^{D(B\otimes _C D-)}\\ \mathcal{C}_A\ar[r]^{\text{Ext}^1_{\mathcal{C}_A}(T,-)}&\text{mod}\,B}$$}That is, {\upshape $D(B\otimes_C D \text{Ext}_A^1(T,M))\cong\text{Ext}_{\mathcal{C}_A}^1(T,M)$}, for every $M\in\mathcal{F}(T)$.  
\end{thm}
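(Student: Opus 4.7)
The plan is to mirror the proof of Theorem \ref{9.1} by systematically dualizing each ingredient: projective resolutions in $\textup{mod}\,C$ become injective copresentations; the Brenner--Butler equivalence $\mathcal{T}(T) \simeq \mathcal{Y}(T)$ (Theorem \ref{2.1}(a)) is replaced by its counterpart $\mathcal{F}(T) \simeq \mathcal{X}(T)$ (Theorem \ref{2.1}(b)); the right exact induction functor $-\otimes_C B$ becomes the left exact coinduction functor $D(B \otimes_C D-)$; and $\textup{Hom}_{\mathcal{C}_A}(T,-)$ is replaced throughout by $\textup{Ext}^1_{\mathcal{C}_A}(T,-)$. A key new input is that the Buan--Marsh--Reiten functor $\textup{Hom}_{\mathcal{C}_A}(T,-)$ vanishes on $\textup{add}(\tau_{\mathcal{C}_A}T)$ (Theorem \ref{2.5}), which plays the role that the vanishing of $\textup{Ext}^1_{\mathcal{C}_A}(T,-)$ on $\textup{add}\,T$ played before. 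By additivity it suffices to treat indecomposable $M\in\mathcal{F}(T)$.

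I would first handle the base case $M = \tau_A T_j$ for some non-projective indecomposable summand $T_j$ of $T$. By Proposition \ref{2.4}, $\textup{Ext}^1_A(T, M)$ is the indecomposable injective $C$-module $I_C(j)$, so by Proposition \ref{3.4}(b) the left-hand side of the desired isomorphism is the indecomposable injective $B$-module $I_B(j)$. On the other hand, the 2-Calabi--Yau property of $\mathcal{C}_A$ together with the identification $[1]=\tau_{\mathcal{C}_A}$ gives $\textup{Ext}^1_{\mathcal{C}_A}(T,\tau_A T_j) \cong D\textup{Ext}^1_{\mathcal{C}_A}(\tau_A T_j,T)\cong D\textup{Hom}_{\mathcal{C}_A}(T_j, T) \cong D(Be_j) \cong I_B(j)$, matching the left-hand side.

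For the general case, assume $M\notin\textup{add}(\tau_A T)$ and set $X = \textup{Ext}^1_A(T, M) \in \mathcal{X}(T)$. By Proposition \ref{2.2}(c), $\textup{id}_C X \leq 1$, so $X$ admits a minimal injective copresentation $0 \to X \to I^0 \to I^1 \to 0$ in $\textup{mod}\,C$. Since $\mathcal{X}(T)$ is closed under successors (Proposition \ref{2.2}(f)), we have $I^0, I^1 \in \mathcal{X}(T)$, and Proposition \ref{2.4} together with Theorem \ref{2.1}(b) lets us write $I^j = \textup{Ext}^1_A(T, N^j)$ for some $N^j \in \textup{add}(\tau_A T)$. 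Applying the quasi-inverse $\textup{Tor}^C_1(-, T)$ and using that $-\otimes_C T$ vanishes on $\mathcal{X}(T)$, the long exact Tor sequence collapses to a short exact sequence $0 \to M \to N^0 \to N^1 \to 0$ in $\textup{mod}\,A$, which induces a triangle $M \to N^0 \to N^1 \to M[1]$ in $\mathcal{C}_A$.

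Applying $\textup{Hom}_{\mathcal{C}_A}(T, -)$ to this triangle and using $\textup{Hom}_{\mathcal{C}_A}(T, N^j) = 0$ (since $N^j \in \textup{add}(\tau_{\mathcal{C}_A}T)$), the resulting long exact sequence begins
\[0 \to \textup{Ext}^1_{\mathcal{C}_A}(T, M) \to \textup{Ext}^1_{\mathcal{C}_A}(T, N^0) \to \textup{Ext}^1_{\mathcal{C}_A}(T, N^1),\]
while applying the left exact coinduction functor to the injective copresentation of $X$ yields
\[0 \to D(B\otimes_C DX) \to D(B\otimes_C DI^0) \to D(B\otimes_C DI^1).\]
The base case, applied summand by summand, identifies the injective $B$-modules $D(B\otimes_C DI^j) \cong \textup{Ext}^1_{\mathcal{C}_A}(T, N^j)$. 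The main obstacle, dual to the \emph{top-matching} argument in the proof of Theorem \ref{9.1}, is to verify that the horizontal maps in the two sequences agree under these identifications. This reduces to a \emph{socle-matching} argument: any morphism between injective $B$-modules is determined by its restriction to the semisimple socle via the natural isomorphism $\textup{Hom}_B(I^0_B, I^1_B) \cong \textup{Hom}_B(\textup{soc}\,I^0_B, \textup{soc}\,I^1_B)$, and in both sequences the relevant map is induced by the same underlying morphism $I^0 \to I^1$ in $\textup{mod}\,C$ (equivalently $N^0 \to N^1$ in $\textup{mod}\,A$ via Brenner--Butler), so the socle restrictions coincide. A Five Lemma argument then identifies the kernels and produces the desired isomorphism $\textup{Ext}^1_{\mathcal{C}_A}(T,M)\cong D(B\otimes_C D\textup{Ext}^1_A(T,M))$.
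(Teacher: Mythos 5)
Your proposal is correct and follows essentially the same route as the paper's proof: the same base case for injective $C$-modules in $\mathcal{X}(T)$ (Serre duality/2-Calabi--Yau), the same passage via $\textup{Tor}^C_1(-,T)$ to a short exact sequence in $\textup{mod}\,A$ and then a triangle in $\mathcal{C}_A$, the same vanishing of $\textup{Hom}_{\mathcal{C}_A}(T,-)$ on $\textup{add}(\tau_{\mathcal{C}_A}T)$ (the paper phrases this as $\textup{Ext}^1_{\mathcal{C}_A}(T,\tau T^1[-1])=0$), and the same socle-matching plus Five Lemma conclusion. The only detail you elide that the paper spells out is that one needs $\textup{Tor}^C_2(I^1_C,T)=0$, which holds because $T$ has projective dimension at most one as a left $C$-module, to get injectivity on the left of the induced sequence in $\textup{mod}\,A$.
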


\begin{proof}
Let $M$ be an indecomposable module belonging to $\mathcal{F}(T)$.  Consider $\text{Ext}_A^1(T,M)$, which belongs to $\mathcal{X}(T)$ by Theorem \ref{2.1}(b). If $\text{Ext}_A^1(T,M)=I^i_C$ is an  injective $C$-module, then $D(B\otimes_C DI^i_C)\cong I^i_B$ is the corresponding injective in mod$\,B$.  On the other hand, by Proposition \ref{2.4} we have $M\cong \tau T^i$ for some $T^i\in\text{add}\,T$.  Therefore,  by Serre Duality  
\begin{equation}
\label{14} \text{Ext}_{\mathcal{C}_A}^1(T, \tau T^i)\cong D\text{Hom}_{\mathcal{C}_A}(T^i,T)\cong I^i_B.
\end{equation}
  This shows that the theorem holds if  $\text{Ext}_A^1(T,M)$ is injective.  

If $\text{Ext}_A^1(T,M)\in\mathcal{X}(T)$ is not injective then according to Proposition \ref{2.2}(c) it has injective dimension one.  Consider a minimal injective resolution of this module in mod$\,C$ below. 
\begin{equation} \label{13} \xymatrix{0\ar[r]&\text{Ext}_A^1(T,M)\ar[r]&I^0_C\ar[r]^f&I^1_C\ar[r]&0}\end{equation}
Because $I^0_C, I^1_C$ are successors of $\text{Ext}_A^1(T,M)$, Proposition \ref{2.2}(f) implies that these injectives belong to $\mathcal{X}(T)$.  By Proposition \ref{2.4} and Theorem \ref{2.1}(b), there exist $T^1,T^0\in\text{add}\,T$, and $g\in \text{Ext}_A^1(\tau T^0,\tau T^1)$ such that 
$$\xymatrix{f=\text{Ext}_A^1(T,g),&I^1_C=\text{Ext}_A^1(T,\tau T^1),&I^0_C=\text{Ext}^1_A(T,\tau T^0).}$$
Applying $\text{Tor}^C_1(-,T)$ to sequence (\ref{13}) and using Theorem \ref{2.1}, we obtain a long exact sequence 
$$\xymatrix{\text{Tor}^C_2(I^1_C,T)\ar[r]&M\ar[r]&\tau T^0\ar[r]^g&\tau T^1\ar[r] & \text{Ext}_A^1(T,M)\otimes_C T.}$$
Observe that by the same theorem the last term in the sequence above is zero, likewise $\text{Tor}^C_2(I^1_C,T)=0$, because $T$ as a left $C$-module has projective dimension at most one.  Thus we obtain a short exact sequence in mod$\,A$ 
$$\xymatrix{0\ar[r]&M\ar[r]&\tau T^0\ar[r]^g&\tau T^1 \ar[r]&0}$$
which induces a triangle 
$$\xymatrix{\tau T^1[-1]\ar[r]&M\ar[r]&\tau T^0 \ar[r]^g&\tau T^1}$$
in the cluster category.
Applying $\text{Ext}^1_{\mathcal{C}_A}(T,-)$ to this triangle we obtain an exact sequence in mod$\,B$
$$\xymatrix{\text{Ext}^1_{\mathcal{C}_A}(T,\tau T^1[-1])\ar[r]&\text{Ext}^1_{\mathcal{C}_A}(T,M)\ar[r]&\text{Ext}^1_{\mathcal{C}_A}(T,\tau T^0)\ar[r]^{g_*}&\text{Ext}^1_{\mathcal{C}_A}(T,\tau T^1)}.$$
Observe that because $T$ is a tilting object in $\mathcal{C}_A$, we have $\text{Ext}^1_{\mathcal{C}_A}(T,\tau T^1[-1])=0$.  Moreover, equation (\ref{14}) yields $\text{Ext}^1_{\mathcal{C}_A}(T,\tau T^0)=I^0_B$ and $\text{Ext}^1_{\mathcal{C}_A}(T,\tau T^1)=I^1_B$.  Thus,
$$\text{Ext}^1_{\mathcal{C}_A}(T,M) = \text{ker}\, \text{Ext}^1_{\mathcal{C}_A}(T,g): I^0_B\rightarrow I^1_B.$$
On the other hand, applying the coinduction functor to sequence (\ref{13}) we obtain 
$$\xymatrix@C=2cm{0\ar[r]&D(B\otimes _C D \text{Ext}^1_A(T,M))\ar[r]&I^0_B\ar[r]^{D(1\otimes D g)}&I^1_B.}$$
Hence, we see that 
$$D(B\otimes _C D \text{Ext}^1_A(T,M))=\text{ker} \,D(1\otimes Dg) : I^0_B\rightarrow I^1_B.$$
Because $I^0_B$ is injective, both maps $\text{Ext}^1_{\mathcal{C}_A}(T,g)$ and $D(1\otimes Dg)$ are determined by their preimages of $\text{soc}\,I^1_B$.  But since $\text{soc}\,I^1_B=\text{soc}\,I^1_C$, we have 
$$\text{Ext}^1_{\mathcal{C}_A}(T,g)^{-1}(\text{soc}\, I^1_B)=\text{Ext}^1_A(T,g)^{-1}(\text{soc}\,I^1_C)=D(B\otimes _C D\text{Ext}^1_A(T,g))^{-1}(\text{soc}\, I^1_B).$$
Thus we have a commutative diagram 
$$\xymatrix@C=3cm{0\ar[r]&\text{Ext}^1_{\mathcal{C}_A}(T,M)\ar[r]\ar[d]^h&I^0_B\ar[d]^{\cong}\ar[r]^{\text{Ext}^1_{\mathcal{C}_A}(T,g)}&I^1_B\ar@{=}[d]\\
0\ar[r]&D(B\otimes_C D\text{Ext}^1_A(T,M))\ar[r]&I^0_B\ar[r]^{D(B\otimes_C D\text{Ext}^1_A(T,g))}&I^1_B}$$
and $h$ is an isomorphism by the Five Lemma. This shows that for every $M\in\mathcal{F}(T)$ we have $\text{Ext}^1_{\mathcal{C}_A}(T,M)\cong D(B\otimes_C D\text{Ext}_A^1(T,M))$, and finishes the proof of the theorem.  
\end{proof}

\begin{cor}
Let $A$ be a hereditary algebra and {\upshape $T\in \text{mod}\, A$} a basic tilting module.  Let $\mathcal{C}_A$ be the cluster category of $A$,  {\upshape $C=\text{End}_A T$} be the corresponding tilted algebra, and $B=C\ltimes E$  the corresponding cluster-tilted algebra. Then
{\upshape \begin{displaymath}
   D(B\otimes_C DM) \cong \left\{
     \begin{array}{lrr}
       \text{Ext}^1_{\mathcal{C}_A}(T,\text{Tor}^C_1(M,T)) & \text{if}&M\in\mathcal{X}(T)\\
       M & \text{if} & M\in\mathcal{Y}(T)
     \end{array}
   \right.
\end{displaymath} }
for every {\upshape $M\in \text{ind}\,C$}.  
\end{cor}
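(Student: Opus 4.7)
The plan is to mirror exactly the proof of Corollary \ref{9.5}, using Theorem \ref{9.2} in place of Theorem \ref{9.1}, and the torsion pair $(\mathcal{X}(T), \mathcal{Y}(T))$ together with the two halves of the Brenner--Butler Tilting Theorem \ref{2.1}.

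First I would invoke Proposition \ref{2.2}(e), which states that $(\mathcal{X}(T),\mathcal{Y}(T))$ is splitting, so any $M\in\textup{ind}\,C$ lies in either $\mathcal{X}(T)$ or $\mathcal{Y}(T)$, and the two cases can be treated separately.

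For the easier case $M\in\mathcal{Y}(T)$, Proposition \ref{2.2}(d) gives $\textup{pd}_C M\le 1$, and then Proposition \ref{4.3}(b) yields $D(B\otimes_C DM)\cong M$ directly. For the case $M\in\mathcal{X}(T)$, I would apply Theorem \ref{2.1}(b): the functors $\textup{Ext}^1_A(T,-)$ and $\textup{Tor}^C_1(-,T)$ are quasi-inverse equivalences between $\mathcal{F}(T)$ and $\mathcal{X}(T)$, so setting $N=\textup{Tor}^C_1(M,T)\in\mathcal{F}(T)$ we have $M\cong \textup{Ext}^1_A(T,N)$. Substituting this into $D(B\otimes_C DM)$ and applying Theorem \ref{9.2} to $N\in\mathcal{F}(T)$ gives
\[
D(B\otimes_C DM)\cong D(B\otimes_C D\,\textup{Ext}^1_A(T,N))\cong \textup{Ext}^1_{\mathcal{C}_A}(T,N)=\textup{Ext}^1_{\mathcal{C}_A}(T,\textup{Tor}^C_1(M,T)),
\]
as required.

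There is no real obstacle here, since all the machinery has been built: the only subtle point is the identification, via the Brenner--Butler equivalence, of an arbitrary $M\in\mathcal{X}(T)$ with $\textup{Ext}^1_A(T,\textup{Tor}^C_1(M,T))$, which is necessary in order to put $M$ into the form to which Theorem \ref{9.2} applies. Once this identification is made, the result is immediate.
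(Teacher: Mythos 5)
Your proposal is correct and is exactly the argument the paper intends: the paper's proof of this corollary says only that it "is similar to that of Corollary \ref{9.5} and we omit it," and your proof is precisely the dual of Corollary \ref{9.5}, using Proposition \ref{2.2}(e) to split the cases, Proposition \ref{2.2}(d) and Proposition \ref{4.3}(b) for $M\in\mathcal{Y}(T)$, and Theorem \ref{2.1}(b) together with Theorem \ref{9.2} for $M\in\mathcal{X}(T)$.
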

\begin{proof}
The proof is similar to that of Corollary 6.2 and we omit it. 
\end{proof}

\begin{exmp}
Let $A$ be the path algebra of  type $\mathbb{D}_6$ given by the following quiver. 
$$\xymatrix@R=.2cm{&&&&5\ar[dl]\\1&2\ar[l]&3\ar[l]&4\ar[l]\ar[dr]\\&&&&6}$$
Let 
$$T={\begin{smallmatrix}1\end{smallmatrix}}\oplus{\begin{smallmatrix}5\\4\\6\;3\\\;\;\;2\\\;\;\;1\end{smallmatrix}}\oplus
{\begin{smallmatrix}5\\4\\3\\2\\1\end{smallmatrix}}
\oplus{\begin{smallmatrix}5\\4\\6\;3\end{smallmatrix}}\oplus
{\begin{smallmatrix}5\\4\\6\end{smallmatrix}}\oplus{\begin{smallmatrix}5\end{smallmatrix}}$$
be the tilting $A$-module.  The Auslander-Reiten quiver of mod$\,A$ is shown at the top of Figure~\ref{figure 1}. The modules belonging to $\mathcal{T}(T)$ are in the dark shaded regions enclosed by a solid line, while modules belonging to $\mathcal{F}(T)$ are in the light shaded regions enclosed by a dotted line.

\begin{figure}
\begin{center}
{ \scalebox{0.9}{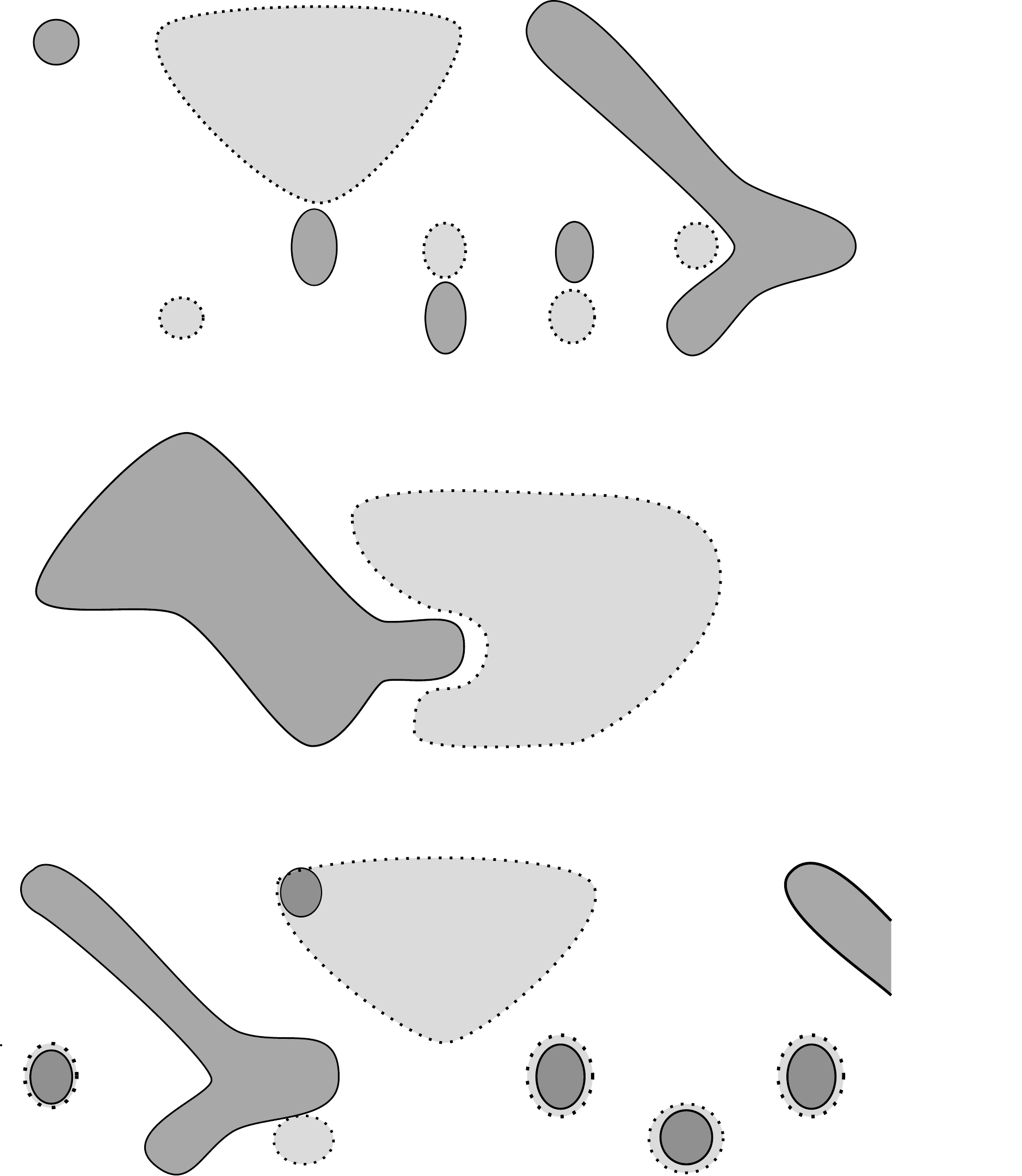}}
\caption{Auslander-Reiten quivers of $\textup{mod}\,A$ (top), $\textup{mod}\,C$ (middle) and $\textup{mod}\,B$ (bottom). }
\label{figure 1}
\end{center}
\end{figure}

Then the corresponding tilted algebra $C=\text{End}_A T$ is  of type $\mathbb{D}_6$, and it is given by the following quiver with relations.
$$\xymatrix@R=10pt{1&2\ar[l]_{\beta}&4\ar[l]_{\alpha}&5\ar[l]_{\gamma}&6\ar[l]_{\delta}\ar[dl]^{\epsilon}&&\alpha\beta=0\\&&&3\ar[ull]^{\sigma}&&&\delta\gamma\alpha=\epsilon\sigma.}$$
\end{exmp}

The Auslander-Reiten quiver of mod$\,C$ is shown in the middle of Figure \ref{figure 1}.  The modules belonging to $\mathcal{Y}(T)$ are in the dark shaded regions  enclosed by a solid line, while the modules belonging to $\mathcal{X}(T)$ are in the light shaded regions enclosed by a dotted line.

The cluster-tilted algebra $B=C\ltimes E$ is represented by the following quiver with relations. 

$$\xymatrix{1\ar@/^15pt/[rr]^{\omega}&2\ar[l]^{\beta}\ar@/^-15pt/[rrr]_{\;\;\;\rho}&4\ar[l]_{\alpha}&5\ar[l]_{\gamma}&6\ar[l]_{\delta}\ar[dl]^{\epsilon}&&{\begin{matrix}\alpha\beta=\omega\alpha=0& \gamma\alpha\rho=\alpha\rho\delta=0&\sigma\rho=\rho\epsilon=0\end{matrix}}\\&&&3\ar[ull]^{\sigma}&&&{\begin{matrix}\beta\omega=\rho\delta\gamma& \delta\gamma\alpha=\epsilon\sigma.\end{matrix}}}$$

The Auslander-Reiten quiver of mod$\,B$ is shown at the bottom of Figure \ref{figure 1}, where we identify the modules which have the same labels.  The modules  in the dark shaded regions enclosed by a solid line correspond to the set $\{Y\otimes_C B\mid Y\in\mathcal{Y}(T)\}$, while those in the light shaded regions enclosed by a dotted line correspond to the set $\{D(B\otimes_C DX)\mid X\in\mathcal{X}(T)\}$.  Note that there are four modules that lie in both sets. As stated in Theorems \ref{9.1} and \ref{9.2} the shape and relative position of $\mathcal{T}(T)$ and $\mathcal{F}(T)$ in mod$\,A$ correspond exactly to the set of induced $\mathcal{Y}(T)$-modules and the set of coinduced $\mathcal{X}(T)$-modules respectively. Moreover, if we apply $\tau^{-1}_B$ to the set of modules $\{D(B\otimes_C DX)\mid X\in\mathcal{X}(T)\}$, then their position relative to the induced $\mathcal{Y}(T)$ modules will correspond exactly to the position of $\mathcal{F}(T)$ relative to $\mathcal{T}(T)$ in mod$\,A$.

\section{Which Modules are Induced or Coinduced} \label{sect 7}

If we want to study the module category of a cluster-tilted algebra via the induction and coinduction functors,  then it is natural to ask   which modules in a cluster-tilted algebra are actually induced or coinduced from the modules over some tilted algebra. Recall from Theorem~\ref{2.8} that, given a cluster-tilted algebra $B$, every local slice $\Sigma$ gives rise to a tilted algebra $C=B/\text{Ann}\,\Sigma$, whose relation extension is $B$. 
\begin{defn}
 Let $B$ be a cluster-tilted algebra and $M$ a $B$-module.
\begin{enumerate}
\item $M$ is said to be \emph{induced from some tilted algebra} if there exists a tilted algebra $C$ and a $C$-module $X$ such that $B$ is the relation extension of $C$ and $M=X\otimes_C B$.
\item  $M$ is said to be \emph{coinduced from some tilted algebra} if there exists a tilted algebra $C$ and a $C$-module $X$ such that $B$ is the relation extension of $C$ and $M=D(B\otimes_C D X)$. 
\end{enumerate}
\end{defn}
We begin by looking at algebras of finite representation type.  

\begin{thm}\label{7.0}  If $B$ is a cluster-tilted algebra of finite representation type, then for every indecomposable $B$-module $M$ there exists a tilted algebra $C$ and a $C$-module $N$ such that $M$ is induced and coinduced from $N$.    
\end{thm}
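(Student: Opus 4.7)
The plan is straightforward: given an indecomposable $B$-module $M$, I will produce a tilted algebra $C$ (with $B$ as its relation extension) for which $M$ itself is the required preimage under both induction and coinduction. All the heavy machinery is already in place.

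First, since $B$ is of finite representation type, Theorem \ref{2.9} furnishes a local slice $\Sigma$ in $\textup{mod}\,B$ with $M \in \Sigma$. By Theorem \ref{2.8}, the quotient $C := B/\textup{Ann}_B\Sigma$ is a tilted algebra whose relation extension is exactly $B$, and $\Sigma$ embeds as a (classical) slice in $\textup{mod}\,C$. Via the canonical surjection $\pi : B \to C$, we may regard $M$ as a $C$-module $N$ lying on this slice.

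Next, I invoke a standard fact about slices in tilted algebras: every module on a slice $\Sigma \subset \textup{mod}\,C$ satisfies $\textup{pd}_C\, N \leq 1$ and $\textup{id}_C\, N \leq 1$. (The slice is a tilting $C$-module by Theorem \ref{2.7}, so $\textup{pd}_C \leq 1$ is built into the definition; the dual bound follows by applying the same argument in $\textup{mod}\,C^{op}$, or equivalently is used already in the proof of Corollary \ref{4.5}.)

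With these two dimension bounds in hand, Proposition \ref{4.3} closes the argument instantly: part (a) gives $N \otimes_C B \cong N \cong M$, showing $M$ is induced from $N$; part (b) gives $D(B \otimes_C DN) \cong N \cong M$, showing $M$ is coinduced from $N$.

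I do not anticipate a real obstacle here — the work was done in Theorems \ref{2.8}, \ref{2.9} and Proposition \ref{4.3}. The only point that deserves a sentence of justification in the write-up is the simultaneous bound $\textup{pd}_C\,N, \textup{id}_C\,N \leq 1$ for $N$ on a slice, and this is essentially the same observation that underlies Corollary \ref{4.5}.
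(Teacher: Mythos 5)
Your proof is correct and follows essentially the same route as the paper's: Theorem \ref{2.9} to place $M$ on a local slice, Theorem \ref{2.8} to produce the tilted algebra $C$ with $B$ as its relation extension, the standard fact that slice modules have projective and injective dimension at most one, and Proposition \ref{4.3} to conclude that both induction and coinduction act as the identity on $M$. The only cosmetic difference is that you spell out the justification for the dimension bounds, whereas the paper invokes this as a known fact about slices (as it also does in the proof of Corollary \ref{4.5}).
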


\begin{proof}
If $B$ is of finite representation type and $M\in\text{ind}\,B$ then by Theorem \ref{2.9} there exists a local slice $\Sigma\in\Gamma(\text{mod}\,B)$ containing $M$.  Theorem \ref{2.8} implies that there is a tilted algebra $C$ such that $\Sigma$ is a slice in $\Gamma(\text{mod}\,C)$ and $B$ is the relation extension of $C$.  Hence, $M$ is an indecomposable $C$-module lying on a slice, which means id$_C M \leq 1$ and pd$_C M\leq 1$.  It follows from Proposition \ref{4.3} that $M\cong M\otimes _C B \cong D(B\otimes _C DM)$.  Therefore, $M\in\text{ind}\,B$ is both induced and coinduced from the same module $M\in\text{ind}\,C$, for some tilted algebra $C$.  
\end{proof}

Let us make the following observation. 

\begin{prop}
  Suppose $B$ is a cluster-tilted algebra and $M$ is an indecomposable $B$-module. If $M$ is also an indecomposable $C$-module, for some tilted algebra $C$, whose relation extension is $B$, then $M$ is induced or coinduced.  
  \end{prop}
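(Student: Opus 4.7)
The hypothesis is that $M$ can be viewed simultaneously as an indecomposable $B$-module and as an indecomposable $C$-module, where $C$ is some tilted algebra with $B=C\ltimes E$. Concretely, this means $M$ is a $B$-module on which $E$ acts trivially, so that the $B$-action factors through the surjection $\pi:B\to C$; the restriction of $M$ along $\pi$ is indecomposable as a $C$-module, and viewing this $C$-module as a $B$-module via $\pi$ recovers the original $B$-module structure.

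My plan is to reduce immediately to the dichotomy supplied by Proposition \ref{2.2}(b). Viewing $M$ as an indecomposable $C$-module, either $\textup{id}_C M\le 1$ or $\textup{pd}_C M\le 1$. In the first case, Proposition \ref{4.3}(a) gives an isomorphism $M\otimes_C B\cong M$ of $B$-modules (this is a $B$-module isomorphism because it is the surjection in the short exact sequence of Proposition \ref{3.6}(a), whose kernel $M\otimes_C E$ vanishes when $\textup{id}_C M\le 1$). In particular $M$ is induced from the $C$-module $M$. In the second case, the dual argument using Proposition \ref{4.3}(b) produces $D(B\otimes_C DM)\cong M$ as $B$-modules, so $M$ is coinduced from the $C$-module $M$.

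There is no real obstacle; the statement is essentially a bookkeeping corollary of Propositions \ref{2.2} and \ref{4.3} once one recognizes that an indecomposable $B$-module which is also a $C$-module is, by definition, an indecomposable $C$-module with trivial $E$-action, so the two possible $B$-module structures (the original one, and the one coming from extension of scalars) agree. The only conceptual point worth emphasizing in the write-up is why the isomorphisms of Proposition \ref{4.3} identify $M$ with its induced or coinduced module as $B$-modules rather than merely as $C$-modules; this is immediate from the naturality of the short exact sequences in Proposition \ref{3.6}.
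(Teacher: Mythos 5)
Your proof is correct and follows the same route as the paper: invoke Proposition \ref{2.2}(b) to get $\textup{id}_C M\le 1$ or $\textup{pd}_C M\le 1$, then apply Proposition \ref{4.3}(a) or (b) accordingly. The extra remarks you add about the $E$-action being trivial and the isomorphism holding at the $B$-module level are a reasonable elaboration but do not change the argument.
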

\begin{proof}
  Observe that if $M$ is a $C$-module, then Proposition \ref{2.2}(b) implies that id$_CM\leq 1$ or pd$_C M \leq 1$.  In the first case $M\cong M\otimes_C B$ and in the second one $M\cong D(B\otimes_C  DM)$ by Proposition \ref{4.3}.  Hence, in either case $M$ is induced or coinduced from a $C$-module.
\end{proof}
   Therefore, it makes sense to ask which indecomposable modules over a cluster-tilted algebra are also indecomposable modules over a tilted algebra.  

\begin{thm}\label{7.1}
Let $B$ be a cluster-tilted algebra.  Then for every transjective indecomposable $B$-module $M$, there exists a tilted algebra $C$, such that $B$ is the relation extension of $C$, and $M$ is an indecomposable $C$-module. In particular, every transjective $B$-module is induced or coinduced from $C$.
\end{thm}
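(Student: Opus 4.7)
The plan is to reduce the theorem to Theorem \ref{2.8} by producing a local slice in $\Gamma(\textup{mod}\,B)$ that contains the given transjective module $M$. Once such a local slice $\Sigma$ is obtained, Theorem \ref{2.8} supplies a tilted algebra $C = B/\textup{Ann}_B\,\Sigma$ whose relation extension is $B$ and for which $\Sigma$ is a slice in $\textup{mod}\,C$. Since $M \in \Sigma$ is then an indecomposable $C$-module (endomorphism rings are preserved under the surjection $B \twoheadrightarrow C$), the first assertion of the theorem follows. The ``in particular'' statement is then immediate from the proposition preceding Theorem \ref{7.1}, which says that any indecomposable $B$-module which happens to also be an indecomposable module over some tilted algebra $C$ with $B = C \ltimes \textup{Ext}^2_C(DC,C)$ is automatically induced or coinduced; alternatively, one can finish by hand using that $\textup{id}_C M \leq 1$ or $\textup{pd}_C M \leq 1$ (Proposition \ref{2.2}(b) applied to a module on a slice) and then invoking Proposition \ref{4.3}.

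The core of the proof is therefore the claim that every transjective indecomposable $M \in \textup{ind}\,B$ lies on some local slice of $\Gamma(\textup{mod}\,B)$. I would prove this by a local slice mutation argument. By definition, the transjective component $\Gamma_{\textup{tr}}$ contains at least one local slice $\Sigma_0$. Starting from $\Sigma_0$, one iteratively replaces a source $X$ of $\Sigma_0$ (an indecomposable in $\Sigma_0$ with no arrow in $\Sigma_0$ ending at $X$) by its inverse translate $\tau_B^{-1} X$, or dually a sink $Y$ of $\Sigma_0$ by $\tau_B Y$. Checking the four axioms in the definition of a local slice in Section \ref{sect 2.4}, each such operation yields a new local slice still contained in $\Gamma_{\textup{tr}}$. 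Since $\Gamma_{\textup{tr}}$ has the structure of a translation quiver of the form $\mathbb{Z}Q'/G$ for some quiver $Q'$ and an admissible (possibly trivial) group $G$ of automorphisms, and in such a quiver any two complete sections are connected by a finite chain of source and sink mutations, finitely many such mutations transform $\Sigma_0$ into a local slice containing $M$.

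The main obstacle is this mutation step: one must verify carefully that a single source/sink replacement preserves the local slice axioms inside $\Gamma_{\textup{tr}}$, and that one may choose the sequence of mutations so that the mutated slice eventually reaches any prescribed transjective vertex. In the representation-finite case the conclusion is already available through Theorem \ref{2.9}, so the substantive content lies in the representation-infinite case, where one exploits the known description of the transjective component of a cluster-tilted algebra (in particular the results of \cite{ABS2} on the shape of $\Gamma_{\textup{tr}}$ and its local slices) to guarantee that the mutation walk can be steered to land on $M$.
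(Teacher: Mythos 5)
Your overall strategy---reduce the theorem to producing a local slice through $M$ and then invoke Theorem \ref{2.8}, Proposition \ref{2.2}(b) and Proposition \ref{4.3}---is the right starting point, and the reduction of the ``in particular'' clause is correct. The core step, however, has a real gap, and the paper handles it differently in a way that is not just cosmetic.

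First, the structural claim you rely on is not available. For a representation-infinite cluster-tilted algebra $B=\textup{End}_{\mathcal{C}_A}T$, the transjective component of $\Gamma(\textup{mod}\,B)$ is the image under $\textup{Hom}_{\mathcal{C}_A}(T,-)$ of the transjective component $\mathbb{Z}\Delta$ of $\mathcal{C}_A$ with the vertices $\textup{add}(\tau_{\mathcal{C}_A}T)$ deleted. It therefore contains genuine projectives and injectives and has ``holes'' at the deleted vertices, so it is \emph{not} a stable translation quiver of the form $\mathbb{Z}Q'/G$, and the classical fact that any two complete sections of a stable translation quiver are linked by source/sink mutations simply does not apply. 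A source of a local slice can be injective, or its $\tau_B^{-1}$-translate can fail to exist (it would land on a deleted vertex), and then the mutation step you propose is undefined. You flag this as ``the main obstacle'' but do not resolve it; as stated, the argument does not go through.

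Second, and more importantly, the statement you are trying to prove by mutation---that every transjective indecomposable $M$ itself lies on a local slice of $\Gamma(\textup{mod}\,B)$---is not what the paper establishes and may well be too strong in the representation-infinite case. The paper proves only that \emph{$M$ or $\tau_B^{-1}M$} lies on a local slice, and then uses Corollary \ref{4.5} (which says $\tau_B$ and $\tau_C$ agree on slice modules) to recover $M=\tau_C(\tau_B^{-1}M)$ as an indecomposable $C$-module even when the local slice only passes through $\tau_B^{-1}M$. Your reduction step, which insists on a local slice through $M$ itself, should be weakened accordingly.

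Concretely, the paper sidesteps both problems by lifting to the cluster category, where the transjective component really is $\mathbb{Z}\Delta$: it takes $\widetilde M\in\mathcal{C}_A$ with $\textup{Hom}_{\mathcal{C}_A}(T,\widetilde M)=M$, uses $B\cong\textup{End}_{\mathcal{C}_A}(\tau_{\mathcal{C}_A}^{\ell}T)$ to place $\widetilde\Sigma=\Sigma(\rightarrow\widetilde M)$ deep in the preprojective component and arrange sincerity, applies Ringel's criterion (Proposition \ref{2.6}) to get a slice, and then uses Lemma \ref{lemls} to decide whether $\textup{Hom}_{\mathcal{C}_A}(T,\widetilde\Sigma)$ is a local slice in $\textup{mod}\,B$. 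When it is not (i.e.\ $\widetilde\Sigma$ meets $\textup{add}(\tau_{\mathcal{C}_A}T)$), the proof passes to $\Sigma(T_i\rightarrow)$, which contains $\tau_{\mathcal{C}_A}^{-1}\widetilde M$ and provably avoids $\textup{add}(\tau_{\mathcal{C}_A}T)$, yielding a local slice through $\tau_B^{-1}M$. To repair your proposal you would either need to carry out this lift to $\mathcal{C}_A$ (essentially reproducing the paper's argument), or give a genuinely new argument that the mutation walk in $\Gamma(\textup{mod}\,B)$ never gets blocked---which, given the deleted vertices and the presence of projectives and injectives, would require substantial additional work and might require settling for $\tau_B^{-1}M$ anyway.
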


\begin{proof} We will show that if $M$ is a transjective indecomposable $B$-module then $M$ or $\tau^{-1}_BM$ lies in a local slice in $\textup{mod}\,B$. According to Corollary \ref{4.5} this will prove the theorem.

Let $A$ be a hereditary algebra and $T\in\mathcal{C}_A$ a cluster-tilting object such that $B=\text{End}_{\mathcal{C}_A} T$. Let $M$ be an indecomposable $B$-module lying in the transjective component $\mathcal{T}$ of $\Gamma(\text{mod}\,B)$, and let $\widetilde M\in \mathcal{C}_A$ be an indecomposable object  such that $\text{Hom}_{\mathcal{C}_A}(T,\widetilde M)=M$.
Finally, let $\widetilde \Sigma=\Sigma (\rightarrow M) $ be the full subquiver of the Auslander-Reiten of $\mathcal{C}_A$ defined in section~\ref{sect 2.4}.

Since $B\cong \text{End}_{\mathcal{C}_A} (\tau_{\mathcal{C}_A}^\ell T)$, for all $\ell \in \mathbb{Z}$, we may assume without loss of generality that $\widetilde\Sigma$
lies in the preprojective component of $\textup{mod}\,A$.
Furthermore, we may assume that every preprojective successor of $\widetilde\Sigma$ in $\textup{mod}\,A$ is sincere. Indeed, this follows from the fact that there are only finitely many (isoclasses of) indecomposable preprojective $A$-modules that are not sincere. For tame algebras this holds, because non-sincere modules are supported on a Dynkin quiver, and for wild algebras see \cite{Ke}.

Now since $\widetilde M$ is a sincere $A$-module, Proposition \ref{2.6} implies that $\widetilde \Sigma$ is a slice in $\textup{mod}\,A$, and therefore $\widetilde \Sigma $ is a local slice in $\mathcal{C}_A$.
Let $\Sigma=\text{Hom}_{\mathcal{C}_A}(T,\widetilde\Sigma)$. Then $M\in \Sigma$, and
thus, if $\Sigma $ is a local slice in $\textup{mod}\,B$, we are done.

Suppose to the contrary that $\Sigma $ is not a local slice. Then Lemma \ref{lemls} implies that $\widetilde\Sigma$ contains an indecomposable summand $\tau_{\mathcal{C}_A} T_i$ of $\tau_{\mathcal{C}_A} T$.
Let $\widetilde \Sigma '=\Sigma(T_i\rightarrow) $ in $\mathcal{C}_A$. Then $\widetilde \Sigma '$ contains $\tau^{-1}_{\mathcal{C}_A}\widetilde{M}$.
Since $T_i$ is sincere, it again follows from Proposition \ref{2.6} that $\widetilde \Sigma '$ is a local slice in $\mathcal{C}_A$.

Moreover $\widetilde \Sigma '$ cannot contain any summands of $\tau_{\mathcal{C}_A} T$, because if it did, there would be a sectional path from $T_i$ to a summand of $\tau_{\mathcal{C}_A}T$, hence $\text{Hom}_{\mathcal{C}_A}(T_i,\tau_{\mathcal{C}_A}T)\ne 0$, which is impossible, since $T$ is a cluster-tilting object.
Therefore Lemma \ref{lemls} implies that $\text{Hom}_{\mathcal{C}_A}(T,\widetilde \Sigma')$ is a local slice in $\textup{mod}\,B$ containing $\tau_B^{-1}M=\text{Hom}_{\mathcal{C}_A}(T,\tau_{\mathcal{C}_A}^{-1} \widetilde M)$.
\end{proof}

Following \cite{R2}, we say that a cluster-tilted algebra $B$ is {\em cluster-concealed} if $B=\text{End}_{\mathcal{C}_A} (T) $ where $T$ is obtained from a preprojective $A$-module.  This means that all projective $B$-modules lie in the transjective component of $\Gamma(\text{mod}\,B)$.  In this case, we show that the theorem above holds not only for the transjective modules but for all $B$-modules. 

\begin{thm}\label{7.2}
Let $B$ be a cluster-concealed algebra.  Then for every indecomposable $B$-module $M$ there exists a tilted algebra $C$ whose relation extension is $B$, such that $M$ is an indecomposable $C$-module.  Moreover, for all non-transjective modules one can take the same $C$.  In particular, every  $B$-module is induced or coinduced from some tilted algebra.

\end{thm}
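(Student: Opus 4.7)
The plan is to split into the transjective and non-transjective cases. For a transjective indecomposable $B$-module, the statement is already contained in Theorem \ref{7.1}: it produces a tilted algebra $C$ with $B=C\ltimes E$ and shows that $M$ is an indecomposable $C$-module, whence $M$ is induced or coinduced by Propositions \ref{2.2}(b) and \ref{4.3}. So the real content is the non-transjective case together with the claim that a single tilted algebra $C$ can be used for all non-transjective $M$ at once.

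Because $B$ is cluster-concealed, I write $B=\textup{End}_{\mathcal{C}_A}(T)$ with $T$ a preprojective tilting object of $\mathcal{C}_A$ and put $C=\textup{End}_A T$, so that $B=C\ltimes E$ by the ABS theorem. I first want to identify the non-transjective indecomposable $B$-modules under the BMR equivalence $\textup{Hom}_{\mathcal{C}_A}(T,-):\mathcal{C}_A/\textup{add}(\tau T)\to\textup{mod}\,B$. Since $T$ is preprojective, so are its AR-translates in $\mathcal{C}_A$, and hence all projective $B$-modules $\textup{Hom}_{\mathcal{C}_A}(T,T_i)$ and all injective $B$-modules (which, via the BMR picture, correspond to $\tau_{\mathcal{C}_A}^{2}T_i$) lie in the image of the transjective component of $\mathcal{C}_A$, i.e.\ the transjective component of $\Gamma(\textup{mod}\,B)$. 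Combined with the fact that BMR matches connected components, this forces the non-transjective $B$-components to be precisely the images of the regular components of $\textup{mod}\,A$. So every non-transjective indecomposable $B$-module has the form $M=\textup{Hom}_{\mathcal{C}_A}(T,\widetilde M)$ for some regular $A$-module $\widetilde M$.

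For such $\widetilde M$, the standard fact that $\textup{Hom}_A(\widetilde M,\tau T)=0$ (there are no morphisms from a regular module to a preprojective one) combined with the Auslander-Reiten formula gives $\textup{Ext}^1_A(T,\widetilde M)=0$, so $\widetilde M\in\mathcal{T}(T)$. Theorem \ref{9.1} then yields
\[
M \;=\; \textup{Hom}_{\mathcal{C}_A}(T,\widetilde M) \;\cong\; \textup{Hom}_A(T,\widetilde M)\otimes_C B.
\]
Setting $N=\textup{Hom}_A(T,\widetilde M)$, Theorem \ref{2.3}(b)--(c) says that $N$ is an indecomposable regular $C$-module with $\textup{pd}_C N,\,\textup{id}_C N\leq 1$, so Proposition \ref{4.3} gives $N\otimes_C B\cong N\cong D(B\otimes_C DN)$. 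Therefore $M\cong N$ as $B$-modules; in particular $M$ is indecomposable as a $C$-module and is simultaneously induced and coinduced from $N$ over the same tilted algebra $C$.

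The only mildly delicate step is the identification of the non-transjective components of $\Gamma(\textup{mod}\,B)$ with regular $A$-components under BMR in the cluster-concealed setting; once that is in place, the conclusion is a direct synthesis of Theorems \ref{2.3} and \ref{9.1} with Proposition \ref{4.3}, and the transjective case is handed off to Theorem \ref{7.1}.
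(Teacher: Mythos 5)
Your proof is correct and follows essentially the same route as the paper's: reduce to the non-transjective case via Theorem \ref{7.1}, identify non-transjective $B$-modules with images $\textup{Hom}_{\mathcal{C}_A}(T,\widetilde M)$ of regular $A$-modules $\widetilde M$, observe $\widetilde M\in\mathcal{T}(T)$, apply Theorem \ref{9.1} to write $M\cong\textup{Hom}_A(T,\widetilde M)\otimes_C B$, and then use Theorem \ref{2.3}(c) with Proposition \ref{4.3} to conclude $M\cong\textup{Hom}_A(T,\widetilde M)$. The only cosmetic difference is that you justify $\widetilde M\in\mathcal{T}(T)$ directly from the absence of regular-to-preprojective morphisms and the AR formula, whereas the paper simply invokes Theorem \ref{2.3}(a); both are fine.
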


\begin{proof}
Observe that by Theorem \ref{7.1} it suffices to consider the case when $M$ does not belong to the transjective component of mod$\,B$.  In this case, $B$ is of infinite representation type.   

Because $B$ is cluster-concealed there exists a hereditary algebra $A$, and  a preprojective tilting $A$-module $T$ such that $B\cong \text{End}_{\mathcal{C}_A}(T) $. Observe that $A$ is of infinite representation type.  Let $C=\text{End}_A (T)$ be the corresponding tilted algebra. Then $B$ is the relation extension of $C$, thus  $B=C\ltimes\text E$, where $E=\text{Ext}^2_C(DC,C)$.  Let $\mathcal{R}_A$ be the set of all regular $A$-modules. It is nonempty because $A$ is of infinite representation type.  It follows from Theorem \ref{2.3}(a) that $\mathcal{R}_A\in\mathcal{T}(T)$.  According to Theorem \ref{2.3}(b) the set of regular $C$-modules $\mathcal{R}_C$ is obtained from $\mathcal{R}_A$ by applying the functor $\text{Hom}_A(T,-)$.  Also, because of Theorem \ref{2.5}, the set of regular $B$-modules $\mathcal{R}_B$ is obtained from $\mathcal{R}_A$ by applying $\text{Hom}_{\mathcal{C}_A}(T,-)$.  Hence, given $M\in\mathcal{R}_B$ there exists $\widetilde{M}\in \mathcal{R}_A$ such that $\text{Hom}_{\mathcal{C}_A}(T,\widetilde{M})\cong M$.  Because $\mathcal{R}_A\in\mathcal{T}(T)$, Theorem \ref{9.1} implies that ${M}\cong \text{Hom}_A(T,\widetilde{M})\otimes _C B$.  Hence, $M$ is induced from the indecomposable $C$-module $\text{Hom}_A(T,\widetilde{M})$, and now it remains to show that $M$ is actually an indecomposable $C$-module itself, that is $M\cong \text{Hom}_A(T,\widetilde{M})$. It follows from Theorem \ref{2.3}(c) that id$_C \text{Hom}_A(T,\widetilde{M})\leq 1$, so Proposition \ref{4.3}(a) implies that 
$$M\cong \text{Hom}_A(T,\widetilde{M})\otimes_C B \cong \text{Hom}_A(T,\widetilde{M}).$$ 
This shows that $M$ is an indecomposable $C$-module.  
\end{proof}

The next result describes a situation when all modules over a cluster-tilted algebra of tame type are induced or coinduced from some tilted algebra.  

\begin{thm}\label{7.3}
Let $B$ be a tame cluster-tilted algebra.  Let $\mathcal{S}_B$ be a tube in {\upshape mod$\,B$} and let 
$$P_B(1), P_B(2), \dots , P_B(\ell)$$
denote all distinct indecomposable projective $B$-modules belonging to $\mathcal{S}_B$.  If 
{\upshape $$\text{Hom}_B(P_B(i),P_B(j))=0 \text{ for all }i\not = j \text{ and } 1\leq i,j\leq \ell$$}then every module in $\mathcal{S}_B$ is induced or coinduced from the same tilted algebra $C$. 
\end{thm}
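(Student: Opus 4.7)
The plan is to transfer the problem to the cluster category $\mathcal{C}_A$ (with $A$ tame hereditary and $B=\textup{End}_{\mathcal{C}_A}T$), choose a tilting $A$-module $T$ via a suitable local slice in $\mathcal{C}_A$, set $C=\textup{End}_A T$, and then apply Theorems \ref{9.1} and \ref{9.2}. Under the BMR equivalence of Theorem \ref{2.5}, the tube $\mathcal{S}_B$ corresponds to a stable tube $\mathcal{S}_A$ in $\mathcal{C}_A$, which coincides with a tube in $\textup{mod}\,A$ since tubes are $\tau$-periodic and $F$-invariant. Let $T_1,\dots,T_\ell$ be the summands of $T$ lying in $\mathcal{S}_A$, so that $P_B(i)=\textup{Hom}_{\mathcal{C}_A}(T,T_i)$, and let $\widetilde M\in\mathcal{S}_A\setminus\textup{add}(\tau_{\mathcal{C}_A}T)$ be the lift of an indecomposable $M\in\mathcal{S}_B$.

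First I would translate the hypothesis. The BMR equivalence gives $\textup{Hom}_B(P_B(i),P_B(j))\cong\textup{Hom}_{\mathcal{C}_A}(T_i,T_j)$, which decomposes via the orbit formula as $\textup{Hom}_A(T_i,T_j)\oplus D\,\textup{Hom}_A(T_j,\tau_A^2 T_i)$. In particular the vanishing hypothesis forces $\textup{Hom}_A(T_i,T_j)=0$ for all $i\neq j$: the tube summands of $T$ are pairwise Hom-orthogonal as $A$-modules.

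Next I would establish the dichotomy that every $\widetilde M\in\mathcal{S}_A\setminus\textup{add}(\tau_{\mathcal{C}_A}T)$ satisfies either $\widetilde M\in\mathcal{T}(T)$ or $\tau_A^{-1}\widetilde M\in\mathcal{F}(T)$. In the first case, Theorem \ref{9.1} realizes $M$ as the induced module $\textup{Hom}_A(T,\widetilde M)\otimes_C B$. In the second case, using the identification $\textup{Hom}_{\mathcal{C}_A}(T,\widetilde M)\cong\textup{Ext}^1_{\mathcal{C}_A}(T,\tau_A^{-1}\widetilde M)$—valid because $[1]=\tau_{\mathcal{C}_A}$ in the cluster category and $\tau_{\mathcal{C}_A}$ restricts to $\tau_A$ on the stable tube—Theorem \ref{9.2} realizes $M$ as the coinduced module $D(B\otimes_C D\,\textup{Ext}^1_A(T,\tau_A^{-1}\widetilde M))$. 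To prove the dichotomy by contradiction, assume both fail: then $\textup{Ext}^1_A(T_i,\widetilde M)\neq 0$ for some summand $T_i$ (necessarily in $\mathcal{S}_A$, since other summands contribute zero), so by AR duality $\widetilde M$ is a predecessor of $\tau T_i$ in the tube; and $\textup{Hom}_A(T_j,\tau_A^{-1}\widetilde M)\neq 0$ for some summand $T_j$. When $T_j\in\mathcal{S}_A$, the composition of the two morphisms through $\widetilde M$ in the stable tube is nonzero by standard tube combinatorics (passing to the universal cover, morphisms along rays compose non-trivially), and after $\tau$-shift yields $\textup{Hom}_A(T_j,T_i)\neq 0$, contradicting Hom-orthogonality if $i\neq j$ and forcing $\widetilde M=\tau T_i$ (excluded) if $i=j$.

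The main obstacle is the residual case where $T_j$ is a \emph{preprojective} summand of $T$, since preprojective modules do admit nonzero morphisms into regular modules. Here I would exploit the cluster-tilting rigidity $\textup{Ext}^1_{\mathcal{C}_A}(T_j,T_i)=0$, which via the orbit formula gives the vanishing $\textup{Hom}_A(T_j,\tau T_i)=0$, and combine this with the controlled pattern of preprojective-to-regular morphisms in tame type. Concretely, one can mutate the preprojective part of $T$ (that is, change the local slice defining $C$) to shift its transjective summands far enough from $\mathcal{S}_A$ that no preprojective summand admits a nonzero map into the finite set of regulars $\tau_A^{-1}\widetilde M$ where the in-tube part of the dichotomy first fails. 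Executing this choice-of-slice argument, tailored to the tame hypothesis and the Hom-orthogonality of the $T_i$, is the technical heart of the proof and the step where I expect the most care to be required.
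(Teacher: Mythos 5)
Your translation of the hypothesis to pairwise Hom-orthogonality $\textup{Hom}_A(T_i,T_j)=0$ of the tube summands of $T$ is correct, and the consequence that these summands lie on the mouth of $\mathcal{S}_A$ matches the paper's first step. However, the central dichotomy you propose --- that every $\widetilde M\in\mathcal{S}_A\setminus\textup{add}(\tau_{\mathcal{C}_A}T)$ satisfies $\widetilde M\in\mathcal{T}(T)$ or $\tau_A^{-1}\widetilde M\in\mathcal{F}(T)$ --- is false, and the gap you flag at the end is not repairable along the lines you sketch. The condition $\tau_A^{-1}\widetilde M\in\mathcal{F}(T)$ requires $\textup{Hom}_A(T_j,\tau_A^{-1}\widetilde M)=0$ for \emph{every} summand $T_j$ of $T$, including the preprojective ones, and for a preprojective $T_j$ and a regular target there is in general a nonzero morphism. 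In fact for tame hereditary $A$ with $T_j$ preprojective and $N$ regular one has $\textup{Ext}^1_A(T_j,N)\cong D\textup{Hom}_A(N,\tau T_j)=0$, so $\dim\textup{Hom}_A(T_j,N)=\langle\underline{\dim}\,T_j,\underline{\dim}\,N\rangle$, and this does not vanish just by choosing $T_j$ further along the preprojective component; ``mutating the slice'' cannot make these Hom-spaces disappear. So for $\widetilde M$ lying on one of the $\ell$ corays ending at a $\tau T_i$, typically \emph{both} conditions of your dichotomy fail, and your argument cannot handle these modules. In other words, you are attempting to realize the modules missed by induction as the $\textup{Ext}^1$-images of $\mathcal{F}(T)$ under Theorem~\ref{9.2}, but $\mathcal{F}(T)\cap\mathcal{S}_A$ is far too small for that.

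The paper's route is genuinely different at exactly this point. After observing (as you do) that the $r-\ell$ corays $\mathcal{S}_A\cap\mathcal{T}(T)$ yield, via induction, $r-\ell$ of the corays of $\mathcal{S}_B$, it does \emph{not} try to reach the remaining $\ell$ corays from $\mathcal{F}(T)$. Instead it proves that the missing corays in $\mathcal{S}_B$ (those ending in $\tau_B\,\textup{rad}\,P_B(i)$) coincide with the $\tau_C$-translates of corays in $\mathcal{S}_C$: the key computations are $I_C(i)=S(i)$ simple (from Hom-orthogonality), the short exact sequence $0\to I_C(i)\to I_B(i)\to\tau_C\Omega_C I_C(i)\to 0$ from Proposition~\ref{3.6}(b), and $\tau_B(M\otimes_C B)=\tau_C M$ because coinduction is the identity on $\mathcal{S}_C\subset\mathcal{Y}(T)$ (Proposition~\ref{4.3}). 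Since $\mathcal{Y}(T)$ is closed under predecessors, the modules on the missing corays have $\textup{pd}_C\le 1$, hence are (trivially) coinduced from themselves. This is the ``other half'' of Corollary~6.4 --- the $\mathcal{Y}(T)$ branch, not the $\mathcal{X}(T)$ branch --- and it is the mechanism your proposal is missing. You would need to replace your $\mathcal{F}(T)$-dichotomy with the observation that the modules not covered by induction are already $C$-modules in $\mathcal{Y}(T)$, at which point coinduction is the identity and there is nothing more to prove.
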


\begin{proof}
Let $A$ be a hereditary algebra and $T$ a tilting $A$-module such that $B=\textup{End}_{\mathcal{C}_A} T$. Thus $A$ is tame, and we can suppose without loss of generality that $T$ has no preinjective summands. The regular components of $\textup{mod}\,A$ form a family of pairwise orthogonal tubes. 
Let $\mathcal{S}_A$ be the tube in $\textup{mod}\,A$ whose image under $\text{Hom}_{\mathcal{C}_A}(T,-)$ is the tube $\mathcal{S}_B$ in the statement of the theorem. Let $r$ denote the rank of the tube $\mathcal{S}_A$. Let $\mathcal{S}_C$ denote  the connected component in $\textup{mod}\,C$ given as the image of $\mathcal{S}_A$ under the functor $\text{Hom}_A(T,-)$. 
Thus $\mathcal{S}_C$ lies inside $\mathcal{Y}(T)$, which implies that each module in $\mathcal{S}_C$ has projective dimension at most one, by Proposition \ref{2.2}, and therefore the coinduction functor is the identity on $\mathcal{S}_C$, by Proposition \ref{4.3}.

 Theorem \ref{9.1} yields the following commutative diagram.

{\upshape $$\xymatrix@C100pt@R25pt{\mathcal{S}_A\cap\mathcal{T}(T)\ar[r]^{\text{Hom}_A(T,-)}\ar[rd]_{\text{Hom}_{\mathcal{C}_A}(T,-)}&\mathcal{S}_C\ar[d]^{-\otimes _C B}\\ &\quad\mathcal{S}_C\otimes_C B \subset \mathcal{S}_B.}$$}

Let $T_1,T_2,\ldots,T_\ell$ be the indecomposable summands of $T$ that lie inside $\mathcal{S}_A$, such that $P_B(i)=\text{Hom}_{\mathcal{C}_A}(T,T_i)$, for $i=1,2,\ldots \ell$, are the indecomposable projective modules in $\mathcal{S}_B$. Because of our assumption $\text{Hom}_A(T_i,T_j)=0$, if $i\ne j$, we have that each $T_i$ lies on the mouth of $\mathcal{S}_A$, see for example \cite[Proposition XII.2.1]{SimSko3}.

The local configuration  in the Auslander-Reiten quiver of $\mathcal{S}_A$ is the following.

$$\xymatrix@!@C=15pt@R=15pt{\cdots\ar[rd]&&\tau_A^2 T_i\ar[dr]&&\tau_A T_i\ar[rd] &&T_i\ar[dr]&&\cdots\\
&\cdots \ar[ur]\ar[dr]&&\bullet\ar[ru] \ar[dr]&&\bullet\ar[ur]\ar[dr]&&\cdots\ar[ru]\\
&&\cdots \ar[ur]&&\bullet %\text{Hom}_{\mathcal{C}_A}(T,U)
\ar[ur]&&\cdots\ar[ru]}$$

The corresponding local configuration  in the Auslander-Reiten quiver of $\mathcal{S}_B$ is the following.

$$\xymatrix@!@C=-30pt@R=-30pt{\cdots\ar[rd]&&I_B(i)\ar[dr]&&&&P_B(i)\ar[dr]&&\cdots\\
&\cdots \ar[ur]\ar[dr]&&\tau_B\,\text{rad}\,P_B(i) \ar[dr]&&\text{rad}\,P_B(i)\ar[ur]\ar[dr]&&\cdots\ar[ru]\\
&&\cdots \ar[ur]&&\bullet %\text{Hom}_{\mathcal{C}_A}(T,U)
\ar[ur]&&\cdots\ar[ru]}$$

Since $\text{Ext}^1_A(T_i,-)\cong D\text{Hom}_A(-,\tau T_i)$, and $\tau T_i$ lies on the mouth of $\mathcal{S}_A$, 
we see that ${\mathcal{S}_A\cap \mathcal{T}(T)}$ consists of the $r-\ell$ corays not ending in one of the $\tau T_i$, $i=1,2,\ldots,\ell$.  Therefore $\mathcal{S}_C\otimes_C B$ has $r-\ell $ corays, because of our commutative diagram. So these $r-\ell$ corays are in the image of the induction functor $-\otimes_C B$.

Moreover the $\ell$ corays in $\mathcal{S}_B$ that are not in the image of the induction functor are precisely those ending in $\tau_B\,\textup{rad}\,P_B(i)$, $i=1,2,\ldots,\ell.$ We will show that these corays are equal to the corays in $\mathcal{S}_C$ ending in $\tau_C\,\text{rad} \,P_C(i)$, which implies that these corays are coinduced.

Since $I_B(i)=\text{Hom}_{\mathcal{C}_A}(T,\tau^2_AT_i)$ we have 
\begin{equation}
 \label{eq1}
 \tau_B\,\text{rad}\, P_B(i) = I_B(i)/S(i).
\end{equation}
By our assumption on $T$, we have $\text{Hom}_A(T_i,T)=\text{Hom}_A(T_i,T_i)$ and thus $I_C(i)=S(i)$ is simple.
Moreover, by Proposition \ref{3.6}(b), there is a short exact sequence in $\textup{mod}\,B$ of the form
\[ \xymatrix{0\ar[r]&I_C(i)\ar[r]&I_B(i)\ar[r]&\tau_C\Omega_CI_C(i)\ar[r]&0,}\]
and thus

\begin{equation}
 \label{eq2}
 I_B(i)/S(i)\cong \tau_C\Omega_CI_C(i).
\end{equation}
Again using that $I_C(i)$ is simple, we see that 

\begin{equation}
 \label{eq3}
 \Omega_CI_C(i) =\text{rad}\,P_C(i).
\end{equation}
Combining equations (\ref{eq1})-(\ref{eq3}) yields

\[\tau_B\,\text{rad}\, P_B(i) =  \tau_C \, \text{rad} \,P_C(i).\]

Finally, let $M$ be any indecomposable $C$-module on the coray in $\mathcal{S}_C$ ending in $P_C(i)$, but $M\ne P_C(i)$.
Then $M\otimes_C B$ lies on the coray in $\mathcal{S}_B$ ending in $P_B(i)$. Lemma \ref{3.8} implies that 
\[\tau_B (M\otimes_C B)= D(B\otimes_C D(\tau_C M)) =\tau_C M,\]
where the last identity holds because the coinduction functor is the identity on the tube $\mathcal{S}_C$.
This shows that the $\tau_B$-translate of the coray in $\mathcal{S}_B$ ending in $P_B(i)$ is equal to the coray in $\mathcal{S}_C$ ending in $\tau_C\,\text{rad}\,P_C(i)$.
In particular the modules on this coray are coinduced from $C$.

  This finishes the proof of the theorem. 
\end{proof} 

\begin{remark}
 There are cluster-tilted algebras $B$ with indecomposable modules that are not induced and not coinduced from any tilted algebra.  We give an example below. 
\end{remark}

\begin{exmp}\label{ex 7.7}
Let $B$ be a cluster-tilted algebra of type $\widetilde{\mathbb{A}}_{(4,1)}$ given by the following quiver with relations. 
$$\xymatrix{1\ar@<.5ex>[rr]\ar@<-.5ex>[rr]_{\beta}&&5\ar[dl]^{\epsilon}&&\alpha\beta=\beta\epsilon=\epsilon\alpha=0\\&3\ar[ul]^{\alpha}\ar[dr]^{\delta}&&&\gamma\delta=\delta\sigma=\sigma\gamma=0.\\2\ar[ur]^{\gamma}&&4\ar[ll]_{\sigma}}$$

Then there are exactly two tilted algebras $C$ and $C'$, both of infinite representation type, whose relation extension is $B$.  $C$ and $C'$ can be represented by the following quivers with relations. 

$$\xymatrix{
&1\ar@<.5ex>[rr]\ar@<-.5ex>[rr]_{\beta}&&5&\alpha\beta=0&&&
1\ar@<.5ex>[rr]\ar@<-.5ex>[rr]_{\beta}&&5\ar[dl]^{\epsilon}&\beta\epsilon=0\\
C : &&3\ar[dr]^{\delta}\ar[ul]^{\alpha}&&\gamma\delta=0.&&C': &
&3\ar[dr]^{\delta}&&\gamma\delta=0.\\
&2\ar[ur]^{\gamma}&&4&&&&
2\ar[ur]^{\gamma}&&4}$$

There is a tube in $\Gamma(\text{mod}\,B)$ of rank four containing three distinct projective modules. We describe it below and identify the modules that have the same label.  We will show that the four modules emphasized in bold are not induced and not coinduced. 
$$\xymatrix@!@C=-2pt@R=-2pt{{\begin{smallmatrix}4\\2\end{smallmatrix}}\ar[dr]&&&&
{\begin{smallmatrix}2\\3\\1\\5\\3\\4\end{smallmatrix}}\ar[dr]&&&&
{\begin{smallmatrix}4\\2\end{smallmatrix}}\\&
{\begin{smallmatrix}4\end{smallmatrix}}\ar[dr]&&
{\begin{smallmatrix}3\\1\\5\\3\\4\end{smallmatrix}}\ar[dr]\ar[ur]&&
{\begin{smallmatrix}2\\3\\1\\5\\3\end{smallmatrix}}\ar[dr]&&
{\begin{smallmatrix}2\end{smallmatrix}}\ar[ur]&&\\&&
{\begin{smallmatrix}3\\4\;1\\\;\;\;5\\\;\;\;3\\\;\;\;4\end{smallmatrix}}\ar[dr]\ar[ur]&&
{\begin{smallmatrix}\bf{3}\\\bf{1}\\\bf{5}\\\bf{3}\end{smallmatrix}}\ar[dr]\ar[ur]&&
{\begin{smallmatrix}2\;\;\;\\3\;\;\;\\1\;\;\;\\5\;2\\3\end{smallmatrix}}\ar[dr]\ar[ur]\\&
{\begin{smallmatrix}1\\5\\3\\4\end{smallmatrix}}\ar[dr]\ar[ur]&&
{\begin{smallmatrix}\bf{3}\\\bf{4}\;\bf{1}\\\;\;\;\bf{5}\\\;\;\;\bf{3}\end{smallmatrix}}\ar[dr]\ar[ur]&&
{\begin{smallmatrix}\bf{3}\;\;\;\\\bf{1}\;\;\;\\\bf{5}\;\bf{2}\\\bf{3}\end{smallmatrix}}\ar[dr]\ar[ur]&&
{\begin{smallmatrix}2\\3\\1\\5\end{smallmatrix}}\ar[dr]&&\\
{\begin{smallmatrix}2\;\;\;\\3\;\;\;\\1\;1\\\;5\;5\\\;\;\;\;3\\\;\;\;\;4\end{smallmatrix}}\ar[dr]\ar[ur]&&
{\begin{smallmatrix}1\\5\\3\end{smallmatrix}}\ar[dr]\ar[ur]&&
{\begin{smallmatrix}\bf{3}\;\;\;\;\\\bf{4}\;\bf{1}\;\;\\\;\;\;\;\bf{5}\;\bf{2}\\\;\;\;\;\;\bf{3}\end{smallmatrix}}\ar[dr]\ar[ur]&&
{\begin{smallmatrix}3\\1\\5\end{smallmatrix}}\ar[dr]\ar[ur]&&
{\begin{smallmatrix}2\;\;\;\\3\;\;\;\\1\;1\\\;5\;5\\\;\;\;\;3\\\;\;\;\;4\end{smallmatrix}}\\&
{\begin{smallmatrix}2\;\;\;\\3\;\;\;\\1\;1\\\;5\;5\\\;\;\;\;3\end{smallmatrix}}\ar[ur]\ar[dr]&&
{\begin{smallmatrix}1\;\;\;\\5\;2\\3\end{smallmatrix}}\ar[dr]\ar[ur]&&
{\begin{smallmatrix}3\\4\;1\\\;\;\;5\end{smallmatrix}}\ar[dr]\ar[ur]&&
{\begin{smallmatrix}3\;\;\;\\1\;1\\\;5\;5\\\;\;\;3\\\;\;\;4\end{smallmatrix}}\ar[dr]\ar[ur]\\
\dots\ar[ur]&&\dots\ar[ur]&&\dots\ar[ur]&&
\dots\ar[ur]&&
\dots}$$

The corresponding skew tube in $\Gamma(\text{mod}\,C)$ is given below.  Observe that it consists of one coray ending in the projective at vertex 2, labeled $\begin{smallmatrix}2\\3\\1\\5\end{smallmatrix}$, and four rays starting in ${\begin{smallmatrix}2\\3\\1\\5\end{smallmatrix}}$,${\begin{smallmatrix}3\\1\\5\end{smallmatrix}}$,${\begin{smallmatrix} 4 \end{smallmatrix}}$ and ${\begin{smallmatrix}1\\5\end{smallmatrix}}$.
Let $E=\tau^{-1}\Omega^{-1}C$ and $DE=\tau\Omega DC$, then we make the following computation. 
$$\xymatrix{E:  {\begin{smallmatrix}3\\4\end{smallmatrix}}\oplus {\begin{smallmatrix}3\\4\end{smallmatrix}}\oplus {\begin{smallmatrix}3\\4\end{smallmatrix}}\oplus {\begin{smallmatrix}2\end{smallmatrix}}\oplus{\begin{smallmatrix}3\\4\end{smallmatrix}}&&
DE={\begin{smallmatrix}0\end{smallmatrix}}\oplus{\begin{smallmatrix}4\end{smallmatrix}}\oplus{\begin{smallmatrix}2\\3\\1\\5\end{smallmatrix}}\oplus{\begin{smallmatrix}2\\3\\1\\5\end{smallmatrix}}\oplus{\begin{smallmatrix}0\end{smallmatrix}}}$$

$$\xymatrix@!@C=5pt @R=0pt{
&&&&&&&{\begin{smallmatrix}3\\4\;1\\\;\;\;5\end{smallmatrix}}\ar[dr]&&\dots
\\&&&&
{\begin{smallmatrix}2\\3\\1\\5\end{smallmatrix}}\ar[dr]&&
{\begin{smallmatrix}1\\5\end{smallmatrix}}\ar[ur]\ar[dr]&&
{\begin{smallmatrix}\;3\;\;\;\;\\4\;1\;1\\\;\;\;\;5\;5\end{smallmatrix}}\ar[ur]\ar[dr]\\
&{\begin{smallmatrix}4\end{smallmatrix}}\ar[dr]&&
{\begin{smallmatrix}3\\1\\5\end{smallmatrix}}\ar[ur]\ar[dr]&&
{\begin{smallmatrix}2\;\;\;\;\\3\;\;\;\;\\\;1\;1\\\;\;5\;5\end{smallmatrix}}\ar[ur]\ar[dr]&&
{\begin{smallmatrix}1\;1\\\;\;5\;5\end{smallmatrix}}\ar[ur]\ar[dr]&&
\dots\\&&
{\begin{smallmatrix}3\\4\;1\\\;\;\;5\end{smallmatrix}}\ar[ur]\ar[dr]&&
{\begin{smallmatrix}3\;\;\;\\1\;1\\\;\;5\;5\end{smallmatrix}}\ar[ur]\ar[dr]&&
{\begin{smallmatrix}2\;\;\;\;\;\;\\3\;\;\;\;\;\;\\\;1\;1\;1\\\;\;\;5\;5\;5\end{smallmatrix}}\ar[ur]\ar[dr]&&
\dots\ar[ur]\\&
{\begin{smallmatrix}1\\5\end{smallmatrix}}\ar[ur]\ar[dr]&&
{\begin{smallmatrix}\;3\;\;\;\;\\4\;1\;1\\\;\;\;\;5\;5\end{smallmatrix}}\ar[ur]\ar[dr]&&
{\begin{smallmatrix}3\;\;\;\;\;\;\\1\;1\;1\\\;\;5\;5\;5\end{smallmatrix}}\ar[ur]\ar[dr]&&
\dots\ar[ur]\\
{\begin{smallmatrix}2\;\;\;\;\\3\;\;\;\;\\\;1\;1\\\;\;5\;5\end{smallmatrix}}\ar[ur]\ar[dr]&&
{\begin{smallmatrix}1\;1\\\;\;5\;5\end{smallmatrix}}\ar[ur]\ar[dr]&&
{\begin{smallmatrix}\;\;3\;\;\;\;\;\;\\4\;1\;1\;1\\\;\;\;\;\;5\;5\;5\end{smallmatrix}}\ar[ur]\ar[dr]&&
\dots\ar[ur]\\&
\dots\ar[ur]&&
\dots\ar[ur]&&
\dots\ar[ur]
}$$

Let $\mathcal{O}_{C}$ denote the coray in $\Gamma(\text{mod}\,C)$ ending in the projective at vertex 2.  Observe that if we induce the modules in $\mathcal{O}_C$, that is apply $D\text{Hom}_C(-,DB)$, then we obtain a coray in the tube of $\Gamma(\text{mod}\,B)$ that ends in the projective $B$-module at vertex 2.  Recall that there is an isomorphism of $C$-modules $D\text{Hom}_C(M,DB)\cong M\oplus D\text{Hom}_C(M,DE)$ for every $M\in\text{mod}\,C$.  On the other hand coinduction $\text{Hom}_C(B,-)$ of modules in $\mathcal{O}_C$ will act as the identity map, because there is a $C$-module isomorphism $\text{Hom}_C(B,M)\cong M\oplus \text{Hom}_C(E,M)$ and the last summand is zero for all $M\in\mathcal{O}_{C}$. Therefore, we see the coray $\mathcal{O}_{C}$ appearing in $\Gamma(\text{mod}\,B)$.  Finally, the only module in the tube above that does not belong to $\mathcal{O}_{C}$ is the simple at 4.  Observe, that this module is also a projective $C$-module, so inducing it we obtain the corresponding projective $B$-module $\begin{smallmatrix}4\\2 \end{smallmatrix}$.

Similarly, in $\Gamma(\text{mod}\,C')$ there is a skew tube consisting of four corays and one ray starting in ${\begin{smallmatrix}1\\5\\3\\4\end{smallmatrix}}$, the injective $C'$-module at vertex 4.  Denote this ray by $\mathcal{R}_{C'}$.  Analogous calculations yield that the ray in $\Gamma(\text{mod}\,B)$ starting in ${\begin{smallmatrix}2\\3\\1\\5\\3\\4\end{smallmatrix}}$ the injective $B$-module, is coinduced from $\mathcal{R}_{C'}$.  But the induction of $\mathcal{R}_{C'}$ acts as the identity map, so we see this exact same ray appearing in $\Gamma(\text{mod}\,B)$.  Observe that every module on this ray is indeed an indecomposable $C'$-module.  Finally, the simple injective $C'$-module at vertex 2, belongs to this skew tube, and its induction is again the simple module supported at vertex 2. 

In particular we observe that the $B$-modules which make up the mesh emphasized in bold are not induced and not coinduced from any tilted algebra.  Moreover, as one goes down the tube in $\Gamma(\text{mod}\,B)$ there will appear infinitely many such meshes, where no module is induced or coinduced.  
\end{exmp}
%
%\begin{cor}
%Under the assumptions of Theorem \ref{7.3}, every module over a cluster concealed algebra $B$ is induced or coinduced from some module over a tilted algebra, whose relation extension is $B$.
%\end{cor}
%\begin{proof}
%The proof follows directly from Theorems \ref{7.3} and \ref{7.1}. 
%\end{proof}
%

\bibliographystyle{plain}

\end{document}